\newcommand{\Dchaintwo}[4]{
\rule[-3\unitlength]{0pt}{8\unitlength}
\begin{picture}(14,5)(0,3)
\put(1,2){\ifthenelse{\equal{#1}{l}}{\circle*{2}}{\circle{2}}}
\put(2,2){\line(1,0){10}}
\put(13,2){\ifthenelse{\equal{#1}{r}}{\circle*{2}}{\circle{2}}}
\put(1,5){\makebox[0pt]{\scriptsize #2}}
\put(7,4){\makebox[0pt]{\scriptsize #3}}
\put(13,5){\makebox[0pt]{\scriptsize #4}}
\end{picture}}
\newlength{\mpb}
\newcommand\ord{{\operatorname{ord}}}
\newcommand{\BV}{\mathfrak{B}(V)}
\newcommand\YDG{^{\Gamma}_{\Gamma}\mathcal{YD}}
\newcommand\Z{\mathbb{Z}}
\newcommand\Ly{\mathcal{L}}
\newcommand\SupW{[\Ly]^{(\N)}}
\newcommand\SupWL{[L]^{(\N)}}
\newcommand\N{\mathbb{N}}
\renewcommand{\k}{\Bbbk}
\newcommand\G{\Gamma}
\newcommand\Gh{\widehat{\Gamma}}
\renewcommand\o{\otimes}       
\newcommand{\la}{\langle}      \newcommand{\ra}{\rangle}
\newcommand{\red}[1]{c_{ {#1}}}     
\newcommand{\redh}[1]{d_{{#1}}} 
\newcommand\kX{\k\la X\ra}
\newcommand\X{\la X\ra}
\newcommand\Sh[3]{\mathrm{Sh}(#1)=(#2|#3)}
\newcommand\nSh[3]{\operatorname{Sh}(#1)\neq(#2|#3)}
\numberwithin{equation}{section}
\theoremstyle{plain}
\newtheorem{thm}{Theorem}[section]
\newtheorem{lem}[thm]{Lemma}
\newtheorem{prop}[thm]{Proposition}
\newtheorem{conj}[thm]{Conjecture}
\theoremstyle{definition} 
\newtheorem{defn}[thm]{Definition}
\newtheorem{rem}[thm]{Remark}
\newtheorem{exs}[thm]{Examples}
\newtheorem{ex}[thm]{Example}
\title{On the lifting of Nichols algebras\footnote{This work is part of the author's PhD thesis written under the supervision of Professor H.-J.~Schneider.}}
\author{Michael Helbig\footnote{eMail: \texttt{michael@helbig123.de}}}
\date{\today}
\begin{document}

\maketitle


\begin{abstract}
\noindent
Nichols algebras are a fundamental building block of pointed Hopf algebras. Part of the classification program of finite-dimensional pointed Hopf algebras with the lifting method of Andruskiewitsch and Schneider  is the determination of the liftings, i.e., all possible deformations of a given Nichols algebra. Based on recent work of Heckenberger about  Nichols algebras of diagonal type  we compute explicitly the liftings of
\begin{itemize}
\item all Nichols algebras with Cartan matrix of type $A_2$, 
\item some Nichols algebras with Cartan matrix of type $B_2$, and 
\item some Nichols algebras of two Weyl equivalence classes of non-standard type 
\end{itemize}
giving  new classes of finite-dimensional pointed Hopf algebras.
\newline

\noindent
\textsc{Key Words:} Hopf algebra, Nichols algebra, quantum group, lifting
\end{abstract}



\section*{Introduction}
At the moment the most promissing general method for the classification of finite-dimen\-sional pointed Hopf algebras is the lifting method developed by Andruskiewitsch and Schneider \cite{AS-p3}: Given a finite-dimensional pointed Hopf algebra $A$ with coradical $A_0=\k[\G]$ and abelian group of group-like elements $\G=G(A)$. Then we can decompose its associated graded Hopf algebra into a smash product $\operatorname{gr}(A)\cong B\# \k[\G]$ where $B$ is a braided Hopf algebra. The subalgebra of $B$ generated by its primitive elements $V:=P(B)$ is a Nichols algebra $\BV$. Now the classification is carried out in three steps:
\begin{enumerate}
\item[(1)]  Show that $B=\BV$.
\item[(2)]  Determine the structure of $\BV$.
\item[(3)]  \emph{Lifting}: Determine the liftings of $\BV$, i.e., all  Hopf algebras $A$ such that $\operatorname{gr}(A)\cong \BV \# \k[\G]$.
\end{enumerate}
Many  classification results in special situations were obtained in this way \cite{AS-p3,AS-A2,AS-CartMatr,Grana-p5,Grana32}. The most impressive result obtained by this method by Andruskiewitsch und Schneider \cite{AS-Class} is the classification of all finite-dimensional pointed Hopf algebras where the prime divisors of the order of the abelian group $\G$ are $>7$. In this case the diagonal braiding of $V$ is of Cartan type and the Hopf algebras are generalized versions of small quantum groups. The classification when the braiding is not of Cartan type or the divisors of the order of $\G$ are $\le 7$ is still an open problem. Also the case where $\G$ is not abelian is widely open and of different nature, e.g., the defining relations have another form  \cite{heckSchneidNA,andHeckSchneidSSYD}.

Concerning (2),  Heckenberger recently showed that Nichols algebras of diagonal type have a close connection to semi-simple Lie algebras, namely he introduces a Weyl groupoid \cite{HeckWeylGroupoid}, Weyl equivalence \cite{HeckWeylEquiv} and an arithmetic root system \cite{HeckArtihRootSystRank2,HeckClassArtihRootSystRank3}  for Nichols algebras. With the help of these concepts he classifies the diagonal braidings of $V$ such that the Nichols algebra $\BV$ has a finite set of PBW generators \cite{HeckClassArtihRootSyst}. Moreover, he determines the structure of all rank two Nichols algebras in terms of generators and relations \cite{HeckBV1}.

This is the starting point of our work which addresses to step (3) of the program, namely the \emph{lifting} in the cases not treated in \cite{AS-Class}. As said before, we lift Nichols algebras of diagonal type with Cartan matrix of type $A_2$ in  Theorem \ref{ExLiftingsA2}, with Cartan matrix of type $B_2$ in Theorem \ref{ExLiftingsB2}, and of non-standard type in Theorem  \ref{ThmLiftRow8}: When lifting arbitrary diagonal Nichols algebras, new phenomena occur: In the setting of \cite{AS-Class} there are only three types of defining relations, namely the Serre relations, 
the linking relations and the root vector relations. The algebraic structure in the general setting is more complicated: Firstly, the Serre relations do not play the outstanding role. Other relations are needed and sometimes the Serre relations are redundant; we give a complete answer for the Serre relations in Lemma \ref{LemLiftofGeneralRel}. Secondly, in general the lifted relations from the Nichols algebra do not remain in the group algebra.

The paper is organized as follows: In Section 1 we recall the basic notions of Nichols algebras of diagonal type, taking into account the recent developement by Heckenberger. 
Then in Section 2  the  general calculus for $q$-commutators in an arbitrary algebra of \cite{Helbig-PhD, Helbig-PBW} is presented. Section \ref{SectLyndW} contains the theory of Lyndon words, super letters and super words.
Crucial for the lifting is the knowledge of a ``good'' presentation of the Nichols algebra and its liftings, in terms  of generators and (non-redundant) relations: Section \ref{SectCharHA} repeats the result  of  \cite{Helbig-Presentation}; further some new formulas for coproducts are developed. We then formulate the main results about the liftings in Section 5. We explain our method in Section \ref{SectGeneralLiftProced}.

Throughout the paper $\k$ will be a field, all vector spaces will be over $\k$,  and all tensor products are taken over $\k$.

\section{Nichols algebras}

We want to define  Nichols algebras in the category $\YDG$ of Yetter-Drinfel'd modules over an abelian group $\G$, not necessarily finite. Our main reference is the survey article \cite[Sect.~1,2]{AS-Pointed}.

\subsection{Yetter-Drinfel'd modules of diagonal type} \label{SectYetterDrinfeldAbGr}

 The category $\YDG$ of (left-left) Yetter-Drinfel'd modules over the Hopf algebra $\k[\G]$ is the category of left $\k[\G]$-modules which are $\G$-graded vector spaces $V=\bigoplus_{g\in\G}V_g$ such that each $V_g$ is stable under the action of $\G$, i.e., 
$
h\cdot v\in V_g\quad\text{ for all }\quad h\in\G, v\in V_g.
$
The $\G$-grading is equivalent to a left $\k[\G]$-comodule structure $\delta: V\rightarrow \k[\G]\o V$: One can define $\delta$ or the other way round $V_g$ by the equivalence $\delta(v)=g\o v$ $\Longleftrightarrow$ 
$v\in V_g$ for all $g\in\G$. 
The morphisms of $\YDG$ are the $\G$-linear maps $f:V\rightarrow W$ with $f(V_g)\subset W_g$ for all $g\in \G$.

We consider the following monoidal structure on $\YDG$: If \ $V,W$$\ \in\ $$\YDG$, then also \ \mbox{$V\o W$$\ \in\ $$\YDG$} \ by
$$
g\cdot (v\o w):=(g\cdot v)\o (g\cdot w)\quad\text{and}\quad (V\o W)_g:=\bigoplus_{hk=g} V_h\o W_k
$$
for $v\in V,w\in W$ and  $g\in \G$. The \emph{braiding} in $\YDG$ is the isomorphism
$$
c=c_{V,W}: V\o W\rightarrow W\o V,\quad c(v\o w):=(g\cdot w)\o v
$$
for all $v\in V_g,$ $g\in\G$, $w\in W$. Thus every $V$$\ \in\ $$\YDG$ is a braided vector space $(V,c_{V,V})$.

We have the following important example: For a group $\G$ we denote by $\Gh$ the character group of all group homomorphisms from $\G$ to the multiplicative group $\k^{\times}$.
\begin{defn}
Let $V$$\ \in\ $$\YDG$.  If there  is a basis $x_i$, $i\in I$, of $V$ and $g_i\in\G$, $\chi_i\in \Gh$ for all $i\in I$ such that 
$$
g\cdot x_i =\chi_i(g)x_i\quad\text{and}\quad x_i\in V_{g_i},
$$
then we say $V$ is of \emph{diagonal type}.
\end{defn}
Note that if $\k$ is algebraically closed of characteristic 0 and $\G$ is finite, then any finite-dimensional $V$$\ \in\ $$\YDG$ is of diagonal type.

For the braiding we have $c(x_i\o x_j)=\chi_j(g_i)x_j\o x_i$ for $1\le i,j\le \theta$. Hence the braiding is determined by the so-called \emph{braiding matrix} of $V$
$$
(q_{ij})_{1\le i,j\le\theta}:=(\chi_j(g_i))_{1\le i,j\le\theta}.
$$

\subsection{Nichols algebras of diagonal type}\label{SectNichAlg}
Let $V$$\ \in\ $$\YDG$. $B$ is called a \emph{Nichols algebra} of $V$, if
\begin{itemize}
\item $B=\oplus_{n\ge 0}B(n)$ is a graded braided Hopf algebra in $\YDG$,
\item $B(0)\cong \k$, 
\item $P(B) = B(1)\cong V$,
\item $B$ is generated as an algebra by $B(1)$.
\end{itemize}
Any two Nichols algebras of $V$ are isomorphic, thus we write $\BV$ for ``the'' Nichols algebra of $V$. One can construct the Nichols algebra in the following way: Let $I$ denote the sum of all ideals of $T(V)$ that are generated by homogeneous elements of degree $\ge 2$ and that are also coideals. Then $\BV\cong T(V)/I$. We say the Nichols algebra $\BV$ is of \emph{diagonal type}, if $V$ is of diagonal type.

\subsection{Cartan matrices} A matrix $(a_{ij})_{1\le i,j\le\theta}\in\Z^{\theta\times\theta}$ is called a \emph{generalized Cartan matrix}
if for all $1\le i,j\le\theta$
\begin{itemize}
 \item $a_{ii}=2$,
\item $a_{ij}\le 0$ if $i\neq j$,
\item $a_{ij}=0\Rightarrow a_{ji}=0$.
\end{itemize}
Let $\BV$ be a Nichols algebra of diagonal type. Recall that $V$ resp.~$\BV$ with  braiding matrix $(q_{ij})$ is called of \emph{Cartan type}, if there is a generalized Cartan matrix $(a_{ij})$ such that
$$
q_{ij}q_{ji}=q_{ii}^{a_{ij}}.
$$
Not every Nichols algebra is of Cartan type (see Example \ref{ExBraidmatrix} and Sections \ref{LiftofNichAlgCartMatrA2}, \ref{LiftofNichAlgCartMatrB2}, \ref{LiftofNichAlgNonStandardType}), but still we have the following  \cite[Sect.~3]{HeckWeylGroupoid}: 

If $\BV$ is finite-dimensional, then the matrix $(a_{ij})$ defined for all $1\le i\neq j\le \theta$ by
$$
a_{ii}:=2\quad\text{and}\quad a_{ij}:=-\min \{r\in \mathbb N\ |\ q_{ij}q_{ji}q_{ii}^{r}=1\text{ or }(r+1)_{q_{ii}}=0\}
$$ 
is a generalized Cartan matrix fulfilling
\begin{align}\label{RemAssociateCartMatr}
 q_{ij}q_{ji}=q_{ii}^{a_{ij}}\quad\text{ or }\quad\ord q_{ii}=1-a_{ij}.
\end{align}
We call $(a_{ij})$ the  \emph{Cartan matrix associated to} $\BV$.

\subsection{Weyl equivalence} Heckenberger introduced in \cite[Sect.~2]{HeckWeylEquiv, HeckWeylGroupoid}  the notion of the \emph{Weyl groupoid} and \emph{Weyl equivalence} of Nichols algebras of diagonal type. With the help of these concepts Heckenberger classified in a series of articles \cite{HeckArtihRootSystRank2,HeckClassArtihRootSystRank3,HeckClassArtihRootSyst} all braiding matrices $(q_{ij})$ of diagonal Nichols algebras with a finite set of PBW generators. We are concerned with the list of rank 2 Nichols algebras given in the table \cite[Figure 1]{HeckWeylEquiv,HeckArtihRootSystRank2}.

We want to recall the following: For diagonal $\BV$ with braiding matrix $(q_{ij})$ we associate a \emph{generalized Dynkin diagram}: this is a graph with $\theta$ vertices, where the $i$-th vertex is labeled with $q_{ii}$ for all $1\le i\le\theta$; further, if $q_{ij}q_{ji}\neq 1$, then there is an edge between the $i$-th and $j$-th vertex labeled with $q_{ij}q_{ji}$: Thus, if $q_{ij}q_{ji}= 1$ resp.~$q_{ij}q_{ji}\neq 1$, then we have
\begin{center}\setlength{\unitlength}{1.5mm}
$\ldots$
\rule[-3\unitlength]{0pt}{8\unitlength}
 \begin{picture}(14,5)(0,3)
 \put(1,2){\circle{2}}
 \put(13,2){\circle{2}}
 \put(1,5){\makebox[0pt]{\normalsize$q_{ii}$}}
 \put(13,5){\makebox[0pt]{\normalsize$q_{jj}$}}
 \end{picture}\ \ $\ldots$ \hspace*{1.5cm}resp.\hspace*{1.5cm}
 $\ldots$ \Dchaintwo{}{\normalsize$q_{ii}$}{\normalsize$q_{ij}q_{ji}$}{\normalsize$q_{jj}$}  \ $\ldots$
\end{center}
So two Nichols algebras of the same rank $\theta$ with braiding matrix $(q_{ij})$ resp.~$(q_{ij}')$ 
have the same generalized Dynkin diagram if and only if they are \emph{twist equivalent} \cite[Def.~3.8]{AS-Pointed}, i.e., for all $1\le i,j\le\theta$ 
 $$
q_{ii}=q_{ii}'\quad\text{ and }\quad q_{ij}q_{ji}=q_{ij}'q_{ji}' .
$$

\begin{defn}
 Let  $1\le k\le \theta$  be fixed and $\BV$ finite-dimensional with braiding matrix $(q_{ij})$ and Cartan matrix $(a_{ij})$. 
We call $(q_{ij}^{(k)})$ defined by  
$$
q_{ij}^{(k)}:=q_{ij}  q_{ik}^{-a_{kj}}   q_{kj}^{-a_{ki}}  q_{kk}^{a_{ki}a_{kj}}
$$
the \emph{at the vertex $k$ reflected braiding matrix}.

Two Nichols algebras with braiding matrix $(q_{ij})$ resp. $(q_{ij}')$ are called \emph{Weyl equivalent}, if there are $m\ge 1$, $1\le k_1,\ldots,k_m\le \theta$ such that the generalized Dynkin diagrams w.r.t.~the matrices $\bigl( (\ldots(q_{ij}^{(k_1)})^{(k_2)}\ldots)^{(k_m)} \bigr)$ and $(q_{ij}')$ coincide, i.e., one gets the Dynkin diagram of $(q_{ij}')$ by successive reflections of $(q_{ij})$.
\end{defn}

\setlength{\unitlength}{1mm}
\settowidth{\mpb}{$q_0\in \k^\times \backslash \{-1,1\}$,}
\begin{ex}\label{ExBraidmatrix}
The braiding matrix 
$(q_{ij}):=\left(\begin{smallmatrix}  q & 1 \\ q^{-1}  &-1 \end{smallmatrix}\right)
$ with $q\neq 1$ has the generalized Dynkin diagram \Dchaintwo{}{$q$}{$q^{-1}$}{$-1$}\ and  associated Cartan matrix $(a_{ij})=
\left(\begin{smallmatrix} 2&-1\\-1&2 \end{smallmatrix}\right)
$ of type $A_2$, since $q_{12}q_{21}=q_{11}^{-1}$ and $\ord q_{22}=1-(-1)$. Then the at the vertex $2$ reflected braiding matrix is 
$(q_{ij}'):=(q_{ij}^{(2)})=
\left(\begin{smallmatrix}-1 & -1 \\ -q  &-1 \end{smallmatrix}\right)
$.
Its Dynkin diagram is$\!\!$ \Dchaintwo{}{$-1$}{$q$}{$-1$}\ and the associated Cartan matrix is also of type $A_2$. The two braiding matrices $(q_{ij})$ and $(q_{ij}')$ are by definition Weyl equivalent; they are twist equivalent if and only if $q=-1$. See also \cite[Figure 1]{HeckWeylEquiv,HeckArtihRootSystRank2}: row 3 if $q\neq \pm 1$ and row 2 if $q=-1$.
\end{ex}

\begin{rem}\
\begin{enumerate}
 \item Both twist equivalence and Weyl equivalence are equivalence relations, and twist equivalent Nichols algebras are
Weyl equivalent.
 \item Weyl equivalent Nichols algebras have the same dimension and Gel'fand-Kirillov dimension \cite[Prop.~1]{HeckWeylEquiv}, but can have different associated Cartan matrices. 
If the whole Weyl equivalence class has the same Cartan matrix, then 
the Nichols algebras of this class are called of \emph{standard} type \cite{AAGeneric,angionoStandard}. 
\end{enumerate}
\end{rem}

\begin{exs} Let $\BV$ be of rank 2.  Then two Nichols algebras are Weyl equivalent if and only if their generalized Dynkin diagrams appear in the same row of \cite[Figure 1]{HeckWeylEquiv,HeckArtihRootSystRank2} and can be presented with the same set of fixed parameters \cite{HeckWeylEquiv}.

\begin{enumerate}
 \item $\BV$  is of standard type, if and only if it appears in the rows 1--7, 11 or 12  of \cite[Figure 1]{HeckWeylEquiv,HeckArtihRootSystRank2}. The Cartan matrices are
\begin{itemize}
 \item  $\left(\begin{smallmatrix} 2&0\\0&2 \end{smallmatrix}\right)
$ of type $A_1\times A_1$ of row 1,
\item   $\left(\begin{smallmatrix} 2&-1\\-1&2\end{smallmatrix}\right)
$ of type $A_2$ of rows 2 and 3,
\item   $\left(\begin{smallmatrix} 2&-2\\-1&2 \end{smallmatrix}\right)
$  of type $B_2$ of  rows 4--7, and
\item  $\left(\begin{smallmatrix}2&-3\\-1&2 \end{smallmatrix}\right)
$ of type $G_2$ of rows 11 and 12.
\end{itemize}
All Nichols algebras of type $A_1\times A_1$, $A_2$ and some of $B_2$ are lifted in Sections \ref{LiftofNichAlgCartMatrA1xA1}, \ref{LiftofNichAlgCartMatrA2}, \ref{LiftofNichAlgCartMatrB2}.

\item In the non-standard Weyl equivalence class of row 8 of \cite[Figure 1]{HeckWeylEquiv,HeckArtihRootSystRank2}  the  Cartan matrices
\begin{itemize}
\item 
$
\left(\begin{smallmatrix} 2&-2\\-2&2 \end{smallmatrix}\right)
$ of \Dchaintwo{}{$-\zeta ^{-2}$}{$-\zeta ^3$}{$-\zeta ^2$}, 
\item 
$
\left(\begin{smallmatrix} 2&-2\\-1&2 \end{smallmatrix}\right)
$ of type $B_2$ of \Dchaintwo{}{$-\zeta ^{-2}$}{$\zeta ^{-1}$}{$-1$},\ 
 \Dchaintwo{}{$-\zeta ^2$}{$-\zeta $}{$-1$}, and
\item  
$
\left(\begin{smallmatrix}2&-3\\-1&2 \end{smallmatrix}\right)
$ of type $G_2$  of \Dchaintwo{}{$-\zeta ^3$}{$\zeta $}{$-1$},\
\Dchaintwo{}{$-\zeta ^3$}{$-\zeta ^{-1}$}{$-1$}
\end{itemize}
appear. These Nichols algebras are lifted in Section \ref{LiftofNichAlgNonStandardType}. The same Cartan matrices appear in row 9, where we lift the Nichols algebras corresponding to the last two Dynkin diagrams.
\end{enumerate}
\end{exs}

    \section{$q$-commutator calculus}\label{SectqCommutCalc}
In this section let $A$ denote an arbitrary algebra over a field $\k$ of characteristic $\operatorname{char}\k=p\ge 0$.  The main result of this chapter is Proposition \ref{PropqCommut}, which states important $q$-commutator formulas in an arbitrary algebra.

\subsection{$q$-calculus} For every $q\in \k$ we define for $n\in\N$ and $0\le i\le n$ the \emph{$q$-numbers} $(n)_q := 1+q+q^2+\ldots+q^{n-1}$, the \emph{$q$-factorials} 
$(n)_q ! := (1)_q (2)_q \ldots (n)_q,$ 
and the \emph{$q$-binomial coefficients} $\tbinom{n}{i}_q:=\frac{(n)_q !}{(n-i)_q !(i)_q !}.$
Note that the latter right-handside is well-defined since it is a polynomial over $\Z$ evaluated in $q$.
We denote the \emph{multiplicative order} of any $q\in  \k^{\times}$ by $\ord q$.
If $q\in \k^{\times}$ and $n>1$, then 
\begin{align}\label{qBinomCoeffZero}
\binom{n}{i}_q =0\text{ for all }1\le i\le n-1 \Longleftrightarrow 
\begin{cases}
 \ord q = n ,&\mbox{if }\operatorname{char}\k=0\\
p^k\ord q =n\text{ with }k\ge 0 ,&\mbox{if }\operatorname{char}\k=p>0,
\end{cases}
\end{align}
see \cite[Cor. 2]{Radford}. Moreover for $1\le i\le n$ there are the \emph{$q$-Pascal identities}
\begin{align}\label{PascalDreieck}
q^i \binom{n}{i}_q + \binom{n}{i-1}_q = 
\binom{n}{i}_q +q^{n+1-i}\binom{n}{i-1}_q =\binom{n+1}{i}_q,
\end{align}
and the \emph{$q$-binomial theorem}: For $x,y\in A$ and $q\in \k^{\times}$ with $yx=qxy$ we have
\begin{align}\label{qBinomThm}
(x+y)^n=\sum_{i=0}^{n} \tbinom{n}{i}_q x^i y^{n-i}.
\end{align}
Note that for $q=1$ these are the usual notions.

\subsection{$q$-commutators}\label{qCommutExkX}\label{qCommutEx}
Let $\theta\ge 1$, $X=\{ x_1,\ldots,x_{\theta}\}$, $\X$ the free monoid and $A=\kX$ the free $\k$-algebra. For an abelian group $\G$ let $\widehat{\G}$ be the character group, $g_1,\ldots,g_{\theta}\in\G$ and $\chi_1,\ldots,\chi_{\theta}\in \widehat{\G}$. If we define the two monoid maps 
$$
\deg_{\G}:\X\rightarrow \G,\ \deg_{\G}(x_i):=g_i\quad\text{and}\quad \deg_{\Gh}:\X\rightarrow \Gh,\ \deg_{\Gh}(x_i):=\chi_i,
$$ 
for all $1\le i\le\theta$, then $\kX$ is $\G$- and $\Gh$-graded. 
Let $a\in \kX$ be $\G$-homogeneous and $b\in \kX$ be $\widehat{\G}$-homogeneous. We set 
$$
g_a:=\deg_{\G}(a), \quad \chi_b:=\deg_{\Gh}(b),\quad\text{and}\quad q_{a,b}:=\chi_b(g_a).
$$ 
Further we define $\k$-linearly on $\kX$ the \emph{$q$-commutator}
\begin{align}\label{qCommutExkXDefn}
 [a,b] := [a,b]_{q_{a,b}}.
\end{align}
Note that $q_{a,b}$ is a bicharacter on the homogeneous elements 
and depends only on the values 
$$
q_{ij}:=\chi_j(g_i)\text{ with }1\le i,j\le\theta.$$
For example $[x_1,x_2]=x_1x_2-\chi_2(g_1)x_2x_1=x_1x_2-q_{12}x_2x_1$. Further if $a,b$ are $\Z^{\theta}$-homogeneous they are both $\G$- and $\Gh$-homogeneous. In this case we can build iterated $q$-commutators, like $\bigl[x_1,[x_1,x_2]\bigr] =x_1[x_1,x_2]-\chi_1\chi_2(g_1)[x_1,x_2]x_1= x_1[x_1,x_2]-q_{11}q_{12}[x_1,x_2]x_1$.

\begin{prop}\emph{\cite[Prop.~1.2]{Helbig-PBW}}\label{PropqCommut} For all homogeneous $a,b,c\in \kX$ and $r\ge 1$ we have:

\emph{(1) $q$-derivation properties:}
$[a,bc]=[a,b] c + q_{a,b} b [a,c], \
   [ab,c]=a [b,c] + q_{b,c} [a,c] b.$

\emph{(2) $q$-Jacobi identity:}
$\bigl[[a,b], c\bigr] =\bigl[a,[b,c]\bigr] -q_{a,b} b [a,c]+ q_{b,c}[a,c] b.$

\emph{(3) $q$-Leibniz formulas:}
\begin{align*}
  [a,b^r] &= \sum_{i=0}^{r-1} q_{a,b}^i \tbinom{r}{i}_{q_{b,b}} 
                           b^i \bigl[\ldots\bigl[[a,\underbrace{b] , b\bigr]\ldots,b}_{r-i}\bigr],\\
  [a^r,b] &= \sum_{i=0}^{r-1} q_{a,b}^i \tbinom{r}{i}_{q_{a,a}} 
                      \bigl[\underbrace{a,\ldots\bigl[a,[a}_{r-i},b]\bigr]\ldots\bigr]a^i.
\end{align*}

\emph{(4) restricted $q$-Leibniz formulas:} If $\operatorname{char} \k=0$ and $\ord q_{b,b} = r$ resp.~$\ord q_{a,a} = r$, or $\operatorname{char} \k =p >0$ and $p^k \ord q_{b,b}\zeta =r$ resp.~$p^k\ord q_{a,a} = r$, then
\begin{align*}
 [a,b^r] = \bigl[\ldots\bigl[[a,\underbrace{b],b\bigr]\ldots,b}_{r}\bigr]\ \text{ resp. }\ 
 [a^r,b] = \bigl[\underbrace{a,\ldots\bigl[a,[a}_{r},b]\bigr]\ldots\bigr].
\end{align*}
\end{prop}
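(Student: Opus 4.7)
The plan is to prove the four items in the order stated, since each feeds into the next; all computations take place inside the free algebra $\kX$. The only external ingredient is the bicharacter identities $q_{a,bc}=q_{a,b}q_{a,c}$ and $q_{ab,c}=q_{a,c}q_{b,c}$ on homogeneous arguments, which follow at once from the fact that $\deg_{\G}$ and $\deg_{\Gh}$ are monoid homomorphisms. For (1) I would expand both sides directly: $[a,bc]=abc-q_{a,b}q_{a,c}\,bca$, and inserting $\pm q_{a,b}bac$ telescopes this into $(ab-q_{a,b}ba)c+q_{a,b}b(ac-q_{a,c}ca)=[a,b]c+q_{a,b}b[a,c]$; the companion formula for $[ab,c]$ is symmetric. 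For (2) I would apply these two derivation rules: expand $\bigl[[a,b],c\bigr]=[ab,c]-q_{a,b}[ba,c]$ by the second formula and $\bigl[a,[b,c]\bigr]=[a,bc]-q_{b,c}[a,cb]$ by the first, then subtract; both expansions contribute $abc$ and $q_{a,b}q_{a,c}q_{b,c}\,cba$, and the remaining crossed terms assemble precisely into $-q_{a,b}b[a,c]+q_{b,c}[a,c]b$.

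For (3) I would argue by induction on $r\ge 1$. Denote by $L_s(a)$ the iterated bracket $\bigl[\ldots\bigl[[a,b],b\bigr],\ldots,b\bigr]$ with $s$ outer applications of $[-,b]$, so $L_0(a)=a$ and $L_{s+1}(a)=[L_s(a),b]$. Since $L_s(a)$ is bihomogeneous of degree $g_a g_b^s$ in $\G$ and $\chi_a\chi_b^s$ in $\Gh$, one has $q_{L_s(a),b}=q_{a,b}q_{b,b}^{\,s}$, hence the key commutation $L_s(a)\,b=L_{s+1}(a)+q_{a,b}q_{b,b}^{\,s}\,b\,L_s(a)$. The base case $r=1$ is just $[a,b]=L_1(a)$. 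For the step I would use (1) to write $[a,b^{r+1}]=[a,b^r]\,b+q_{a,b}^{\,r}\,b^r[a,b]$, insert the induction hypothesis for $[a,b^r]$, push $b$ past every $L_{r-i}(a)$ via the commutation above, and reindex the two resulting sums. The coefficient of $b^jL_{r+1-j}(a)$ for $1\le j\le r-1$ becomes $\binom{r}{j}_{q_{b,b}}+q_{b,b}^{\,r+1-j}\binom{r}{j-1}_{q_{b,b}}$, which collapses to $\binom{r+1}{j}_{q_{b,b}}$ by the $q$-Pascal identity \eqref{PascalDreieck}; the boundary index $j=r$ works out because the stray summand $q_{a,b}^{\,r}b^rL_1(a)$ supplies the missing $\binom{r}{r}_{q_{b,b}}=1$ contribution, which combines with $q_{b,b}(r)_{q_{b,b}}$ to form $(r+1)_{q_{b,b}}=\binom{r+1}{r}_{q_{b,b}}$, while $j=0$ is immediate. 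The formula for $[a^r,b]$ follows by the symmetric argument. Finally (4) is a one-line corollary: by \eqref{qBinomCoeffZero} the stated hypotheses on $\ord q_{b,b}$ (resp.\ $\ord q_{a,a}$) force $\binom{r}{i}_{q_{b,b}}=0$ for every $1\le i\le r-1$, leaving only the $i=0$ summand of (3).

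The main obstacle will be the bookkeeping inside the induction step of (3): the two sums produced by applying the commutation $L_s(a)b=L_{s+1}(a)+q_{a,b}q_{b,b}^{\,s}bL_s(a)$ to each term of the induction hypothesis live on shifted index ranges, and one has to reindex them correctly and absorb the extra summand $q_{a,b}^{\,r}b^r[a,b]$ so that the coefficients meeting at each index match the $q$-Pascal identity exactly, rather than up to a stray factor of $q_{b,b}$.
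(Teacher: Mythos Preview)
Your proof is correct. Note, however, that this paper does not give its own proof of the proposition: it is stated with a citation to \cite[Prop.~1.2]{Helbig-PBW} and no argument is supplied here, so there is nothing in the present paper to compare against. Your direct expansion for (1), derivation of (2) from (1), inductive proof of (3) via the commutation $L_s(a)\,b=L_{s+1}(a)+q_{a,b}q_{b,b}^{\,s}\,b\,L_s(a)$ together with the $q$-Pascal identity, and the deduction of (4) from (3) via \eqref{qBinomCoeffZero} are all sound and constitute the standard approach one would expect for these identities.
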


\section{Lyndon words and $q$-commutators}\label{SectLyndW}

In this section we recall the theory of Lyndon words \cite{Lothaire,Reut} as far as we are concerned and then introduce the notion of super letters and super words \cite{KhPBW}.

\subsection{Words and the lexicographical order}
Let $\theta\ge 1$, $X=\{x_1,x_2,\ldots,x_{\theta}\}$ be a finite totally ordered set by $x_1<x_2<\ldots <x_{\theta}$, and $\X$ the free monoid; we think of $X$ as an alphabet and of $\X$ as the words in that alphabet including the empty word $1$. For a word $u=x_{i_1}\ldots x_{i_n}\in\X$ we define $\ell(u):=n$ and call it the \emph{length} of $u$. 

The \emph{lexicographical order} $\le$ on $\X$ is defined for $u,v\in\X$ by $u<v$ if and only if 
either $v$ begins with $u$, i.e., $v=uv'$ for some $v'\in\X\backslash\{1\}$, or if there are $w,u',v'\in \X$, $x_i,x_j\in X$ 
such that $u=wx_iu'$, $v=wx_jv'$ and $i<j$. E.g., $x_1<x_1x_2<x_2$. 

\subsection{Lyndon words and the Shirshov decomposition}
A word $u\in\X$ is called a \emph{Lyndon word} if $u\neq 1$ and $u$ is smaller than any of its proper endings, i.e., for all $v,w\in\X\backslash\{1\}$ such that $u=vw$ we have $u<w$. We denote by 
$$
\Ly:=\{u\in\X\,|\, u \text{ is a Lyndon word}\}
$$ the set of all Lyndon words. For example $X\subset\Ly$, but $x_i^n\notin \Ly$ for all $1\le i\le \theta$ and $n\ge 2$. 
Also $x_1x_2$, $x_1x_1x_2$, $x_1x_2x_2$,  $x_1x_1x_2x_1x_2\in\Ly$.

For any $u\in\X\backslash X$ we call the decomposition $u=vw$ with $v,w\in \X\backslash\{1\}$ such that $w$ is the minimal (with respect to the lexicographical order) ending  the \emph{Shirshov decomposition} of the word $u$. We will write in this case $$\Sh{u}{v}{w}.$$ E.g., $\Sh{x_1x_2}{x_1}{x_2}$, $\Sh{x_1x_1x_2x_1x_2}{x_1x_1x_2}{x_1x_2}$, $\nSh{x_1x_1x_2}{x_1x_1}{x_2}$.
If $u\in\Ly\backslash X$, this is equivalent to $w$ is the longest proper ending of $u$ such that $w\in\Ly$.

\begin{defn}
We call a subset $L\subset \Ly$  \emph{Shirshov closed} 
if
  $X\subset L$,
  and for all $u\in L$ with $\Sh{u}{v}{w}$ also $v,w\in L$.
\end{defn}

For example $\Ly$ is Shirshov closed, and if $X=\{x_1,x_2\}$, then $\{x_1,x_1x_1x_2,x_2\}$ is  not Shirshov closed,  whereas $\{x_1,x_1x_2,x_1x_1x_2,x_2\}$ is.

\subsection{Super letters and super words} Let the free algebra $\kX$ be graded as in Section \ref{qCommutExkX}. For any $u\in\Ly$ we define recursively on $\ell(u)$ the map 
\begin{align}\label{DefnSuperLett}
[\,.\,]:\Ly\rightarrow\kX,\quad u \mapsto [u].
\end{align}
If $\ell(u)=1$, then set $[x_i]:=x_i$ for all $1\le i\le\theta$. Else if $\ell(u)>1$ and $\Sh{u}{v}{w}$ we define $[u]:=\bigl[[v],[w]\bigr]$. This map is well-defined since inductively all $[u]$ are $\Z^{\theta}$-homogeneous such that we can build iterated $q$-commutators; see Section \ref{qCommutExkX}. The elements $[u]\in\kX$ with $u\in\Ly$ are called \emph{super letters}. E.g. $[x_1x_1x_2x_1x_2]=\bigl[[x_1x_1x_2],[x_1x_2]\bigr]=\bigl[[x_1,[x_1,x_2]],[x_1,x_2]\bigr]$.
If $L\subset \Ly$ is Shirshov closed then the subset of $\kX$
$$
[L]:=\bigl\{[u]\,\big|\, u\in L\bigr\}
$$ 
is a set of iterated $q$-commutators. Further 
$
[\Ly]=\bigl\{[u]\,\big|\, u\in\Ly\bigr\}
$
is the set of all super letters and the map 
$[\,.\,]:\Ly\rightarrow[\Ly]$ is a bijection, which follows from \cite[Lem.~2.5]{Helbig-Presentation}. Hence we can define an order $\le$ of the super letters $[\Ly]$ by 
$$
[u]< [v]:\Leftrightarrow u<v,
$$ thus $[\Ly]$ is a new alphabet containing the original alphabet $X$; so the name ``letter'' makes sense. Consequently, products of super letters are called \emph{super words}. We denote 
$$
[\Ly]^{(\N)}:=\bigl\{[u_1]\ldots [u_n]\,\bigl|\,n\in\N,\, u_i\in\Ly\bigr\}
$$
the subset of $\kX$ of all super words. Any super word has a unique factorization in super letters \cite[Prop.~2.6]{Helbig-Presentation}, hence we can define the lexicographical order on $\SupW$, as defined above on regular words. We denote it also by $\le$.

\subsection{A well-founded ordering of super words}\label{SectWellFoundOrder} The \emph{length} of a super word $U=[u_1][u_2]\ldots[u_n]\in\SupWL$ is defined as 
$
\ell(U):=\ell(u_1u_2\ldots u_n).
$
\begin{defn}
For $U, V\in \SupW$ we define $U\prec V$ by 
\begin{itemize}
\item $\ell(U)<\ell(V)$, or
\item $\ell(U)=\ell(V)$  and $U>V$ lexicographically in $\SupW$.
\end{itemize}
\end{defn}
This defines a total ordering of $\SupW$ with minimal element $1$. As $X$ is assumed to be finite, there are only finitely many super letters of a given length. Hence every nonempty subset of $\SupW$ has a minimal element, or equivalently, $\preceq$ fulfills the descending chain condition: $\preceq$ is \emph{well-founded}. This makes way for inductive proofs on $\preceq$.

  \section{A class of pointed Hopf algebras}\label{SectCharHA}
In this chapter we deal with a special class of pointed Hopf algebras. Let us recall the notions and results of \cite[Sect.~3]{KhPBW}: A Hopf algebra $A$ is called a \emph{character Hopf algebra} if it is generated as an algebra by 
elements $a_1,\ldots ,a_{\theta}$ and an abelian group $G(A)=\G$ of all group-like elements such that  for all $1\le i\le \theta$ there are $g_i\in\G$ and $\chi_i\in\Gh$ with
\begin{align*}
\Delta(a_i)=a_i\o 1 +g_i\o a_i\qquad \text{and}\qquad
 ga_i = \chi_i(g)a_i g.
\end{align*}
As mentioned in the introduction this covers a wide class of examples of Hopf algebras. 

\begin{thm}\emph{\cite[Thm.~3.4]{Helbig-Presentation}} 
\label{PropIdealCharHopfAlg}
If $A$ is a character Hopf algebra, then
$$
A\cong (\kX \# \k[\G])/I,
$$
where the smash product $\kX\#\k[\G]$ and the ideal $I$ are constructed in the following way:
\end{thm}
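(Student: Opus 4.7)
The plan is to realise $A$ as a quotient of the bosonisation $\kX \# \k[\G]$ by constructing an explicit surjective Hopf algebra map $\pi$ and taking $I:=\ker\pi$.

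First I would equip $\kX$ with the structure of a braided Hopf algebra in $\YDG$. On the generators one declares $x_i\in V_{g_i}$ and $g\cdot x_i := \chi_i(g)x_i$, and then extends the $\G$-grading and the $\G$-action multiplicatively as in Section \ref{SectYetterDrinfeldAbGr}; with this Yetter--Drinfel'd structure the free algebra $\kX\cong T(V)$ becomes a braided Hopf algebra in $\YDG$ with each $x_i$ primitive. Bosonising yields an ordinary Hopf algebra $\kX\#\k[\G]$ whose defining relation $(1\#g)(x_i\#1) = \chi_i(g)(x_i\#1)(1\#g)$ and coproduct $\Delta(x_i\#1) = (x_i\#1)\o(1\#1) + (g_i\#1)\o(x_i\#1)$ mirror exactly those satisfied by $a_i$ and $g$ in the character Hopf algebra $A$.

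Next, the universal properties of the free algebra, the group algebra, and the smash product together produce a unique algebra map $\pi:\kX\#\k[\G]\to A$ sending $x_i\#1\mapsto a_i$ and $1\#g\mapsto g$. The hypotheses $ga_i = \chi_i(g)a_ig$ and $\Delta(a_i) = a_i\o 1 + g_i\o a_i$ from the definition of a character Hopf algebra are precisely what is needed so that $\pi$ respects the smash product relations and the coproduct, hence is a Hopf algebra map. Since $A$ is generated as an algebra by the $a_i$ and $\G$, the map $\pi$ is surjective, so $I:=\ker\pi$ is a Hopf ideal and the first isomorphism theorem gives $A\cong (\kX\#\k[\G])/I$ as Hopf algebras.

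The genuine substance of the theorem lies in extracting a concrete description of $I$, and this is the step I expect to be the main obstacle. The key observation is that $\kX\#\k[\G]\cong \kX\o\k[\G]$ as $\k$-vector spaces and that distinct group-like elements of $\G$ remain linearly independent in $A$; combined with the $\G$-grading on $\kX$, this allows one to normalise any element of $I$ so that its ``group part'' disappears and the resulting relation can be written in $\kX\#1$. What then remains is to show that $I$ is generated, as a two-sided ideal of $\kX\#\k[\G]$, by such purely-$\kX$ elements — i.e.\ that the non-trivial relations in $A$ admit a defining set involving only the $a_i$ and not the group elements — and to identify a non-redundant such set. This detailed enumeration is the construction alluded to in the statement and carried out in \cite[Sect.~3]{Helbig-Presentation}.
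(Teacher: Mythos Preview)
The paper does not prove this theorem; it is quoted from \cite{Helbig-Presentation}, and Sections~4.1--4.2 only \emph{describe} the construction of $\kX\#\k[\G]$ and of the ideal $I$ that the statement refers to. Your first two paragraphs correctly handle the trivial part: the universal properties give a surjective Hopf algebra map $\pi:\kX\#\k[\G]\to A$, $x_i\mapsto a_i$, $g\mapsto g$, and then $A\cong(\kX\#\k[\G])/\ker\pi$.

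Your third paragraph, however, misidentifies what has to be shown. You assert that one can normalise so that the ``group part'' disappears and the relations live in $\kX\#1$, i.e.\ involve ``only the $a_i$ and not the group elements''. This is exactly wrong and contradicts Section~4.2: the correction terms $\red{w}$ and $\redh{u}$ are allowed to (and in the applications of Section~5 typically \emph{do}) contain summands of the form $Vg$ with $g\in\G$ nontrivial --- this is part of the definition of $\prec_L$. Relations such as $x_i^{N_i}-\mu_i(1-g_i^{N_i})$ are the prototypical example; the entire lifting problem treated later in the paper would be vacuous if the relations could be pushed into $\kX\#1$. The actual content of the cited theorem is a PBW-type structural statement: there is a Shirshov closed $L\subset\Ly$ such that $\ker\pi$ is generated by $\Gh$-homogeneous elements of the two specific shapes $[w]-\red{w}$ for $w\in C(L)$ and $[u]^{N_u}-\redh{u}$ for $u\in D(L)$, with the corrections $\prec_L$-smaller than the leading super word. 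Establishing this triangular form requires Kharchenko's combinatorics of Lyndon words and super letters, not the linear-independence argument you sketch.
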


\subsection{The smash product $\kX\# \k[\G]$}
Let $\kX$ be $\G$- and $\Gh$-graded as in Section \ref{qCommutExkX}, and $\k[\G]$ be endowed with the usual bialgebra structure $\Delta(g)=g\o g$ and $\varepsilon(g)=1$ for all $g\in\G$. 
Then we define 
$$
g\cdot x_i := \chi_i(g)x_i,\ \text{ for all }1\le i\le\theta.
$$
In this case, $\kX$ is a $\k[\G]$-module algebra and we calculate $gx_i=\chi_i(g)x_ig$, $gh=hg=\varepsilon(g)hg$ in $\kX\# \k[\G]$.
Thus  $x_i\in(\kX\# \k[\G])^{\chi_i}$ and $\k[\G]\subset (\kX\# \k[\G])^\varepsilon$ and in this way 
$
\kX\# \k[\G]=\oplus_{\chi\in\Gh}(\kX\# \k[\G])^\chi.
$
This $\Gh$-grading extends the $\Gh$-grading of $\kX$ in Section \ref{qCommutExkX} to $\kX\# \k[\G]$. 
Further $\kX\# \k[\G]$ is a Hopf algebra with structure determined for all $1\le i \le\theta$ and $g\in\G$ by 
\begin{align*}
 \Delta(x_i):=x_i\o 1 + g_i\o x_i\qquad\text{and}\qquad  \Delta(g):=g\o g.
\end{align*}

\subsection{Ideals associated to Shirshov closed sets}\label{SectIdealOfCharHA}
In this subsection we fix a Shirshov closed $L\subset\Ly$. 
We want to introduce the following notation 
for an $a\in \kX\# \k[\G]$ and  
$W\in\SupW$: 
We will write $a\prec_L W$ (resp. $a\preceq_L W$), if 
$a$ is a linear combination of 
\begin{enumerate}
 \item[\textbullet] $U\in\SupWL$ with $\ell(U)=\ell(W)$, $U>W$ (resp. $U\ge W$), and
\item[\textbullet]  $Vg$ with $V\in\SupWL$, $g\in\G$,  $\ell(V)<\ell(W)$.
\end{enumerate}

Furthermore,  we set for each $u\in L$
either $N_u:=\infty$ or $N_u:=\ord q_{u,u}$  (resp.~$N_u:=p^k\ord q_{u,u}$ with $k\ge 0$ if $\operatorname{char} \k=p>0$) and we want to distinguish the following two sets of words depending on $L$:
\begin{align*}
C(L) &:= \bigl\{w\in\X\backslash L \ |\ \exists u,v\in L:  w=uv,\ u<v,\  \text{and}\ \Sh{w}{u}{v}\bigr\},\\
D(L) &:= \bigl\{u\in L \ |\ N_u<\infty\}.
\end{align*}
Note that $C(L)\subset \Ly$ and $D(L)\subset L\subset \Ly$ are sets of Lyndon words. For example, if $L=\{x_1,x_1x_1x_2,x_1x_2,x_2\}$, then  $C(L)=\{x_1x_1x_1x_2$, $x_1x_1x_2x_1x_2$, $x_1x_2x_2\}$.

Moreover, let $\red{w} \in (\kX\# \k[\G])^{\chi_{w}}$ for all $w\in C(L)$ such that $\red{w}\prec_L [w]$;
and let $\redh{u} \in (\kX\# \k[\G])^{\chi_u^{N_u}}$ for all $u\in D(L)$ such that $\redh{u}\prec_L [u]^{N_u}$. 
Then let $I$ be the $\Gh$-homogeneous ideal of $\kX\# \k[\G]$ generated by the following elements: 
\begin{align}
[w]         - \red{w}&    &&\text{for all } w\in C(L) ,\label{ThmPBWCritIdluv}\\
 [u]^{N_u}   - \redh{u}&    &&\text{for all }  u\in D(L).\label{ThmPBWCritIdluN}
\end{align}

    \subsection{Calculation of coproducts}\label{SectCoproducts}
Let in this section $\operatorname{char}\k=0$. 
For any $g\in\G,\chi\in\Gh$ we set 
$$
P_g^\chi:=P_g^\chi(A):=P_{1,g}(A)\cap A^\chi=\{a\in A\ |\ \Delta(a)=a\o 1+g\o a,\, ga=\chi(g)ag \}.
$$
Although the following calculations are for $\kX\# \k[\G]$, we can use the results in any character Hopf algebra $A$ by the canonical Hopf algebra map $\kX\# \k[\G]\rightarrow A$.  Assume again the situation of Section \ref{qCommutEx}.

\begin{lem} \label{LemSkewPrimRootSerre}
Let $1\le i< j\le\theta$ and $r\ge 1$.
\begin{enumerate}
\item[\emph{(1)}] If $\ord q_{ii}=N$, then $x_i^N\in P_{g_i^N}^{\chi_i^N}$.
\item[\emph{(2)}] If $q_{ij}q_{ji}=q_{ii}^{-(r-1)}$ and $r \le \ord q_{ii}$, then $[x_i^r x_j]\in P_{g_i^r g_j}^{\chi_i^r \chi_j}$.
\item[\emph{(3)}] If $q_{ij}q_{ji}=q_{jj}^{-(r-1)}$ and $r \le \ord q_{jj}$, then $[x_ix_j^r]\in P_{g_ig_j^r}^{\chi_i \chi_j^r}$.
\end{enumerate}
\end{lem}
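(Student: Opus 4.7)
Part (1) is a direct application of the $q$-binomial theorem in $A\otimes A$ with $A:=\kX\#\k[\G]$. The commutation $g_i x_i = q_{ii}\,x_i g_i$ gives $(g_i\otimes x_i)(x_i\otimes 1)=q_{ii}(x_i\otimes 1)(g_i\otimes x_i)$, so \eqref{qBinomThm} yields
\[
\Delta(x_i^N) = \Delta(x_i)^N = \sum_{k=0}^{N}\binom{N}{k}_{q_{ii}}\,x_i^k g_i^{N-k}\otimes x_i^{N-k}.
\]
Since $\ord q_{ii}=N$, the criterion \eqref{qBinomCoeffZero} kills every middle coefficient, leaving $\Delta(x_i^N)=x_i^N\otimes 1+g_i^N\otimes x_i^N$; the character identity $g\,x_i^N=\chi_i^N(g)\,x_i^N g$ follows by iterating $g\,x_i=\chi_i(g)\,x_i g$ exactly $N$ times.

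Parts (2) and (3) are mirror images under swapping $i\leftrightarrow j$ together with left/right, so I focus on (2). Set $y_r:=[x_i^r x_j]$, so $y_0=x_j$ and $y_r=[x_i,y_{r-1}]$, and induct on $r$. The base case $r=1$ is the direct expansion $\Delta([x_i,x_j])=\Delta(x_i)\Delta(x_j)-q_{ij}\Delta(x_j)\Delta(x_i)$ combined with $g_j x_i=q_{ji}\,x_i g_j$ and $g_i x_j=q_{ij}\,x_j g_i$; the only cross term is $(q_{ji}^{-1}-q_{ij})\,g_j x_i\otimes x_j$, which vanishes precisely under $q_{ij}q_{ji}=1$. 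For $r\ge 2$ the hypothesis on $r$ does \emph{not} reduce to the hypothesis on $r-1$, so I strengthen the inductive claim by tracking all residual cross terms.

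Concretely, in the unrestricted setting (no relation imposed on $q_{ij}q_{ji}$) compute $\Delta(y_r)=[\Delta(x_i),\Delta(y_{r-1})]_{q_{ii}^{r-1}q_{ij}}$ using the $q$-derivation property of Proposition~\ref{PropqCommut}(1) together with the commutation rules $x_i g=\chi_i(g)^{-1}g\,x_i$ and $g\,y_s=\chi_i^s\chi_j(g)\,y_s g$; grouping by the second tensor factor produces the ansatz
\[
\Delta(y_r) = y_r\otimes 1 + g_i^r g_j\otimes y_r + \sum_{k=1}^{r}\lambda_k^{(r)}\,g_i^{r-k}g_j x_i^k\otimes y_{r-k}.
\]
The resulting two-term recursion $\lambda_k^{(r)}=\alpha_{r,k}\lambda_k^{(r-1)}+\beta_{r,k}\lambda_{k-1}^{(r-1)}$ closes, via the $q$-Pascal identities \eqref{PascalDreieck}, into a telescoping product containing the factor $\prod_{s=0}^{k-1}\bigl(q_{ii}^{\,r-1-s}q_{ij}q_{ji}-1\bigr)$. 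Under $q_{ij}q_{ji}=q_{ii}^{-(r-1)}$ the $s=0$ factor vanishes in every product, so $\lambda_k^{(r)}=0$ for all $k\ge 1$, yielding the claim; the bound $r\le\ord q_{ii}$ keeps the $q$-binomial coefficients that appear in the closed form nondegenerate, so no hidden cancellations obscure the conclusion. The main obstacle is the combinatorial bookkeeping needed to match the four families of contributions coming from the expansion of $\Delta(x_i)\Delta(y_{r-1})$ and $\Delta(y_{r-1})\Delta(x_i)$ and to identify the correct closed form for $\lambda_k^{(r)}$; once the telescoping-product structure is established, the hypothesis kills every cross term at once.
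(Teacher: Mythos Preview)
Your proof is correct and follows the same line as the paper: part (1) is exactly the paper's argument via the $q$-binomial theorem, and for (2) and (3) you spell out the standard inductive computation of $\Delta\bigl([x_i^r x_j]\bigr)$ that the paper simply defers to \cite[Lem.~A.1]{AS-CartMatr}. One minor imprecision: your explanation of why the bound $r\le\ord q_{ii}$ is needed is not quite right---the cross terms vanish from the product factor $\prod_{s=0}^{k-1}(q_{ii}^{r-1-s}q_{ij}q_{ji}-1)$ alone, independently of whether the accompanying $q$-binomial coefficients degenerate---but this does not affect the validity of the argument.
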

\begin{proof} (1) We have $(g_i\o x_i)(x_i\o 1)=q_{ii}(x_i\o 1)(g_i\o x_i)$ hence by Eq.~\eqref{qBinomThm} we obtain the claim.
For (2) and (3) see \cite[Lem.~A.1]{AS-CartMatr}.
\end{proof}
Next we want to examine certain coproducts in the special case when $q_{ii}=-1$ for a $1\le i\le\theta$. Note that in the following two Lemmata we could write more generally $i$ and $j$ with $1\le i< j\le\theta$ instead of $1$ and $2$:
\begin{lem}  \label{LemSkewZ}
Let 
$\ord q_{12,12}=N$.
\begin{enumerate}
 \item[\emph{(1)}] If $q_{22}=-1$, we have for the quotient $(\kX\# \k[\G])/(x_2^2)$
\begin{align*}
 \Delta\bigl([x_1x_2]^N\bigr)=[x_1&x_2]^N\o 1  +  g_{12}^N\o [x_1x_2]^N\\
 &+    q_{2,12}^{N-1}(1-q_{12}q_{21}) [x_1(x_1x_2)^{N-1}] g_2\o x_2.
\end{align*}

\item[\emph{(2)}] If $q_{11}=-1$, we have for the quotient $(\kX\# \k[\G])/(x_1^2)$ 
\begin{align*}
 \Delta\bigl([x_1x_2]^N\bigl)=[x_1&x_2]^N\o 1 +  g_{12}^N\o [x_1x_2]^N\\
   &+   q_{1,12}^{N-1}(1-q_{12}q_{21}) x_1 g_{12}^{N-1} g_2\o [(x_1x_2)^{N-1} x_2]. 
\end{align*}

\end{enumerate}
\end{lem}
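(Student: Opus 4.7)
I write $y:=[x_1,x_2]$ and, starting from $\Delta(x_i)=x_i\o 1+g_i\o x_i$ together with $g_ix_j=q_{ij}x_jg_i$, compute directly that
$$
\Delta(y) \;=\; A + B + C, \quad A:=y\o 1,\ B:=g_{12}\o y,\ C:=(1-q_{12}q_{21})\,x_1g_2\o x_2.
$$
The key preliminary observations are $BA=q_{12,12}AB$ (from $g_{12}y=q_{12,12}yg_{12}$) and $C^2\equiv 0$ modulo $1\o x_2^2$ for part (1), respectively modulo $x_1^2\o 1$ for part (2). Consequently, in the relevant quotient only monomials with $0$ or $1$ factors of $C$ survive, so
$$
\Delta(y^N)\;\equiv\;(A+B)^N\;+\;\sum_{k=0}^{N-1}(A+B)^kC(A+B)^{N-1-k}.
$$
For the first summand the $q$-binomial theorem \eqref{qBinomThm} yields $(A+B)^N=\sum_i\binom{N}{i}_{q_{12,12}}A^iB^{N-i}$, and since $\ord q_{12,12}=N$ and $\operatorname{char}\k=0$, the criterion \eqref{qBinomCoeffZero} forces all interior $q$-binomials to vanish, giving $A^N+B^N=y^N\o 1+g_{12}^N\o y^N$.

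For the single-$C$ sum I would proceed by induction on $N$: expand each $(A+B)^k$ by the $q$-binomial theorem and normal-order every monomial $A^iB^{k-i}CA^jB^{N-1-k-j}$ using $g_{12}x_1=q_{11}q_{21}\,x_1g_{12}$, $g_2y=q_{2,12}\,yg_2$, and the key mod-relation $yx_2\equiv -q_{12}\,x_2y\pmod{x_2^2}$ for part (1) (read off from $y=x_1x_2-q_{12}x_2x_1$), or its mirror $x_1y\equiv -q_{12}\,yx_1\pmod{x_1^2}$ for part (2). The hypothesis $q_{22}=-1$ (resp.\ $q_{11}=-1$) enters through $q_{12,12}=q_{11}q_{12}q_{21}q_{22}$ as the compatibility $q_{11}q_{12}q_{21}=-q_{12,12}$ (resp.\ $q_{12}q_{21}q_{22}=-q_{12,12}$), and combined with the interior vanishing of $\binom{N}{i}_{q_{12,12}}$ it makes all contributions of the form $\cdots\o x_2y^m$ (resp.\ $y^mx_1\o\cdots$) with $m\ge 1$ cancel by $q$-Pascal telescoping \eqref{PascalDreieck}. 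The surviving piece, collected on $\cdots\o x_2$ in part (1) and on $x_1\o\cdots$ in part (2), is then identified with the claimed super letter via the Shirshov recursion $[x_1(x_1x_2)^k]=\bigl[[x_1(x_1x_2)^{k-1}],y\bigr]$ (respectively $[(x_1x_2)^kx_2]=\bigl[y,[(x_1x_2)^{k-1}x_2]\bigr]$), and the accumulated scalar works out to $q_{2,12}^{N-1}(1-q_{12}q_{21})$ (resp.\ $q_{1,12}^{N-1}(1-q_{12}q_{21})$) after collecting the commutation factors.

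The main obstacle is the bookkeeping in the last step: a priori the one-$C$ sum contains many monomials indexed by two $q$-binomial coefficients and by the insertion position of $C$, and distilling the surviving coefficient into an iterated $q$-commutator requires both the correct nested-sum identity and a careful verification that all spurious $\o x_2y^m$ (or $y^m x_1\o$) terms cancel. The cancellations are ultimately driven by the vanishing $(N)_{q_{12,12}}=0$ at $\ord q_{12,12}=N$ together with \eqref{PascalDreieck}, so the proof amounts to a careful but routine telescoping once the right closed form is guessed; parts (1) and (2) are structurally mirror computations exchanging the two tensor factors and the roles of $x_1$ and $x_2$.
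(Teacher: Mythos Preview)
Your outline is sound but takes a harder road than the paper, and the step you call ``routine telescoping'' is where the real work sits. The paper sidesteps it via one observation you stop just short of: your relation $yx_2\equiv -q_{12}\,x_2y$ in the quotient (equivalently $[x_1x_2x_2]=[x_1,x_2^2]\equiv 0$ by the restricted $q$-Leibniz formula, since $q_{22}=-1$) says that $B$ and $C$ themselves $q$-commute, $BC=q_{12,12}\,CB$. Together with $BA=q_{12,12}\,AB$ this gives $B(A+C)=q_{12,12}(A+C)B$, so the $q$-binomial theorem yields $(A+B+C)^N=(A+C)^N+B^N$ directly. The paper then proves by a short induction (using only that $[CA^s]\cdot C$ has right tensor factor $x_2^2=0$) the closed form $(A+C)^r=A^r+\sum_{i=0}^{r-1}\tbinom{r}{i}_{q_{12,12}}A^i[CA^{r-1-i}]$ with $[CA^s]:=(1-q_{12}q_{21})q_{2,12}^{\,s}[x_1(x_1x_2)^s]g_2\o x_2$; at $r=N$ only the $i=0$ summand survives, and that is the claimed extra term. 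Your grouping $(A+B)+C$ instead forces you to chase, for every $m\ge 1$, all contributions with right tensor factor $x_2y^m$ across insertion positions and double $q$-binomial weights; those cancellations can be carried out, but they are not a single Pascal telescoping---they ultimately repackage the identity $BC=q_{12,12}\,CB$ that the paper's grouping exploits for free. Part~(2) is the mirror image: $q_{11}=-1$ gives $[x_1x_1x_2]=[x_1^2,x_2]\equiv 0$, hence $CA=q_{12,12}\,AC$, and one groups $(B+C)+A$ instead.
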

\begin{proof}We calculate  directly in  $\kX\# \k[\G]$
$$
\Delta([x_1x_2])=[x_1x_2]\o 1 + (1-q_{12}q_{21})x_1g_2\o x_2 +g_{12}\o [x_1x_2].
$$
For $\alpha:=(1-q_{12}q_{21})$, $q:=q_{12,12}$, $U:=[x_1x_2]\o 1$, $V:=\alpha x_1g_2\o x_2$ and $W:=g_{12}\o [x_1x_2]$ we have $WU=qUW$ and
\begin{align*}
VU-qUV &= \alpha q_{2,12} [x_1x_1x_2]g_2\o x_2,\\
WV-qVW &= \alpha q_{12,1} x_1g_{12}g_2 \o [x_1x_2x_2].
\end{align*}
We further set for $r\ge 1$
\begin{align*}
[V]&:=V,&[VU^r]&:=\alpha q_{2,12}^r [x_1(x_1x_2)^r]
g_2\o x_2,\\
[W]&:=W,& [W^rV]
&:=\alpha q_{1,12}^r x_1 g_{12}^r g_2\o [(x_1x_2)^r x_2].
\end{align*}

(1) We have $[x_1x_2x_2]=[x_1,x_2^2]=0$ by the restricted $q$-Leibniz formula and $x_2^2=0$. Hence  $WU=qUW$ and $WV=qVW$. By Eq.~\eqref{qBinomThm} we have
$$
\Delta([x_1x_2]^r)=(U+V+W)^r=(U+V)^r + W^r.
$$
We state for $r\ge 1$
$$
(U+V)^r=U^r + \sum_{i=0}^{r-1}\tbinom{r}{i}_q U^i[VU^{(r-1)-i}],
$$
from where the claim follows. This we
prove by induction on $r$: For $r=1$ the claim is true. By induction assumption
\begin{align*}
 (&U+V)^{r+1}=(U+V)^r(U+V)\\
 &= U^{r+1}+\sum_{i=0}^{r-1}\tbinom{r}{i}_q U^i[VU^{(r-1)-i}]U+U^rV+\sum_{i=0}^{r-1}\tbinom{r}{i}_q U^i[VU^{(r-1)-i}]V,
\end{align*}
where the last sum is zero since $[VU^{(r-1)-i}]V=\ldots\o x_2^2=0$ for all $0\le i\le r-1$. Further
\begin{align*}
[VU^{(r-1)-i}]U&=\alpha q_{2,12}^{(r-1)-i} [x_1(x_1x_2)^{(r-1)-i}] g_2[x_1x_2]\o x_2 \\
&=\alpha q_{2,12}^{r-i} \bigl([x_1(x_1x_2)^{r-i}][x_1x_2]\\
&\qquad\qquad+ q_{1,12}q^{(r-1)-i} [x_1x_2][x_1(x_1x_2)^{(r-1)-i}]  \bigr)g_2\o x_2   \\
&=[VU^{r-i}]+q^{r-i}U[VU^{(r-1)-i}].
\end{align*}
Thus $(U+V)^{r+1}=$ 
\begin{align*}
&= U^{r+1}+\sum_{i=0}^{r-1}\tbinom{r}{i}_q U^i [VU^{r-i}] +U^rV
 +\sum_{i=0}^{r-1}\tbinom{r}{i}_q q^{r-i}  U^{i+1}[VU^{(r-1)-i}] \\
    &=U^{r+1}+\sum_{i=0}^{r}\bigl(\tbinom{r}{i}_q+ \tbinom{r}{i-1}_q q^{r+1-i}\bigr) U^i [VU^{r-i}],
\end{align*}
by shifting the index of the second sum. Using Eq.~\eqref{PascalDreieck} we get the desired formula. 

(2) is proven analogously with the formula $(V+W)^r=W^r + \sum_{i=0}^{r-1}\tbinom{r}{i}_q [W^{(r-1)-i}V]V^i.
$
\end{proof}
A direct computation in $\kX\# \k[\G]$ shows that
\begin{align*}
\Delta&([x_1x_1x_2x_1x_2]) =  [x_1x_1x_2x_1x_2] \otimes 1   +   g_1^3g_2^2 \otimes [x_1x_1x_2x_1x_2]\\
      &\quad + \alpha  [x_1x_1x_2] g_1g_2\otimes [x_1x_2]\\
      &\quad + (1-q_{12}q_{21}) \big(q_{21}q_{22}\beta [x_1x_1x_1x_2]+\alpha  [x_1x_1x_2]x_1 \big) g_2 \otimes x_2\\
      &\quad + (1-q_{12}q_{21})(1- q_{11}q_{12}q_{21}) x_1^2g_1g_2^2\\
      &\qquad\qquad\qquad \otimes
	        \big( q_{11}q_{21}(1+q_{11}-q_{11}^3 q_{12}^2 q_{21}^2 q_{22})[x_1x_2x_2] + \alpha x_2 [x_1x_2] \big) \\
      &\quad + q_{21}(1-q_{12}q_{21})^2(1-q_{11}q_{12}q_{21}) (1- q_{11}^2 q_{12}^2 q_{21}^2 q_{22})  x_1^3 g_2^2\otimes x_2^2\\
      &\quad + x_1 g_1^2g_2^2 \otimes  \big(\gamma [x_1x_2]^2 + q_{11}^2q_{21}(1-q_{12}q_{21}) [x_1x_1x_2x_2] \big),
\end{align*}
with
\begin{align*}
\alpha &:= (2)_{q_{11}}q_{11}q_{12}q_{21}q_{22}(1  - q_{11}q_{12}q_{21}) + 1- q_{11}^4 q_{12}^3 q_{21}^3 q_{22}^2 , \\
\beta     &:= 1-q_{11}q_{12}q_{21} - q_{11}^2 q_{12}^2 q_{21}^2 q_{22},\\
\gamma     &:= q_{11}^2 q_{21}q_{12}(1-q_{12}q_{21})(q_{22}-q_{11})\\
        &\qquad\qquad\qquad +(2)_{q_{11}} (1- q_{11}q_{12}q_{21}) (1 - q_{11}^3 q_{12}^2 q_{21}^2 q_{22}).
\end{align*}
\begin{lem}\label{LemSkew11212} Let $q_{22}=-1$.  Then
\begin{align*}
\alpha = (3)_{q_{12,12}} (1-q_{11}^2q_{12}q_{21}),\quad
\beta    = (3)_{q_{12,12}}, \quad
\gamma   = (2)_{q_{11}}(3)_{q_{12,12}}(1-q_{11}^2q_{12}q_{21}).
\end{align*}
As a consequence we have the following:
\begin{enumerate}
\item[\emph{(1)}] If $\ord q_{12,12}=3$, then 
\begin{align*}
\Delta&([x_1x_1x_2x_1x_2]) =  [x_1x_1x_2x_1x_2] \otimes 1   +   g_1^3g_2^2 \otimes [x_1x_1x_2x_1x_2]\\
      &\quad + (1-q_{12}q_{21})(1- q_{11}q_{12}q_{21}) x_1^2g_1g_2^2\\
      &\qquad\qquad\qquad \otimes
	         q_{11}q_{21}(1+q_{11}-q_{11}^3 q_{12}^2 q_{21}^2 q_{22})[x_1x_2x_2]  \\
      &\quad + q_{21}(1-q_{12}q_{21})^2(1-q_{11}q_{12}q_{21}) (1- q_{11}^2 q_{12}^2 q_{21}^2 q_{22})  x_1^3 g_2^2\otimes x_2^2\\
      &\quad + x_1 g_1^2g_2^2 \otimes   q_{11}^2q_{21}(1-q_{12}q_{21}) [x_1x_1x_2x_2] .
\end{align*}
Hence  $[x_1x_1x_2x_1x_2]\in 
P_{g_1^3g_2^2}^{\chi_1^3\chi_2^2}$ in the quotient $(\kX\# \k[\G])/(x_2^2)$.

\item[\emph{(2)}] If $q_{12}q_{21}=q_{11}^{-2}$ and $\ord q_{11}=3$, then
\begin{align*}
\Delta&([x_1x_1x_2x_1x_2]) =  [x_1x_1x_2x_1x_2] \otimes 1   +   g_1^3g_2^2 \otimes [x_1x_1x_2x_1x_2]\\
      &\quad + (1-q_{12}q_{21}) q_{21}q_{22}\beta [x_1x_1x_1x_2] g_2 \otimes x_2\\
      &\quad + q_{21}(1-q_{12}q_{21})^2(1-q_{11}q_{12}q_{21}) (1- q_{11}^2 q_{12}^2 q_{21}^2 q_{22})  x_1^3 g_2^2\otimes x_2^2\\
      &\quad + x_1 g_1^2g_2^2 \otimes   q_{11}^2q_{21}(1-q_{12}q_{21}) [x_1x_1x_2x_2].
\end{align*}
Hence $[x_1x_1x_2x_1x_2]\in P_{g_1^3g_2^2}^{\chi_1^3\chi_2^2}$ in the quotients $(\kX\# \k[\G])/(x_2^2,\, [x_1x_1x_1x_2])$\\ or $(\kX\# \k[\G])/(x_1^3,\, [x_1x_1x_2x_2])$.
\end{enumerate}
\end{lem}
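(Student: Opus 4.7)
The plan is to split the lemma into two pieces: (a) the closed-form identities for $\alpha,\beta,\gamma$ when $q_{22}=-1$, which is pure polynomial algebra, and (b) the two consequences, which follow by feeding these simplifications into the general coproduct formula displayed just above the lemma and then killing the residual nonprimitive terms in the appropriate quotient via the restricted $q$-Leibniz formula (Proposition~\ref{PropqCommut}(4)).

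For part (a) I would set $s:=q_{11}$, $t:=q_{11}q_{12}q_{21}$, and use the key observation that $q_{12,12}=\chi_1\chi_2(g_1g_2)=q_{11}q_{12}q_{21}q_{22}$ specializes to $-t$ at $q_{22}=-1$, so $(3)_{q_{12,12}}=1-t+t^2$. The formula for $\beta$ is then immediate. For $\alpha$, after substituting $q_{22}=-1$ one rewrites the two summands as $-(1+s)t(1-t)$ and $1-st^{3}$ and observes that their sum factors as $(1-t+t^2)(1-st)$; since $st=q_{11}^2q_{12}q_{21}$, this is exactly $(3)_{q_{12,12}}(1-q_{11}^2q_{12}q_{21})$. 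The same factorization trick applied to $\gamma$ produces an extra factor $(1+s)=(2)_{q_{11}}$.

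For consequence (1) the hypothesis $\ord q_{12,12}=3$ makes $(3)_{q_{12,12}}=0$, and hence $\alpha=\beta=\gamma=0$; substituting into the displayed coproduct formula above the lemma yields exactly the stated expression. The residual nonprimitive pieces are then $[x_1x_2x_2]$, $[x_1x_1x_2x_2]$ and $x_2^{\,2}$. In the quotient $(\kX\#\k[\G])/(x_2^{\,2})$ the restricted $q$-Leibniz identity with $\ord q_{22}=2$ gives $[x_1x_2x_2]=[[x_1,x_2],x_2]=[x_1,x_2^{\,2}]=0$, and then $[x_1x_1x_2x_2]=[x_1,[x_1x_2x_2]]=0$, proving skew-primitivity.

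For consequence (2) the hypothesis $q_{12}q_{21}=q_{11}^{-2}$ kills $(1-q_{11}^2q_{12}q_{21})$, so $\alpha=\gamma=0$ while $\beta$ generally survives; plugging into the general formula gives the stated expression. In the quotient $(\kX\#\k[\G])/(x_2^{\,2},[x_1x_1x_1x_2])$ the $\beta$-term dies by the second relation and the $x_2^{\,2}$ and $[x_1x_1x_2x_2]$ terms die as in (1). In the quotient $(\kX\#\k[\G])/(x_1^{\,3},[x_1x_1x_2x_2])$ the $[x_1x_1x_2x_2]$ term is killed by the second relation; the $x_1^{\,3}g_2^{\,2}\otimes x_2^{\,2}$ term vanishes because $x_1^{\,3}=0$; and the restricted $q$-Leibniz identity with $\ord q_{11}=3$ gives $[x_1x_1x_1x_2]=[x_1,[x_1,[x_1,x_2]]]=[x_1^{\,3},x_2]=0$. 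The main technical obstacle is bookkeeping: one must check that every iterated commutator appearing in the residual terms is correctly identified as $[x_1^{r},x_2^{s}]$ via the restricted Leibniz formulas, so that the chosen relations in the quotient really kill them; beyond this the argument is a direct specialization.
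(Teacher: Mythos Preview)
Your approach is correct and matches the paper's: verify the factorizations of $\alpha,\beta,\gamma$ by direct algebra (the paper simply calls this a ``straightforward calculation''), then kill the residual nonprimitive terms in the quotient via the restricted $q$-Leibniz identities $[x_1x_2x_2]=[x_1,x_2^2]$ and $[x_1x_1x_1x_2]=[x_1^3,x_2]$, exactly as the paper does. One small point you skipped in (2): setting $\alpha=\gamma=0$ alone does not yet remove the $x_1^2 g_1g_2^2\otimes [x_1x_2x_2]$ term from the general formula; you also need $\ord q_{11}=3$ together with $q_{12}q_{21}=q_{11}^{-2}$ and $q_{22}=-1$ to see that its scalar coefficient $1+q_{11}-q_{11}^3q_{12}^2q_{21}^2q_{22}=1+q_{11}+q_{11}^2=(3)_{q_{11}}$ vanishes.
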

\begin{proof} This is also a straightforward calculation using the following identities: Since $q_{22}=-1$, we have $[x_1x_2x_2]=[x_1,x_2^2]$ by the restricted $q$-Leibniz formula of Proposition \ref{PropqCommut}, thus $[x_1x_1x_2x_2]=\bigl[x_1,[x_1x_2x_2]\bigr]=\bigl[x_1,[x_1,x_2^2]\bigr]$. So we see that both are zero if $x_2^2=0$.  If $\ord q_{11}=3$ analogously $[x_1x_1x_1x_2]=[x_1^3,x_2]=0$ for $x_1^3=0$.
\end{proof}

We want to state some basic combinatorics on the $g_i$'s and $\chi_i$'s for later reference:

\begin{lem}\label{LemQijCombinatorics}
 Let $1\le i\neq j\le\theta$, $1<N:=\ord q_{ii}<\infty$, and $r\in\Z$. Then:
\begin{enumerate}
\item[\emph{(1)}] $\chi_i^{N}\neq \chi_i$.
\item[\emph{(2)}] If $q_{jj}\neq 1$, then $\chi_i^{N}\neq \chi_j$ or $g_i^N\neq g_j$.
\item[\emph{(3)}] If $\chi_i^{N}= \varepsilon$, then $q_{ji}^N=1$. Especially, if $\chi_i^{2}= \varepsilon$, then $q_{ji}=\pm 1$.
\item[\emph{(4)}] If $q_{ij}q_{ji}=q_{ii}^{-(r-1)}$ and $q_{jj}\neq 1$, then $\chi_i^r\chi_j \neq \chi_i$. 
\item[\emph{(5)}] If $q_{ii}^r\neq 1$, then $\chi_i^r\chi_j \neq\chi_j$. 
\item[\emph{(6)}] If $q_{ij}q_{ji}=q_{ii}^{-(r-1)}$ and $\chi_i^r\chi_j =\varepsilon$, then
$$
\begin{pmatrix}
 q_{ii} & q_{ij}\\
q_{ji} & q_{jj}
\end{pmatrix}
=
\begin{pmatrix}
 q_{ii} & q_{ii}^{-r}\\
 q_{ii} & q_{ii}^{-r}
\end{pmatrix}.
$$
Especially, if $q_{jj}=-1$, then $q_{ii}^r=-1$ and $N$ is even.
\end{enumerate}
\end{lem}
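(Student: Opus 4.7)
The plan is that all six parts follow by evaluating the characters $\chi_i,\chi_j \in \Gh$ at the group elements $g_i,g_j \in \G$ and using the two fundamental identities $\chi_j(g_i)=q_{ij}$ and $\chi_i(g_i)=q_{ii}$ together with the standing hypothesis $1 < N = \ord q_{ii} < \infty$. The main obstacle is merely to keep the swap $\chi_i(g_j)=q_{ji}$ straight and to apply the cancellation $q_{ij}q_{ji}=q_{ii}^{-(r-1)}$ at the right moment; no genuine difficulty arises.

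For \emph{(1)}, I evaluate both sides at $g_i$: $\chi_i^N(g_i)=q_{ii}^N=1$ while $\chi_i(g_i)=q_{ii}\neq 1$ since $N>1$. For \emph{(2)}, if both $\chi_i^N=\chi_j$ and $g_i^N=g_j$ hold, then $q_{jj}=\chi_j(g_j)=\chi_i^N(g_i^N)=q_{ii}^{N^2}=1$, contradicting $q_{jj}\neq 1$. For \emph{(3)}, evaluating $\chi_i^N=\varepsilon$ at $g_j$ yields $q_{ji}^N=1$; the special case is immediate with $N=2$.

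For \emph{(4)}, I assume $\chi_i^r\chi_j=\chi_i$ and evaluate at $g_i$ to obtain $q_{ii}^r q_{ij}=q_{ii}$, hence $q_{ij}=q_{ii}^{1-r}$. Together with $q_{ij}q_{ji}=q_{ii}^{-(r-1)}=q_{ii}^{1-r}$ this forces $q_{ji}=1$. Evaluating at $g_j$ then gives $q_{ji}^r q_{jj}=q_{ji}$, so $q_{jj}=1$, a contradiction. For \emph{(5)}, if $\chi_i^r\chi_j=\chi_j$, then by cancellation in the character group $\chi_i^r=\varepsilon$, whence $q_{ii}^r=\chi_i^r(g_i)=1$, contradicting the hypothesis.

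Finally for \emph{(6)}, evaluating $\chi_i^r\chi_j=\varepsilon$ at $g_i$ and at $g_j$ gives $q_{ij}=q_{ii}^{-r}$ and $q_{jj}=q_{ji}^{-r}$. Plugging $q_{ij}=q_{ii}^{-r}$ into $q_{ij}q_{ji}=q_{ii}^{1-r}$ yields $q_{ji}=q_{ii}$, and then $q_{jj}=q_{ii}^{-r}$, which is exactly the displayed matrix. If moreover $q_{jj}=-1$, then $q_{ii}^r=-1$, so $q_{ii}^r$ has order $2$; this means $N\mid 2r$ but $N\nmid r$, forcing $N/\gcd(N,r)=2$ and hence $N$ even.
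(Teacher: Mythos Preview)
Your proof is correct and follows essentially the same approach as the paper: evaluating the characters at $g_i$ and $g_j$ and reading off constraints on the $q_{ij}$. In part (2) your single evaluation $q_{jj}=\chi_j(g_j)=\chi_i^N(g_i^N)=q_{ii}^{N^2}=1$ is actually a bit slicker than the paper's four-equation extraction, and in part (6) you spell out the ``$N$ even'' conclusion via $N/\gcd(N,r)=2$, which the paper leaves implicit.
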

\begin{proof}
(1) Assume $\chi_i^{N}= \chi_i$. Hence $q_{ii}^{N-1}=1$, a contradiction.\\
(2)  If  $\chi_i^{N}= \chi_j$ and $g_i^N=g_j$, then $1=q_{ii}^N=q_{ij}$, $q_{ji}^N=q_{jj}$, $1=q_{ii}^N=q_{ji}$ and $q_{ij}^N=q_{jj}$. Hence $q_{jj}=q_{ij}=q_{ji}=1$.\\ 
(3) is clear.\\
(4) If $\chi_i^r\chi_j =\chi_i$, then $q_{ii}^{r-1} q_{ij}=1$, $q_{ji}^{r-1} q_{jj}=1$. We deduce $q_{ji}=q_{jj}=1$.\\
(5) If $\chi_i^r\chi_j =\chi_j$, then $q_{ii}^r=1$.\\
(6) We have $q_{ii}^{r} q_{ij}=1$, $q_{ji}^{r} q_{jj}=1$. Now the assumption implies the claim.
\end{proof}


\section{Lifting}\label{SectLift}


We proceed as in \cite{AS-p3,AS-A2}: In this chapter let  $\operatorname{char}\k=0$ and  $A$  be a finite-dimensional pointed Hopf algebra with abelian group of group-like elements $G(A)=\G$ and assume  that the associated graded Hopf algebra with respect to the coradical filtration  is
$$
\operatorname{gr}(A)\cong \BV\#\k[\G],
$$ 
where $V$ is of diagonal type of dimension $\dim_\k V=\theta$ with basis $X=\{x_1,\ldots,x_\theta\}$. We will always identify $T(V)\cong\kX$. It is $\dim_\k A=\dim_\k \operatorname{gr}(A)=\dim_\k\BV \cdot |\G|$. In particular $\BV$ is finite-dimensional and we can associate a Cartan matrix as in Definition \ref{RemAssociateCartMatr}. 
\begin{defn}
 In this situation we say that $A$ is a \emph{lifting} of the Hopf algebra $\BV\#\k[\G]$, or simply of the Nichols algebra $\BV$.
\end{defn} 
By \cite[Lem.~5.4]{AS-p3}, we have that 
\begin{align}\label{LemTaftWilson}
\begin{split}
P_g^\varepsilon &=\k(1-g) \text{ for all }g\in\G,\text{ and if }\chi\neq\varepsilon, \text{ then}\\
P_g^\chi &\neq 0 \,\Longleftrightarrow\, g=g_i,\,\chi=\chi_i\text{  for some }1\le i\le\theta.
\end{split}
\end{align}
Thus we can choose $a_i\in P_{g_i}^{\chi_i}$ with residue class $x_i\in  V\#\k[\G]\cong A_1/A_0$ for $1\le i\le \theta$. 

\begin{lem}\label{LemQijCombinatorics2}
 Let  $w\in C(L)$ and $u\in D(L)$.
\begin{enumerate}
\item[\emph{(1)}] \emph{(a)} If $q_{i,w}\neq 1$ for some $1\le i\le\theta$, then $\chi_{w}\neq \varepsilon$.\\
           \emph{(b)} If $\chi_{w}\neq \varepsilon$ and for all $1\le i\le\theta$ there are $1\le j\le\theta$ such that $q_{j,w}\neq q_{ji}$ or $q_{w,j}\neq q_{ij}$,
then $$P_{g_{w}}^{\chi_{w}}=0.$$

\item[\emph{(2)}] Let $\ord q_{u,u}=N_u<\infty$.\\
           \emph{(a)} If $q_{i,u}^{N_u}\neq 1$ for some $1\le i\le\theta$, then $\chi_u^{N_u}\neq\varepsilon$.\\
          \emph{(b)} If $\chi_u^{N_u}\neq \varepsilon$ and for all $1\le i\le\theta$  there are $1\le j\le\theta$ such that
$q_{j,u}^{N_u}\neq q_{ji}$ or $q_{u,j}^{N_u}\neq q_{ij}$,
then $$P_{g_{u}^{N_u}}^{\chi_{u}^{N_u}}=0.$$
\end{enumerate}
\end{lem}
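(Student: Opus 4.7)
The plan is to reduce both items to the Taft--Wilson-type dichotomy recorded in Eq.~\eqref{LemTaftWilson} and to expand the characters via the definition $q_{a,b}=\chi_b(g_a)$ from Section~\ref{qCommutEx}. Since the statement has an obvious symmetry between (1) (working with the pair $(g_w,\chi_w)$) and (2) (working with $(g_u^{N_u},\chi_u^{N_u})$), I will sketch only (1) in detail; (2) then follows by the same argument after observing that $\chi_u^{N_u}(g_j)=\chi_u(g_j)^{N_u}=q_{j,u}^{N_u}$ and, symmetrically, $\chi_j(g_u^{N_u})=q_{u,j}^{N_u}$.

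Part (a) is simply the contrapositive of a trivial observation. If $\chi_w=\varepsilon$, then $q_{i,w}=\chi_w(g_i)=1$ for every $1\le i\le\theta$, so the existence of some $i$ with $q_{i,w}\neq 1$ forces $\chi_w\neq\varepsilon$. The analogous one-line computation with exponent $N_u$ gives (2)(a).

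For part (b) I would argue by contradiction. Suppose $P_{g_w}^{\chi_w}\neq 0$. Since $\chi_w\neq\varepsilon$ by hypothesis, the second clause of Eq.~\eqref{LemTaftWilson} produces an index $1\le i_0\le\theta$ with $g_w=g_{i_0}$ and $\chi_w=\chi_{i_0}$. Using these two equalities, for every $1\le j\le\theta$ one computes
\begin{align*}
q_{j,w}&=\chi_w(g_j)=\chi_{i_0}(g_j)=q_{j,i_0},\\
q_{w,j}&=\chi_j(g_w)=\chi_j(g_{i_0})=q_{i_0,j}.
\end{align*}
Applied to $i=i_0$, the hypothesis now delivers some $j$ with $q_{j,w}\neq q_{j,i_0}$ or $q_{w,j}\neq q_{i_0,j}$, contradicting the two displayed equalities. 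Hence $P_{g_w}^{\chi_w}=0$. Part (2)(b) is identical after the substitution mentioned above.

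I do not anticipate a substantial obstacle: the entire lemma is a direct consequence of the classification~\eqref{LemTaftWilson} together with the bicharacter definition. The only point requiring care is keeping track of which slot of $q_{\cdot,\cdot}$ records the $\G$-degree and which records the $\Gh$-degree, so that the two separate hypotheses on $q_{j,w}$ and $q_{w,j}$ are matched with the two equalities produced respectively by $g_w=g_{i_0}$ and $\chi_w=\chi_{i_0}$.
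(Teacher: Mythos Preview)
Your proposal is correct and follows essentially the same argument as the paper: contrapositive for (a), and for (b) a contradiction via Eq.~\eqref{LemTaftWilson} to obtain $g_w=g_{i_0}$, $\chi_w=\chi_{i_0}$ and then evaluate the bicharacter at $g_j$ to contradict the hypothesis. One cosmetic slip: in your closing paragraph the word ``respectively'' is swapped---the equality $q_{j,w}=q_{j,i_0}$ comes from $\chi_w=\chi_{i_0}$ and $q_{w,j}=q_{i_0,j}$ from $g_w=g_{i_0}$, as your own displayed lines already show correctly.
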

\begin{proof}
(1a) If $\chi_{w}=\varepsilon$, then $q_{i,w}=1$ for all $1\le i\le\theta$.\\
(1b) Let $\chi_{w}\neq \varepsilon$ and $P_{g_{w}}^{\chi_{w}}\neq 0$, then $\chi_{w}=\chi_i$ and $g_{w}=g_i$ for some $i$ by Eq.~\eqref{LemTaftWilson}. Hence $q_{j,w}=q_{ji}$ and $q_{w,j}=q_{ij}$ for all $1\le j\le\theta$.\\ 
(2a) If $\chi_u^{N_u}=\varepsilon$, then $q_{i,u}^{N_u}=1$ for all $1\le i\le\theta$.\\
(2b) Let $\chi_{u}^{N_u}\neq \varepsilon$ and $P_{g_{u}^{N_u}}^{\chi_{u}^{N_u}}\neq 0$, then $\chi_u^{N_u}=\chi_i$ and $g_{u}^{N_u}=g_i$ for some $i$. Thus $q_{j,u}^{N_u}=q_{ji}$ and $q_{u,j}^{N_u}=q_{ij}$ for all $1\le j\le\theta$. 
\end{proof}
This and Eq.~\eqref{LemTaftWilson} motivate the following:

\begin{defn}\label{DefnLiftCoeffic} Let $L\subset\Ly$.
Then we define coefficients
 $\mu_u\in\k$ for all $u\in D(L)$, and $\lambda_{w}\in \k$ for all $w\in C(L)$ by 
\begin{align*}
\mu_u=0,\text{ if } g_u^{N_u}=1\text{ or }\chi_u^{N_u}\neq \varepsilon, \qquad\quad
\lambda_{w}=0,\text{ if } g_{w}=1 \text{ or }\chi_{w}\neq \varepsilon,
\end{align*}
and otherwise they can be chosen arbitrarily.
\end{defn}

\subsection{General lifting procedure}\label{SectGeneralLiftProced} 

Suppose we know the PBW basis $[L]$ of $\BV$, then a lifting $A$ has the same PBW basis $[L]$; see \cite[Prop.~47]{UferPBW}. Hence we know by Section \ref{SectCharHA} the structure of the ideal $I$ such that 
$$
A\cong (\kX\# \k[\G])/I.
$$ 
Let us order the relations Eqs.~\eqref{ThmPBWCritIdluv} and \eqref{ThmPBWCritIdluN} of $I$, namely the two types
$$
[w]-\red{w}\text{ for $w\in C(L)$}\quad\text{and}\quad [u]^{N_u}-\redh{u}\text{ for }u\in D(L),
$$ 
with respect to $\prec$ by the leading super word $[w]$ resp.~$[u]^{N_u}$.
Yet we don't know the $\red{w},\redh{u}\in \kX\# \k[\G]$ explicitly; our general procedure to compute these elements is the following, stated inductively on $\prec$:

\begin{itemize}
 \item Suppose we know all relations $\prec$-smaller than $[w]$ resp.~$[u]^{N_u}$. 
\item Then we determine a counterterm $r_{w}$ resp.~$s_u\in \kX\# \k[\G]$ such that
$$
[w]-r_{w}\in P^{\chi_{w}}_{g_{w}}\quad\text{resp.}\quad  [u]^{N_u}-s_u\in P^{\chi_{u}^{N_u}}_{g_{u}^{N_u}}
$$
modulo the relations $\prec$-smaller than $[w]$ resp.~$[u]^{N_u}$; we conjecture that we can do this in general (see below).\\
Further if $\chi_{w}\neq \chi_i$ or $g_{w}\neq g_i$ resp.~$\chi_{u}^{N_u}\neq \chi_i$ or $g_{u}^{N_u} \neq g_i$ for all $1\le i\le\theta$, then by Eq.~\eqref{LemTaftWilson} we get
{\fboxsep=0.1in
\begin{align}\framebox{$
\red{w}=r_{w}+\lambda_{w}(1-g_{w})\quad\text{resp.}\quad \redh{u}=s_{u}+\mu_{u}(1-g_u^{N_u}).
$}
\end{align}
}
\end{itemize}

In order to formulate our conjecture, we define the following ideal: For any super word $U\in \SupW$ let $I_{U}$ denote the ideal of $\kX\#\k[\G]$ generated by the elements 
\begin{align*}
[w]         - \red{w}&    &&\text{for all } w\in C(L) \text{ and }[w]\prec U,\\
 [u]^{N_u}   - \redh{u}&    &&\text{for all }  u\in D(L) \text{ and }[u]^{N_u}\prec U.
\end{align*}
Note that $I_U\subset I$. 
\begin{conj}\label{ConjSkewPrim}
For all $w\in C(L)$ resp.~for all $u\in D(L)$ there are $r_{w}\in (\kX\# \k[\G])^{\chi_{w}}$ resp.~$s_u\in (\kX\# \k[\G])^{\chi_u^{N_u}}$ with  $r_{w}\prec_{L} [w]$ resp.~$s_u\prec_{L} [u]^{N_u}$  such that $[w]-r_{w}$ resp.~$[u]^{N_u}-s_u$ is skew-primitive modulo the relations $\prec$-smaller than $[w]$ resp.~$[u]^{N_u}$, i.e.,
\begin{align*}
\Delta([w]-r_{w})-([w]-r_{w})\o 1 -& g_{w}\o ([w]-r_{w}) \\ &\in  \kX\# \k[\G]\o I_{[w]} + I_{[w]}\o \kX\# \k[\G],\\
\Delta([u]^{N_u}-s_u)-([u]^{N_u}-s_u)\o 1 -& g_u^{N_u}\o ([u]^{N_u}-s_u)\\&\in  \kX\# \k[\G]\o I_{[u]^{N_u}} + I_{[u]^{N_u}}\o \kX\# \k[\G].
\end{align*}
\end{conj}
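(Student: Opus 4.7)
The plan is to argue by strong induction on the well-founded order $\prec$ from Section \ref{SectWellFoundOrder}. Fix $w \in C(L)$ (the case $u \in D(L)$ with $[u]^{N_u}$ in place of $[w]$ is parallel). By induction, assume that $r_{w'}$ and $s_{u'}$ have been constructed for all $w', u'$ with $[w'], [u']^{N_{u'}} \prec [w]$, so that $I_{[w]}$ is already well-defined.

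The first step is to compute $\Delta([w])$ explicitly in $\kX\#\k[\G]$. Using the $q$-derivation formulas of Proposition \ref{PropqCommut} applied to the Shirshov decomposition $\Sh{w}{v}{v'}$, one develops inductively on $\ell(w)$ a formula
\[
\Delta([w]) = [w] \o 1 + g_w \o [w] + \Sigma,
\]
where $\Sigma$ is a $(\chi_w, g_w)$-homogeneous sum of tensors $a \o b$ with $1 \le \ell(a), \ell(b) < \ell(w)$; Lemmas \ref{LemSkewZ} and \ref{LemSkew11212} are the instances of this computation that will be used most often.

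The second step exploits that in the Nichols algebra $\BV$ the super letter $[w]$ equals some element $\rho \prec_L [w]$ (this is exactly the Nichols-algebra relation associated to $w \in C(L)$, see \cite{Helbig-Presentation}). Applying $\Delta$ to the identity $[w] = \rho$ in $\BV$ transfers the skew-primitive correction: the projection $\Sigma_{\BV}$ of $\Sigma$ to $\BV\#\k[\G] \o \BV\#\k[\G]$ takes the coboundary form $\Delta_{\BV}(\rho) - \rho \o 1 - g_w \o \rho$. I then construct $r_w \in (\kX\#\k[\G])^{\chi_w}$ by lifting $\rho$ to $\kX\#\k[\G]$ and successively replacing each $\prec$-smaller super letter that triggers a relation from $I_{[w]}$ by the inductive counterterm $r_{w'}$ or $s_{u'}$. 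By the induction hypothesis, every tensor factor in $\Sigma - (\Delta(r_w) - r_w \o 1 - g_w \o r_w)$ differs from a genuine skew-primitive element by something in $I_{[w]}$, and one absorbs this difference into $\kX\#\k[\G] \o I_{[w]} + I_{[w]} \o \kX\#\k[\G]$.

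The main obstacle is to guarantee that the residue obtained at the end of the substitution really lies in $\kX\#\k[\G] \o I_{[w]} + I_{[w]} \o \kX\#\k[\G]$, with no non-trivial two-cocycle obstruction left behind. Abstractly this is a vanishing statement for the Hochschild coalgebra cohomology of the partial quotient $\kX\#\k[\G]/I_{[w]}$ in the bidegree $(\chi_w, g_w)$. I do not see a uniform way to prove this in the full diagonal setting, which is why the statement remains a conjecture. For the particular Weyl classes treated in the paper (types $A_2$, $B_2$, and rows 8--9 of \cite[Figure 1]{HeckWeylEquiv,HeckArtihRootSystRank2}), however, the combinatorial constraints on the characters recorded in Lemma \ref{LemQijCombinatorics}, combined with the explicit coproduct formulas of Section \ref{SectCoproducts}, obstruct any such two-cocycle, and the conjecture holds there by a case-by-case verification.
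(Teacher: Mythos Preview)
The statement is labeled \emph{Conjecture} in the paper and is not proved there in general; the paper only verifies it a posteriori for the specific families treated in Theorems \ref{ExLiftingsA2}, \ref{ExLiftingsB2}, and \ref{ThmLiftRow8}, by exhibiting the counterterms $r_w$, $s_u$ explicitly and checking skew-primitivity directly (see the Remarks following each theorem). There is therefore no ``paper's own proof'' to compare against.

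Your proposal is not a proof either, and you acknowledge this yourself in the third paragraph. What you have written is a plausible strategy (induction on $\prec$, compute $\Delta([w])$, try to absorb the cross-terms using the inductively known relations) together with an honest identification of the obstruction: the vanishing of a certain coalgebra-cohomology class in the partial quotient. That obstruction is precisely the content of the conjecture, so your outline does not reduce it to anything easier. A couple of technical points in your sketch are also imprecise: in step two you appeal to the Nichols-algebra relation $[w]=\rho$, but the counterterms $r_w$, $s_u$ in the conjecture depend on the lifting parameters $\mu_u$, $\lambda_w$ (cf.\ (2c), (3c) of Theorem \ref{ExLiftingsB2}, where $s_{12}$, $s_{112}\notin\k[\G]$), so working purely from the graded object cannot produce them; and the phrase ``successively replacing each $\prec$-smaller super letter \ldots\ by the inductive counterterm'' is not a well-defined construction, since the inductive data are relations in an ideal, not substitution rules for super letters.

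In short: your final paragraph is the accurate summary. The conjecture is open in general, and both you and the paper establish it only by the same case-by-case verification for the braidings actually under consideration.
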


\begin{rem}\ 
\begin{enumerate}
\item  If the conjecture is true, then one could investigate from the list of braidings in \cite{HeckClassArtihRootSyst} where a free paramter $\lambda_{w}$ resp.~$\mu_u$ occurs in the lifting, without knowing $r_{w}$ resp.~$s_u$ explicitly.

\item To determine the generators of the ideal $I$ explicitly, i.e., to find $r_{w}$ resp.~$s_u$, 
it is crucial
to know which relations of $I$ are redundant. The redundant relations are detected as described in \cite{Helbig-PBW}.

\item In general $r_{w}$ resp.~$s_u$ is not necessarily in $\k[\G]$, like it was the case in \cite{AS-Class}; see Lemma \ref{LemLiftofGeneralRel} (2b),(3b) below or the liftings in the following sections.
\end{enumerate}
\end{rem}

At first we  lift the root vector relations of $x_1,\ldots,x_\theta$ and the Serre relations in general. Note that for these relations our Conjecture \ref{ConjSkewPrim} is true. We denote the images of $[x_i^rx_j],[x_ix_j^r]\in\kX$ ($r\ge 1$) of the algebra map $\kX\rightarrow A,\quad x_i\mapsto a_i$ by $[a_i^ra_j],[a_ia_j^r]$:
\begin{lem} \label{LemLiftofGeneralRel}
Let $A$ be a lifting of $\BV$ with braiding matrix $(q_{ij})$ and Cartan matrix $(a_{ij})$. Further let $1\le i<j\le\theta$ and $N_i:=\ord q_{ii}$. We may assume $q_{ii}\neq 1$ for all $1\le i\le\theta$. 
\begin{enumerate}
 \item[$(1)$] We have 
$$
a_i^{N_i}=\mu_i(1-g_i^{N_i}).
$$ Moreover, if $q_{ji}^{N_i}\neq 1$, then  $a_i^{N_i}=0$. Especially, if $q_{ii}=-1$ and $q_{ji}\neq\pm 1$, then $a_i^2=0$.
 \item[$(2)$] \emph{(a)} If $q_{ij}q_{ji}=q_{ii}^{a_{ij}}$, $N_i> 1-a_{ij}$, then 
$$
\bigl[a_i^{1-a_{ij}} a_j\bigr] =\lambda_{i^{1-a_{ij}}j} (1-g_{i}^{1-a_{ij}}g_j).
$$
Moreover,  if $
\begin{pmatrix}
 q_{ii} & q_{ij}\\
q_{ji} & q_{jj}
\end{pmatrix}
\neq 
\begin{pmatrix}
 q_{ii} & q_{ii}^{-(1-a_{ij})}\\
 q_{ii} & q_{ii}^{-(1-a_{ij})}
\end{pmatrix}
$, then $[a_i^{1-a_{ij}} a_j]=0$; in particular the latter claim holds  if $q_{jj}=-1$ and $q_{ii}^{1-a_{ij}}\neq -1$ (e.g., $N_i$ is odd).\\
\emph{(b)} If $N_i= 1-a_{ij} $, then $$\bigl[a_i^{N_i} a_j\bigr]=\mu_i(1-q_{ij}^{N_i})a_j.$$

 \item[$(3)$] \emph{(a)} If $q_{ij}q_{ji}=q_{jj}^{a_{ji}}$, $N_j> 1-a_{ji}$, then 
$$
\bigl[a_i a_j^{1-a_{ji}}\bigr]=\lambda_{ij^{1-a_{ji}}}(1-g_{i}g_j^{1-a_{ji}}).
$$
Moreover,  if $
\begin{pmatrix}
 q_{ii} & q_{ij}\\
q_{ji} & q_{jj}
\end{pmatrix}
\neq 
\begin{pmatrix}
 q_{jj}^{-(1-a_{ji})} & q_{jj}\\
 q_{jj}^{-(1-a_{ji})} & q_{jj}
\end{pmatrix}
$, then $[a_i a_j^{1-a_{ji}}]=0$; in particular the latter claim holds  if $q_{ii}=-1$ and $q_{jj}^{1-a_{ji}}\neq -1$ (e.g., $N_j$ is odd). \\ 
\emph{(b)} If  $N_j=1-a_{ji}$, then $$\bigl[a_i a_j^{N_j}\bigr]=\mu_j(q_{ji}^{N_j}-1)a_ig_j^{N_j}.$$
\end{enumerate}
\end{lem}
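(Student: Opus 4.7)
The overall strategy in all three parts is the same Takeuchi-style argument: exhibit each candidate relation as a $(1,g)$-skew-primitive element of $A$ lying in a specific character component $A^{\chi}$, apply the structure theorem for skew-primitives Eq.~\eqref{LemTaftWilson}, and then use Lemma~\ref{LemQijCombinatorics} to rule out that the element picks up a summand proportional to some generator $a_\ell$.

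For Part~(1), since $a_i\in P_{g_i}^{\chi_i}$, the $q$-binomial theorem Eq.~\eqref{qBinomThm} applied to the $q_{ii}$-commuting tensors $g_i\otimes a_i$ and $a_i\otimes 1$ gives
$$\Delta(a_i^{N_i})=\sum_{k=0}^{N_i}\tbinom{N_i}{k}_{q_{ii}}a_i^{k}g_i^{N_i-k}\otimes a_i^{N_i-k}=a_i^{N_i}\otimes 1+g_i^{N_i}\otimes a_i^{N_i},$$
where the middle coefficients vanish by Eq.~\eqref{qBinomCoeffZero}. Hence $a_i^{N_i}\in P_{g_i^{N_i}}^{\chi_i^{N_i}}$. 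Items~(1) and (2) of Lemma~\ref{LemQijCombinatorics}, together with the standing assumption $q_{\ell\ell}\neq 1$, exclude $(\chi_i^{N_i},g_i^{N_i})=(\chi_\ell,g_\ell)$ for every $\ell$, so Eq.~\eqref{LemTaftWilson} leaves only the options $a_i^{N_i}\in\k(1-g_i^{N_i})$ if $\chi_i^{N_i}=\varepsilon$ and $a_i^{N_i}=0$ otherwise---both subsumed under $a_i^{N_i}=\mu_i(1-g_i^{N_i})$ with $\mu_i$ as in Definition~\ref{DefnLiftCoeffic}. The ``moreover'' clause follows because $q_{ji}^{N_i}=\chi_i^{N_i}(g_j)\neq 1$ for some $j$ forces $\chi_i^{N_i}\neq\varepsilon$, hence $\mu_i=0$.

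For Part~(2a), the coproduct computation proving Lemma~\ref{LemSkewPrimRootSerre}(2) in $\kX\#\k[\G]$ uses only $\Delta(x_i)=x_i\otimes 1+g_i\otimes x_i$ and the Cartan-type identity $q_{ij}q_{ji}=q_{ii}^{-(1-a_{ij})}$; the same calculation in $A$ shows $[a_i^{1-a_{ij}}a_j]\in P_{g_i^{1-a_{ij}}g_j}^{\chi_i^{1-a_{ij}}\chi_j}$. Invoking Eq.~\eqref{LemTaftWilson} I split cases: the trivial character case yields the $\lambda_{w}(1-g_w)$-form, whereas a non-trivial character would a priori admit a summand proportional to some $a_\ell$. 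Lemma~\ref{LemQijCombinatorics}(4) (using $q_{jj}\neq 1$) excludes $\ell=i$; Lemma~\ref{LemQijCombinatorics}(5) (using $q_{ii}^{1-a_{ij}}\neq 1$, which holds since $1-a_{ij}<N_i$) excludes $\ell=j$; for remaining $\ell$ one compares images in $\operatorname{gr}(A)$, noting that $[a_i^{1-a_{ij}}a_j]$ projects to $[x_i^{1-a_{ij}}x_j]=0$ by the Serre relation in $\BV$, which is incompatible with a nonzero multiple of $a_\ell$. Lemma~\ref{LemQijCombinatorics}(6) is precisely the criterion for $\chi_i^{1-a_{ij}}\chi_j=\varepsilon$, and its contrapositive gives the forbidden-matrix clause; the special case $q_{jj}=-1$ with $q_{ii}^{1-a_{ij}}\neq -1$ violates that matrix condition directly. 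For Part~(2b), substituting the already-proved identity $a_i^{N_i}=\mu_i(1-g_i^{N_i})$ into $[a_i^{N_i},a_j]=a_i^{N_i}a_j-q_{ij}^{N_i}a_ja_i^{N_i}$ and simplifying via $g_i^{N_i}a_j=q_{ij}^{N_i}a_jg_i^{N_i}$ collapses everything to $\mu_i(1-q_{ij}^{N_i})a_j$ in two lines.

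Part~(3) is the mirror image of Part~(2) under $i\leftrightarrow j$, using Lemma~\ref{LemSkewPrimRootSerre}(3) and the symmetric items of Lemma~\ref{LemQijCombinatorics}. The most delicate step I foresee is the exclusion of a summand $\alpha a_\ell$ with $\ell\notin\{i,j\}$ in Parts~(2a)/(3a): here a pure character-theoretic argument is insufficient, and one must exploit that $[a_i^{1-a_{ij}}a_j]$, being skew-primitive, lives in coradical-filtration degree $1$ while its natural product expansion places it in degree $2-a_{ij}$, so the vanishing of the corresponding Serre element in $\BV$ is what ultimately kills the unwanted $a_\ell$-contribution through the bi-graded structure of $\operatorname{gr}(A)\cong\BV\#\k[\G]$.
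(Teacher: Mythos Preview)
Your overall strategy---skew-primitivity via Lemma~\ref{LemSkewPrimRootSerre}, then Eq.~\eqref{LemTaftWilson}, then the character combinatorics of Lemma~\ref{LemQijCombinatorics}---is exactly the paper's. The paper's proof is a terse pointer to Lemma~\ref{LemQijCombinatorics}(1)--(3) for part~(1), to items~(4)--(6) for~(2a)/(3a), and to the restricted $q$-Leibniz formula plus part~(1) for~(2b)/(3b). One small omission: in~(2b) you pass directly from the super letter $[a_i^{N_i}a_j]$ (the iterated commutator $[a_i,[a_i,\ldots,[a_i,a_j]\ldots]]$) to the single bracket $a_i^{N_i}a_j-q_{ij}^{N_i}a_ja_i^{N_i}$; this identification is precisely the restricted $q$-Leibniz formula of Proposition~\ref{PropqCommut}(4), which the paper cites and you should too.

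Your treatment of the case $\ell\notin\{i,j\}$ in~(2a)/(3a), however, does not work. You argue that skew-primitivity places $[a_i^{1-a_{ij}}a_j]$ in coradical-filtration degree~$1$, while the vanishing of the Serre element in $\BV$ shows its image in $\operatorname{gr}(A)_{2-a_{ij}}$ is zero. But the second fact only says $[a_i^{1-a_{ij}}a_j]\in A_{1-a_{ij}}$, which is already implied by the first since $A_1\subset A_{1-a_{ij}}$; you gain nothing. A nonzero multiple $\alpha a_\ell$ also lies in $A_1$, with nonzero image $\alpha x_\ell$ in $A_1/A_0$, and you have no independent computation of the image of $[a_i^{1-a_{ij}}a_j]$ in $A_1/A_0$ to compare against. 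The ``bi-graded'' picture does not rescue this: the $\N^\theta$-grading lives on $\operatorname{gr}(A)$, not on $A$, and the passage $A\to\operatorname{gr}(A)$ is not multiplicative across filtration degrees. In fact, for $\theta\ge 3$ with a coincidence $(g_i^{1-a_{ij}}g_j,\chi_i^{1-a_{ij}}\chi_j)=(g_\ell,\chi_\ell)$, a lifting with $[a_i^{1-a_{ij}}a_j]=\alpha a_\ell$, $\alpha\neq 0$, is not excluded by anything you wrote. The paper simply does not address this case: Lemma~\ref{LemQijCombinatorics}(4)--(5) only rule out $\ell=i$ and $\ell=j$, and immediately after the lemma the paper sets $\theta=2$, where no other $\ell$ exists. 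If you want the lemma for general~$\theta$, you must add the hypothesis that $(g_i^{1-a_{ij}}g_j,\chi_i^{1-a_{ij}}\chi_j)\neq(g_\ell,\chi_\ell)$ for all $\ell$, or restrict to $\theta=2$ as the paper effectively does.
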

\begin{proof}
(1) This is a consequence of Lemma \ref{LemQijCombinatorics}(1)-(3) and Eq.~\eqref{LemTaftWilson}.

(2a) and (3a) follow from Lemma \ref{LemQijCombinatorics}(4)-(6) and Eq.~\eqref{LemTaftWilson}.

(2b) and (3b) follow from the restricted $q$-Leibniz formula of Proposition \ref{PropqCommut} and (1) above: For example
\begin{multline*}
 \bigl[a_i a_j^{N_j}\bigr]=\bigl[a_i, a_j^{N_j}\bigr]_{q_{ij}^{N_j}}=\bigl[a_i, \mu_j(1-g_j^{N_j})\bigr]_{q_{ij}^{N_j}}
 =\mu_j\Bigl((1-q_{ij}^{N_j})a_i -(1-q_{ij}^{N_j}q_{ji}^{N_j})a_ig_j^{N_j}\Bigr).
\end{multline*}
Now either $\mu_j=0$ or $q_{ij}^{N_j}=1$ by (1), from where the claim follows.
\end{proof}

From now on let $\theta =2$, i.e., $\BV$ is of rank 2.

\subsection{Lifting of $\BV$ with Cartan matrix $A_1\times A_1$}\label{LiftofNichAlgCartMatrA1xA1}
Let $\BV$ be a finite-dimensional Nichols algebras with Cartan matrix 
$(a_{ij})=\left(\begin{smallmatrix}  2 & 0\\ 0 & 2 \end{smallmatrix}\right)$ of type $A_1\times A_1$, i.e., the braiding matrix $(q_{ij})$ fulfills 
$$
q_{12}q_{21}=1,
$$
since we may suppose that $\ord q_{ii}\ge 2$ \cite[Sect.~2]{HeckWeylEquiv}, especially $q_{ii}\neq 1$. The Dynkin diagram is \rule[-3\unitlength]{0pt}{8\unitlength}
 \begin{picture}(14,5)(0,3)
 \put(1,2){\circle{2}}
 \put(13,2){\circle{2}}
 \put(1,5){\makebox[0pt]{\scriptsize $q$}}
 \put(13,5){\makebox[0pt]{\scriptsize $r$}}
 \end{picture} with $q:=q_{11}$ and $r:=q_{22}$. Then the Nichols algebra is given by
$$
\BV =T(V)/\bigl([x_1x_2],\, x_1^{N_1},\, x_2^{N_2}\bigr)
$$
with basis $\{x_2^{r_2} x_1^{r_1} \;|\;  0\le r_i< N_i \}$ where $N_i=\ord q_{ii}\ge 2$ \cite{HeckArtihRootSystRank2}.
It is well-known \cite{AS-p3} that any lifting $A$ is of the form
\begin{align*}
 A\cong (T(V)\#\k[\G])/\bigl(\quad [x_1x_2]-\lambda_{12} (1-g_{12}),\ x_1^{N_1}&-\mu_1(1-g_1^{N_1}),\\
                                  x_2^{N_2}&-\mu_2(1-g_2^{N_2})\quad\bigr)
\end{align*}
with basis $\{x_2^{r_2} x_1^{r_1} g \;|\;  0\le r_i< N_i,g\in\G \}$ and $\operatorname{dim}_\k A=N_1N_2\cdot |\G|$; the statement for the basis is proven in \cite{Helbig-PBW}.

\subsection{Lifting of $\BV$ with Cartan matrix $A_2$}\label{LiftofNichAlgCartMatrA2}
Let $\BV$ be a Nichols algebras with Cartan matrix 
$(a_{ij})=\left(\begin{smallmatrix}   2 & -1\\-1& 2    \end{smallmatrix}\right) $ 
of type $A_2$, i.e., the braiding matrix $(q_{ij})$ fulfills
$$
q_{12}q_{21}=q_{11}^{-1}\text{ or } q_{11}=-1,\text{ and } q_{12}q_{21}=q_{22}^{-1}\text{ or }q_{22}=-1.
$$
The Nichols algebras are given explicitly in \cite{HeckBV1}. As mentioned above, it is crucial to know the redundant relations for the computation of the liftings. Therefore we give the ideals without redundant relations which are detected in \cite{Helbig-PBW}:

\begin{prop}[Nichols algebras with Cartan matrix $A_2$]\label{ExNicholsAlgfinDimA2}  The finite-dimensional Nichols algebras $\BV$ with Cartan matrix of type $A_2$ are exactly the following:
\begin{enumerate}
\item[\emph{(1)}] \Dchaintwo{}{$q$}{$q^{-1}$}{$q$} (Cartan type $A_2$). Let $q_{12}q_{21}=q_{11}^{-1}=q_{22}^{-1}$ .\\
\emph{(a)} If $q_{11}=-1$, then 
$$ \BV=T(V)/ \bigl(x_1^{2},\ [x_1x_2]^{2},\ x_2^{2}\bigr) $$ 
with basis $\{x_2^{r_2}[x_1x_2]^{r_{12}} x_1^{r_1}\;|\; 0\le r_2,r_{12},r_1< 2\}$ and $\operatorname{dim}_\k \BV=2^3=8$.\\
\emph{(b)}
If $N:=\ord q_{11}\ge 3$, then 
$$ \qquad\qquad\BV=T(V)/ \bigl([x_1x_1x_2],\ [x_1x_2x_2],\ x_1^{N},\ [x_1x_2]^{N},\ x_2^{N}\bigr) $$ 
with basis $\{x_2^{r_2}[x_1x_2]^{r_{12}} x_1^{r_1}\;|\; 0\le r_2,r_{12},r_1< N\}$ and $\operatorname{dim}_\k \BV =N^3$.

\item[\emph{(2)}] \Dchaintwo{}{$q$}{$q^{-1}$}{$-1$}. If $q_{12}q_{21}=q_{11}^{-1}$, $N:=\ord q_{11}\ge 3$, $q_{22}=-1$, then 
$$  \BV=T(V)/ \bigl( [x_1x_1x_2],\  x_1^{N},\ x_2^2 \bigr)$$ 
with basis $\{x_2^{r_2}[x_1x_2]^{r_{12}} x_1^{r_1}\;|\;  0\le r_1< N,\,0 \le r_2,r_{12}< 2\}$ and $\operatorname{dim}_\k \BV=4N$.

\item[\emph{(3)}] \Dchaintwo{}{$-1$}{$q^{-1}$}{$q$}. If $q_{11}=-1$, $q_{12}q_{21}=q_{22}^{-1}$, $N:=\ord q_{22}\ge 3$, then
$$   \BV=T(V)/ \bigl([x_1x_2x_2],\ x_1^2,\ x_2^{N}\bigr)$$
with basis $\{x_2^{r_2}[x_1x_2]^{r_{12}} x_1^{r_1}\;|\;  0\le r_2< N,\,0 \le r_1,r_{12}< 2\}$ and $\operatorname{dim}_\k \BV=4N$.

\item[\emph{(4)}] \Dchaintwo{}{$-1$}{$q$}{$-1$}. If $q_{11}=q_{22}=-1$, $N:=\ord q_{12}q_{21}\ge 3$, then 
$$   \BV=T(V)/ \bigl( x_1^{2},\ [x_1x_2]^{N},\ x_2^{2}\bigr) $$ 
with basis $\{x_2^{r_2}[x_1x_2]^{r_{12}} x_1^{r_1}\;|\;  0\le r_2,r_1< 2,\,0 \le r_{12}< N\}$  and $\operatorname{dim}_\k \BV=4N$.
\end{enumerate}
\end{prop}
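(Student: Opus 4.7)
The plan is to combine Heckenberger's determination of the defining ideals of rank-2 Nichols algebras from \cite{HeckBV1,HeckArtihRootSystRank2} with the PBW framework of Section \ref{SectCharHA} and the coproduct calculations of Section \ref{SectCoproducts}, so as to exhibit a \emph{minimal}, non-redundant set of relations in each of the four cases.

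First I would fix the Shirshov-closed subset $L = \{x_1, x_1 x_2, x_2\} \subset \Ly$, corresponding to the three positive roots of $A_2$. By the PBW theorem for character Hopf algebras (Theorem \ref{PropIdealCharHopfAlg} together with \cite{KhPBW}), the set $\SupLL$ of super letters generates a PBW basis of $\BV$, so the stated bases and dimensions are determined by the truncation orders $N_u$ attached to the three super letters $[x_1], [x_1 x_2], [x_2]$. Using the Cartan conditions $q_{12} q_{21} \in \{q_{11}^{-1}, q_{22}^{-1}\}$ and the identity $q_{[x_1 x_2],[x_1 x_2]} = q_{11} q_{12} q_{21} q_{22}$, a direct computation gives that $(N_{x_1}, N_{x_1 x_2}, N_{x_2})$ equals $(N,N,N)$ in case (1), $(N,2,2)$ in case (2), $(2,2,N)$ in case (3), and $(2,N,2)$ in case (4), matching the truncation relations and basis sizes listed in the statement.

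Next I would enumerate the a priori necessary relations from Section \ref{SectIdealOfCharHA}. The set $C(L) = \{x_1 x_1 x_2,\, x_1 x_2 x_2\}$ contributes the two Serre-type relations $[x_1 x_1 x_2]$ and $[x_1 x_2 x_2]$, while $D(L)$ contributes the root vector relations $x_i^{N_i}$ and $[x_1 x_2]^{N_{x_1 x_2}}$. Heckenberger's classification \cite{HeckBV1} guarantees that these generate the defining ideal of $\BV$; it remains to decide which of them are redundant. The core observation is a combination of the restricted $q$-Leibniz formula of Proposition \ref{PropqCommut}(4) and the coproduct lemmas of Section \ref{SectCoproducts}. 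Whenever $q_{ii} = -1$, one has $[x_i^2, x_j] = [x_i,[x_i,x_j]] = [x_i x_i x_j]$, so modulo $x_i^2$ the Serre relation $[x_i x_i x_j]$ disappears; the symmetric statement kills $[x_i x_j x_j]$ modulo $x_j^2$. This eliminates both Serre relations in cases (1a) and (4), and precisely one of them in cases (2) and (3), while leaving both intact in case (1b).

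The only remaining subtlety is the root vector relation $[x_1 x_2]^2$ in case (2) (and its mirror in case (3)). Here $q_{22} = -1$ and $q_{[x_1 x_2],[x_1 x_2]} = -1$, so Lemma \ref{LemSkewZ}(1) with $N = 2$ shows that, in the quotient of $\kX \# \k[\G]$ by $x_2^2$ and $[x_1 x_1 x_2]$, the element $[x_1 x_2]^2$ lies in $P_{g_{12}^2}^{\chi_{12}^2}$. Since these two relations already hold in $\BV$, that quotient surjects onto $\BV \# \k[\G]$ as Hopf algebras, and because $P(\BV) = V$ is concentrated in degree one by the very definition of a Nichols algebra, the degree-$4$ element $[x_1 x_2]^2$ must vanish in $\BV$, proving redundancy. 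Case (3) is entirely symmetric, exchanging the roles of $x_1$ and $x_2$.

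The main obstacle is the last step: one has to verify that the coproduct computations of Lemma \ref{LemSkewZ} really take place in a Hopf algebra quotient whose surjection onto $\BV \# \k[\G]$ sends primitives to primitives, so that the Nichols algebra property $P(\BV) = V$ can be applied. This requires checking that the relations invoked at each stage are $\Gh$-homogeneous and coideal-generating in $\kX \# \k[\G]$ (which they are, by Lemma \ref{LemSkewPrimRootSerre} and Lemma \ref{LemQijCombinatorics}). Once the minimality of each relation set is established, the bases and dimensions follow at once from the PBW theorem of Section \ref{SectCharHA} applied to $L$, and a direct comparison with Heckenberger's Hilbert series in \cite{HeckBV1} confirms that no further relations are needed.
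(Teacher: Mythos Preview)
The paper does not actually prove this proposition. The text immediately preceding it says that the Nichols algebras ``are given explicitly in \cite{HeckBV1}'' and that the ideals are presented ``without redundant relations which are detected in \cite{Helbig-PBW}''; the proposition is then stated as a summary of those two references. In particular, the basis and dimension claims (and hence the sufficiency of each relation set) are established in \cite{Helbig-PBW} via a direct PBW-type criterion, not by coproduct computations.

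Your approach is correct and supplies the redundancy arguments that the paper outsources. The use of the restricted $q$-Leibniz identity to absorb the Serre relations into $x_i^2$ when $q_{ii}=-1$ is exactly the right mechanism (the paper uses the same trick elsewhere, e.g.\ in the proof of Lemma~\ref{LemSkew11212}). Your treatment of $[x_1x_2]^2$ in case~(2) via Lemma~\ref{LemSkewZ}(1) together with the Nichols axiom $P(\BV)=V$ is valid, but note that it only shows $[x_1x_2]^2=0$ \emph{in $\BV$}, not that this element lies in the ideal of $T(V)$ generated by the three remaining relations. For that you still need the dimension comparison you sketch at the end: the surjection from $T(V)/([x_1x_1x_2],\,x_1^N,\,x_2^2)$ onto $\BV$ is an isomorphism because the domain is spanned by the $4N$ PBW monomials. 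It is precisely this spanning statement---which you pass over as ``follows at once from the PBW theorem''---that carries the logical weight and that \cite{Helbig-PBW} proves by a diamond-lemma style argument. So your primitivity computation for $[x_1x_2]^2$ is pleasant confirmation but becomes superfluous once the dimension count is in hand; conversely, without an independent bound on the dimension of the quotient, the argument is incomplete.
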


\begin{rem}
The Nichols algebras of Proposition \ref{ExNicholsAlgfinDimA2} all have the PBW basis $[L]=\{x_1,[x_1x_2],x_2\}$, and (1) resp. (2)-(4) form the standard Weyl equivalence class of row 2 resp.~3 in 
\cite[Figure 1]{HeckWeylEquiv,HeckArtihRootSystRank2}, where the latter is not of Cartan type. They build up the tree type $T_2$ of \cite{HeckBV1}.
\end{rem}

\begin{thm}[Liftings of $\BV$ with Cartan matrix $A_2$]\label{ExLiftingsA2} For any lifting $A$ of $\BV$ as in Proposition \ref{ExNicholsAlgfinDimA2}, we have
$$
A\cong (T(V)\# \k[\G])/ I,
$$ 
where $I$ is
specified as follows:
\begin{enumerate}
\item[\emph{(1)}] \Dchaintwo{}{$q$}{$q^{-1}$}{$q$} (Cartan type $A_2$). Let $q_{12}q_{21}=q_{11}^{-1}=q_{22}^{-1}$.\\
\emph{(a)}  If $q_{11}=-1$, then $I$ is generated by
\begin{align*}
x_1^{2}&-\mu_1(1-g_1^2),\\  
    [x_1x_2]^{2}& - 4\mu_1 q_{21}x_2^2-\mu_{12}(1-g_{12}^2),
    \\  
    x_2^{2}&-\mu_2(1-g_2^2).
\end{align*}
A basis is $\{x_2^{r_2}[x_1x_2]^{r_{12}} x_1^{r_1}g\;|\; 0\le r_2,r_{12},r_1< 2,\,g\in\G\}$   and $\operatorname{dim}_\k A=2^3\cdot|\G|=8\cdot|\G|$.\\
\emph{(b)} 
If $\ord q_{11}=3$, then $I$ is generated by, 
see \emph{\cite{Beattie}},
\begin{align*}
[x_1x_1x_2]-\lambda_{112}(1-g_{112}),\qquad\qquad\!  x_1^{3}&-\mu_1(1-g_1^3),\\
   [x_1x_2x_2]-\lambda_{122}(1-g_{122}),\qquad [x_1x_2]^{3}& +(1-q_{11})q_{11}\lambda_{112}[x_1x_2x_2] \\
         &-    \mu_1(1-q_{11})^3 x_2^3  -\mu_{12}(1-g_{12}^3),\\
x_2^{3}&-\mu_2(1-g_2^3).
\end{align*}
A basis is $\{x_2^{r_2}[x_1x_2]^{r_{12}} x_1^{r_1}g\;|\; 0\le r_2,r_{12},r_1< 3,\,g\in\G\}$ and $\operatorname{dim}_\k A=3^3\cdot|\G|=27\cdot|\G|$.\\
\emph{(c)} If $N:=\ord q_{11}\ge 4$, 
then  $I$ is generated by, see \emph{\cite{AS-A2}},
\begin{align*}
[x_1x_1x_2],\qquad\qquad\quad  x_1^{N}&-\mu_1(1-g_1^N),\\
[x_1x_2x_2],\qquad\quad   [x_1x_2]^{N}&-\mu_1(q_{11}-1)^N q_{21}^{\frac{N(N-1)}{2}} x_2^N-\mu_{12}(1-g_{12}^N),\\ 
              		 x_2^{N}&-\mu_2(1-g_2^N).
\end{align*}
A basis is $\{x_2^{r_2}[x_1x_2]^{r_{12}} x_1^{r_1}g\;|\; 0\le r_2,r_{12},r_1< N,\,g\in\G\}$ and $\operatorname{dim}_\k A=N^3\cdot|\G|$.

\item[\emph{(2)}] \Dchaintwo{}{$q$}{$q^{-1}$}{$-1$}. Let $q_{12}q_{21}=q_{11}^{-1}$, $q_{22}=-1$.\\
 \emph{(a)} If $4 \neq N:=\ord q_{11} \ge 3$, then $I$ is generated by 
\begin{align*}
[x_1x_1x_2], \qquad\quad x_1^{N}&-\mu_1(1-g_1^N),\\
  x_2^2  &-\mu_2(1-g_2^2).
\end{align*} A basis is $\{x_2^{r_2}[x_1x_2]^{r_{12}} x_1^{r_1}g\;|\;  0\le r_1< N,\,0 \le r_2,r_{12}< 2,\,g\in\G\}$  and $\operatorname{dim}_\k A=2^2N\cdot|\G|=4N\cdot|\G|$.\\
\emph{(b)} If $\ord q_{11}=4$, then $I$ is generated by
\begin{align*}
[x_1x_1x_2]-\lambda_{112}(1-g_{112}), \qquad\quad  x_1^{4}&-\mu_1(1-g_1^4),\\  x_2^2&-\mu_2(1-g_2^2).
\end{align*}  A basis is $\{x_2^{r_2}[x_1x_2]^{r_{12}} x_1^{r_1}g\;|\;  0\le r_1< 4,\,0 \le r_2,r_{12}< 2,\,g\in\G\}$  and $\operatorname{dim}_\k A=2^24\cdot|\G|=16\cdot|\G|$.

\item[\emph{(3)}] \Dchaintwo{}{$-1$}{$q^{-1}$}{$q$}. Let $q_{11}=-1$, $q_{12}q_{21}=q_{22}^{-1}$.\\
 \emph{(a)} If  $4 \neq N:= \ord q_{22}\ge 3$, then $I$ is generated by 
\begin{align*}
[x_1x_2x_2], \qquad\quad x_1^2  &-\mu_1(1-g_1^2) ,\\
  x_2^{N}&-\mu_2(1-g_2^N).
\end{align*} A basis is $\{x_2^{r_2}[x_1x_2]^{r_{12}} x_1^{r_1}g\;|\;  0\le r_2< N,\,0 \le r_1,r_{12}< 2,\,g\in\G\}$  and $\operatorname{dim}_\k A=2^2N\cdot|\G|=4N\cdot|\G|$.\\
\emph{(b)} If $\ord q_{22}=4$, then $I$ is generated by
\begin{align*}
[x_1x_2x_2]-\lambda_{122}(1-g_{122}), \qquad\quad  x_1^2&-\mu_1(1-g_1^2),\\  x_2^{4}&-\mu_2(1-g_2^4).
\end{align*} A basis is $\{x_2^{r_2}[x_1x_2]^{r_{12}} x_1^{r_1}g\;|\;  0\le r_2< 4,\,0 \le r_1,r_{12}< 2,\,g\in\G\}$  and $\operatorname{dim}_\k A=2^24\cdot|\G|=16\cdot|\G|$.

\item[\emph{(4)}]  \Dchaintwo{}{$-1$}{$q$}{$-1$}. Let $q_{11}=q_{22}=-1$ and $ N:=\ord q_{12}q_{21}\ge 3$.\\
\emph{(a)} If $q_{12}\neq \pm 1$, then $I$ is generated by
\begin{align*}
 x_1^{2}&-\mu_1(1-g_1^2),\\ 
[x_1x_2]^{N}&-\mu_{12}(1-g_{12}^N),\\
x_2^{2}&.\end{align*}
\emph{(b)} If $q_{12}=\pm 1$, then $I$ is generated by
\begin{align*} 
 x_1^{2}&,\\
[x_1x_2]^{N}&-\mu_{12}(1-g_{12}^N),\\
x_2^{2}&-\mu_2(1-g_2^2).
\end{align*}
In both cases a basis is $\{x_2^{r_2}[x_1x_2]^{r_{12}} x_1^{r_1}g\;|\;  0\le r_2,r_1< 2,\,0 \le r_{12}< N,\,g\in\G\}$  and $\operatorname{dim}_\k A=2^2N\cdot|\G|=4N\cdot|\G|$.

\end{enumerate}
\end{thm}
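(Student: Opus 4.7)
The plan is to apply the general lifting procedure of Section~\ref{SectGeneralLiftProced} relative to the PBW-generating set $[L]=\{x_1,[x_1x_2],x_2\}$, which is common to all four cases of Proposition~\ref{ExNicholsAlgfinDimA2}. Accordingly, I must determine in the order dictated by $\prec$ the elements $r_w$ for $w\in C(L)=\{x_1x_1x_2,x_1x_2x_2\}$ and $s_u$ for $u\in D(L)\subset\{x_1,[x_1x_2],x_2\}$ making $[w]-r_w$ (resp.\ $[u]^{N_u}-s_u$) skew-primitive modulo the $\prec$-smaller relations, and then invoke Eq.~\eqref{LemTaftWilson} together with Definition~\ref{DefnLiftCoeffic} to read off the free parameters $\lambda_\bullet,\mu_\bullet$ or force them to vanish.

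First I dispatch the elementary relations. The root vector relations $a_i^{N_i}$ for $i=1,2$ yield $a_i^{N_i}=\mu_i(1-g_i^{N_i})$ directly from Lemma~\ref{LemLiftofGeneralRel}(1); the forced vanishings $x_2^2=0$ in case (4a) and $x_1^2=0$ in case (4b) come from the criterion $q_{ji}^{N_i}\neq 1$ in the same lemma. For the Serre elements $[a_1a_1a_2]$ and $[a_1a_2a_2]$, Lemma~\ref{LemLiftofGeneralRel}(2a),(3a) together with the exceptional-shape analysis of Lemma~\ref{LemQijCombinatorics}(6) give $[a_1a_1a_2]=[a_1a_2a_2]=0$ except precisely in cases (1b), (2b), (3b), where we obtain the nontrivial liftings $\lambda_{112}(1-g_{112})$ and/or $\lambda_{122}(1-g_{122})$ from Lemma~\ref{LemLiftofGeneralRel}(2b),(3b). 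In case (1a) the Serre words do not even need to appear in the generating set of $I$: they reduce modulo $x_1^2$ and $x_2^2$ via the restricted $q$-Leibniz rule of Proposition~\ref{PropqCommut}(4) since $q_{11}=q_{22}=-1$.

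The main obstacle is the highest relation $[x_1x_2]^N$, where $N=\ord q_{12,12}$, because it must absorb corrections coming from every previously lifted relation. In cases (1a), (2), (3), (4) one of $q_{11}$, $q_{22}$ equals $-1$, which puts us directly in the setting of Lemma~\ref{LemSkewZ}: its formula shows that modulo $x_2^2$ (or $x_1^2$) the element $[x_1x_2]^N$ fails to be skew-primitive only by one explicit tensor term involving $x_2$. Substituting the already-lifted root vector relation $x_i^2=\mu_i(1-g_i^2)$ converts this into the skew-primitive correction $4\mu_1 q_{21}x_2^2$ in (1a), while in case (4) the same substitution is killed because $\chi_i^2=\varepsilon$ forces the anomalous tensor factor to collapse by Lemma~\ref{LemQijCombinatorics}(3). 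For the pure Cartan cases (1b) and (1c) with $N\ge 3$, the required coproduct must be computed directly by induction on $N$ following the method in the proof of Lemma~\ref{LemSkewZ}: write $\Delta([x_1x_2])=U+V+W$, apply the $q$-binomial theorem to $(U+V+W)^N$ using the commutation relations $WU=qUW$, $VU-qUV=\ldots$, $WV-qVW=\ldots$, and track the coefficients via the $q$-Pascal identity~\eqref{PascalDreieck}. After substituting $x_1^N=\mu_1(1-g_1^N)$ and, in (1b), the lifted Serre element, the non-skew-primitive remainder collapses to the stated $\mu_1(q_{11}-1)^N q_{21}^{N(N-1)/2}x_2^N$ term (plus, in (1b), the extra $(1-q_{11})q_{11}\lambda_{112}[x_1x_2x_2]$ and $-\mu_1(1-q_{11})^3x_2^3$ contributions). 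This bookkeeping is the genuinely technical step, and it recovers the formulas of \cite{AS-A2} and \cite{Beattie}.

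Finally, the non-coincidence hypotheses of Section~\ref{SectGeneralLiftProced}, i.e.\ that $(\chi_w,g_w)$ and $(\chi_u^{N_u},g_u^{N_u})$ do not accidentally equal some $(\chi_i,g_i)$, must be verified so that Eq.~\eqref{LemTaftWilson} either supplies the free parameters $\lambda_\bullet,\mu_\bullet$ or forces them to be zero per Definition~\ref{DefnLiftCoeffic}; this is a routine application of Lemma~\ref{LemQijCombinatorics2} subcase by subcase, and it accounts for the splitting between (2a)/(2b), (3a)/(3b) and (4a)/(4b). The basis and dimension claims then follow by a standard two-sided estimate: the explicit generators of $I$ show that $\dim_\k A\le \dim_\k\BV\cdot|\G|$, while $\operatorname{gr}(A)\cong\BV\#\k[\G]$ gives the reverse inequality, so the displayed set is a $\k$-basis; the detailed PBW verification for this class of character Hopf algebras is carried out in \cite{Helbig-PBW}.
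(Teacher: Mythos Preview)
Your overall strategy is sound and matches the paper's ``backward direction'': determine the relations holding in $A$ via the general lifting procedure, then close with a dimension count against \cite{Helbig-PBW}. Two of your technical steps, however, do not go through as written.

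\textbf{Case (4).} You write that in (4) the anomalous tensor factor from Lemma~\ref{LemSkewZ} ``collapses because $\chi_i^2=\varepsilon$'', invoking Lemma~\ref{LemQijCombinatorics}(3). This is only one ingredient. The extra term from Lemma~\ref{LemSkewZ}(1) is $[x_1(x_1x_2)^{N-1}]g_2\otimes x_2$ (and analogously $x_1\,g\otimes [(x_1x_2)^{N-1}x_2]$ in (4b)), and there is no reason \emph{a priori} for $[x_1(x_1x_2)^{N-1}]$ to vanish. The paper establishes this by an explicit induction using the $q$-Leibniz calculus and the relation $x_1^2=\mu_1(1-g_1^2)$:
\[
[x_1(x_1x_2)^{N-1}]=\mu_1\Bigl(\prod_{i=0}^{N-2}(1-q_{12}^{i+2}q_{21}^{\,i})\Bigr)x_2[x_1x_2]^{N-2},
\]
and only \emph{then} does $q_{21}^2=1$ (equivalently $\chi_1^2=\varepsilon$, or else $\mu_1=0$) kill one factor in the product because $q_{12}^{N}q_{21}^{N-2}=(q_{12}q_{21})^{N}=1$. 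The dual inductive identity for $[(x_1x_2)^{N-1}x_2]$ handles (4b). Without these formulas your argument has a gap.

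\textbf{Case (1a).} You invoke Lemma~\ref{LemSkewZ} here too, but that lemma is stated in the quotient by $x_2^2$ (respectively $x_1^2$), whereas in (1a) both $x_1^2=\mu_1(1-g_1^2)$ and $x_2^2=\mu_2(1-g_2^2)$ may be nonzero, so the terms discarded in the proof of Lemma~\ref{LemSkewZ} (those of the form $\ldots\otimes x_2^2$) survive and must be tracked. The paper avoids this by computing $\Delta([x_1x_2]^2)$ directly; with $N_{12}=2$ this is short and immediately produces the correction $4\mu_1q_{21}x_2^2$.

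Two smaller remarks. In cases (2) and (3) there is \emph{no} root-vector relation $[x_1x_2]^{N_{12}}$ in the ideal (it is redundant by \cite{Helbig-PBW}), so including them in the ``main obstacle'' discussion is misleading. Finally, the paper also verifies the forward direction explicitly---that $(T(V)\#\k[\G])/I$ is a pointed Hopf algebra with $\operatorname{gr}\cong\BV\#\k[\G]$---which shows that every parameter choice $(\mu_\bullet,\lambda_\bullet)$ actually yields a lifting; your dimension argument establishes the isomorphism once a lifting $A$ is given, but not existence.
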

\begin{proof}
At first we show that in each case $(T(V)\# \k[\G])/ I$ is  a pointed Hopf algebra with coradical $\k[\G]$ and claimed basis and dimension  such that $\operatorname{gr}((T(V)\# \k[\G])/ I)\cong \BV\# \k[\G]$.
Then we show that a lifting $A$ is necessarily of this form.

\noindent \textbullet \ $(T(V)\# \k[\G])/ I$ is a Hopf algebra: We show that in every case $I$ is  generated by skew-primitive elements, thus $I$ is a Hopf ideal. The elements $x_i^{N_i}-\mu_i(1-g_i^{N_i})$ and $[x_1x_1x_2]-\lambda_{112}(1-g_{112})$ are skew-primitive if $q_{12}q_{21}=q_{11}^{-1}$ by Lemma \ref{LemSkewPrimRootSerre}. So we have a Hopf ideal in (2) and (3).

For the elements $[x_1x_2]^{N_{12}}-\redh{12}$ we argue as follows: In (1a) we directly calculate that $[x_1x_2]^2-4\mu_1q_{21}x_2^2\in P_{g_{12}^2}^{\chi_{12}^2}$. (1b),(1c) is treated in \cite{Beattie,AS-A2}.  

For (4a): By induction on $N$ (the induction basis $N=2$ is Lemma \ref{LemLiftofGeneralRel}(2b))
$$
[x_1(x_1x_2)^{N-1}]=\mu_1 \Bigl(\prod_{i=0}^{N-2}(1-q_{12}^{i+2}q_{21}^i)\Bigr) x_2 [x_1x_2]^{N-2}.
$$
Further $q_{12,12}=q_{12}q_{21}$ is of order $N$ and $q_{21}^2=1$ (or $\mu_1=0$), we have $q_{12}^N q_{21}^{N-2}=(q_{12}q_{21})^r=1$ and thus $[x_1(x_1x_2)^{N-1}]=0$. Hence $[x_1x_2]^N$ is skew-primitive by Lemma \ref{LemSkewZ}(1).

(4b) works in a similar way because of $q_{12}^2=1$: Again by induction (the induction basis $N=2$ is Lemma \ref{LemLiftofGeneralRel}(3b))
$$
[(x_1x_2)^{N-1}x_2] = \mu_2 \Bigl( \prod_{i=0}^{N-2}(1-q_{12}^{i+2}q_{21}^{i+2}) \Bigr)[x_1x_2]^{N-2}x_1g_2^2,
$$
which is 0 since $(q_{12}q_{21})^N=q_{12,12}^N=1$. Now $[x_1x_2]^N$ is skew-primitive by Lemma \ref{LemSkewZ}(2).

\noindent \textbullet \ The statement on the basis and dimension of $(T(V)\# \k[\G])/ I$ is proven in \cite{Helbig-PBW}.

\noindent \textbullet \ The  algebra \ $\k[\G]$ \ embeds  in \ $(T(V)\# \k[\G])/ I$ \ and the coradical of the latter is
 $((T(V)\# \k[\G])/ I)_0=\k[\G]$ 
\cite[Lem.~5.5.1]{Mont}, so $(T(V)\# \k[\G])/ I$ is pointed. 

\noindent \textbullet \ We consider the Hopf algebra map 
$$
T(V)\#\k[\G]\rightarrow \operatorname{gr}((T(V)\# \k[\G])/ I)
$$ 
which maps $x_i$ 
onto  the residue class of $x_i$  
in the homogeneous component of degree 1, namely 
$((T(V)\# \k[\G])/ I)_1/\k[\G]$. It is surjective, since $(T(V)\# \k[\G])/ I$ is generated as an algebra by $x_1$, $x_2$ and $\G$. Further it factorizes to 
$$
\BV\#\k[\G]\stackrel{\sim}{\rightarrow} \operatorname{gr}((T(V)\# \k[\G])/ I).
$$ 
This is a direct argument looking at the coradical filtration as in \cite[Cor.~5.3]{AS-p3}: all equations of $I$ are of the form $[w]-\red{w}$, $[u]^{N_u}-\redh{u}$ with $\red{w},\redh{u}\in \k[\G]=((T(V)\# \k[\G])/ I)_0$, hence $[w]=0$, $[u]^{N_u}=0$ in $ \operatorname{gr}((T(V)\# \k[\G])/ I)$. The latter surjective Hopf algebra map must be an isomorphism because the dimensions coincide.

\noindent \textbullet \ The other way round, let $A$ be a lifting of $\BV$ with $a_i\in P_{g_i}^{\chi_i}$ as in the beginning of this chapter. We consider the Hopf algebra map 
$$
T(V)\#\k[\G]\rightarrow A
$$ which takes $x_i$ to $a_i$ and $g$ to $g$. It is surjective since $A$ is generated by $a_1,a_2$ and $\G$ \cite[Lem.~2.2]{AS-p3}. We have to check whether this map factorizes to 
$$
(T(V)\#\k[\G])/I\stackrel{\sim}{\rightarrow} A.
$$ 
Then we are done since the dimension implies that this is an isomorphism.

But this means we have to check that the relations of $I$ hold in $A$: By Lemma \ref{LemLiftofGeneralRel} the relations concerning the elements $a_i^{N_i}$, $[a_1a_1a_2]$ and $[a_1a_2a_2]$ are of the right form. We are left to check those for $[a_1a_2]^{N_{12}}$, which appear in (1) and (4):

In (1a) we have  $[a_1a_2]^2-4\mu_1q_{21}a_2^2\in P_{g_{12}^2}^{\chi_{12}^2}$ like before. Now since $q_{1,12}^2=q_{12}^2\neq -1=q_{11}$ or $q_{12,2}^2=q_{12}^2 \neq q_{12}$, and $q_{1,12}^2= q_{12}^2\neq q_{12}$ or $q_{12,2}^2=q_{12}^2\neq -1= q_{22}$ (otherwise we get the contradiction $q_{12}=1$ and $q_{12}^2=-1$),  we have $[a_1a_2]^2=4\mu_1q_{21}a_2^2+\mu_{12}(1-g_{12}^2)$ by Lemma \ref{LemQijCombinatorics2}(2).
(1b),(1c) work in the same way; see \cite{Beattie,AS-A2}.
For (4): As shown before $[a_1a_2]^N\in P_{g_{12}^N}^{\chi_{12}^N}$. Again we deduce from Lemma \ref{LemQijCombinatorics2}(2) that $[a_1a_2]^N=\mu_{12}(1-g_{12}^N)$. 
\end{proof}

\begin{rem}
 The Conjecture \ref{ConjSkewPrim} is  true in the situation of Theorem \ref{ExLiftingsA2}: the $r_{w}$ of the non-redundant relations $[w]-\red{w}$ are 0 ($r_{112}=r_{122}=0$ if the Serre relations are not redundant)
 and $s_{12}\in \k[\G]$ in (1), otherwise $s_u=0$ if $[u]^{N_u}-\redh{u}$ is not redundant.
\end{rem}

\subsection{Lifting of $\BV$ with Cartan matrix $B_2$}\label{LiftofNichAlgCartMatrB2}

In this section we lift some of the Nichols algebras of standard type with associated Cartan matrix $\left(\begin{smallmatrix}2&-3\\-1&2 \end{smallmatrix}\right)$ of type $B_2$ (in the next Section also of non-standard type $B_2$). At first we recall the Nichols algebras (see \cite{HeckBV1}), but again we give  the ideals without redundant relations \cite{Helbig-PBW}:

\begin{prop}[Nichols algebras with Cartan matrix $B_2$]\label{ExNicholsAlgfinDimB2}  The following finite-dimensional Nichols algebras $\BV$ of standard type with braiding matrix $(q_{ij})$ and Cartan matrix of type $B_2$ are represented as follows:
\begin{enumerate}
\item[\emph{(1)}] \Dchaintwo{}{$q$}{$q^{-2}$}{$q^2$} (Cartan type $B_2$). Let $q_{12}q_{21}=q_{11}^{-2}=q_{22}^{-1}$ and $N:=\ord q_{11}$.\\
\emph{(a)} If $N= 3$, then
$$
\BV=T(V)/\bigl( [x_1x_2x_2]  , \  x_1^{3} ,\     [x_1x_1x_2]^{3},\    [x_1x_2]^{3}  , \   x_2^{3}\bigr)
$$
with basis
$\bigl\{x_2^{r_2}[x_1x_2]^{r_{12}}[x_1x_1x_2]^{r_{112}} x_1^{r_1}\ |\  0\le r_1,r_{12},r_{112}, r_2<3 \bigr\}$ and $\operatorname{dim}_\k\BV=3^4=81$.\\
\emph{(b)} If $N=4$, then
$$
\BV=T(V)/\bigl( [x_1x_1x_1x_2]  , \  x_1^{4} ,\     [x_1x_1x_2]^{2},\    [x_1x_2]^{4}  , \   x_2^{2}\bigr)
$$
with basis 
$\bigl\{x_2^{r_2}[x_1x_2]^{r_{12}}[x_1x_1x_2]^{r_{112}} x_1^{r_1}\ |\  0\le r_1,r_{12}< 4,\ 0 \le r_2,r_{112}< 2\bigr\}$  and $\operatorname{dim}_\k\BV=2^2\cdot 4^2=64$.\\
\emph{(c)} If $N\ge 5$ is odd, then
$$
\BV=T(V)/\bigl(  [x_1x_1x_1x_2] , \ [x_1x_2x_2]  , \  x_1^{N} ,\     [x_1x_1x_2]^{N},\    [x_1x_2]^{N}  , \   x_2^{N} \bigr)
$$
with basis
$\bigl\{x_2^{r_2}[x_1x_2]^{r_{12}}[x_1x_1x_2]^{r_{112}} x_1^{r_1}\ |\  0\le r_1,r_{12},r_{112}, r_2<N \bigr\}$  and $\operatorname{dim}_\k\BV=N^4$.\\
\emph{(d)} If $N\ge 6$ is even, then
$$
\BV=T(V)/\bigl(  [x_1x_1x_1x_2] , \ [x_1x_2x_2]  , \  x_1^{N} ,\     [x_1x_1x_2]^{\frac{N}{2}},\    [x_1x_2]^{N}  , \   x_2^{\frac{N}{2}} \bigr)
$$
with basis 
$\bigl\{x_2^{r_2}[x_1x_2]^{r_{12}}[x_1x_1x_2]^{r_{112}} x_1^{r_1}\ |\  0\le r_1,r_{12}< N,\ 0 \le r_2,r_{112}<\frac{N}{2} \bigr\}$ and $\operatorname{dim}_\k\BV=\frac{N^4}{4}$.

\item[\emph{(2)}]  \Dchaintwo{}{$q$}{$q^{-2}$}{$-1$}, \Dchaintwo{}{$-q^{-1}$}{$q^{2}$}{$-1$}.  Let $q_{12}q_{21}=q_{11}^{-2}$, $q_{22}=-1$ and $N:=\ord q_{11}$.\\
\emph{(a)} If $N=3$,  then 
\begin{align*}
\BV=T(V)/\bigl( [x_1x_1x_2x_1x_2],\   x_1^{3},\    [x_1x_2]^{6}, \    x_2^{2}\bigr)
\end{align*}
with basis 
$
\bigl\{x_2^{r_2}[x_1x_2]^{r_{12}}[x_1x_1x_2]^{r_{112}} x_1^{r_1}\ |\  0\le r_1<3,\ 0\le r_{12}< 6,\ 0 \le r_2,r_{112}< 2\ \bigr\}
$ and $\operatorname{dim}_\k \BV=72.$\\
\emph{(b)} If $N\ge 5$ ($N=4$ is \emph{(1b)}), then for $N':=\ord(-q_{11}^{-1})$
\begin{align*}
\BV=T(V)/\bigl([x_1x_1x_1x_2],\   x_1^{N},\   [x_1x_2]^{N'}, \  x_2^{2} \bigr)
\end{align*} 
with basis $
\bigl\{x_2^{r_2}[x_1x_2]^{r_{12}}[x_1x_1x_2]^{r_{112}} x_1^{r_1}\ |\  0\le r_1<N,\ 0\le r_{12}< N',\ 0 \le r_2,r_{112}< 2 \bigr\}$ and $\operatorname{dim}_\k\BV=4NN'.$

\item[\emph{(3)}] \Dchaintwo{}{$\zeta$}{$q^{-1}$}{$q$}, \Dchaintwo{}{$\zeta$}{$\zeta^{-1}q$}{$\ \ \zeta q^{-1}$}.  Let $\ord q_{11}=3$, $q_{12}q_{21}=q_{22}^{-1}$ and $N:=\ord q_{22}$. \\
\emph{(a)} If $N=2$,  then
\begin{align*}
\BV=T(V)/\bigl(  [x_1x_1x_2x_1x_2],\  x_1^{3},\ [x_1x_1x_2]^6,\     x_2^{2}\bigr)
\end{align*}
with basis 
$
\bigl\{x_2^{r_2}[x_1x_2]^{r_{12}} [x_1x_1x_2]^{r_{112}}x_1^{r_1}\ |\  0\le r_1,r_{12}<3,\ 0 \le r_2< 2,\ 0 \le r_{112}< 6 \bigr\}
$
and $\operatorname{dim}_\k\BV=108 .$

\emph{(b)} If $N\ge 4$ ($N=3$ is \emph{(1)} or  Proposition \ref{ExNicholsAlgfinDimA2}\emph{(1)}), then for $N':=\ord q_{11}q_{22}^{-1}$
$$
\BV=T(V)/\bigl(  [x_1x_2x_2],\  x_1^{3},\ [x_1x_1x_2]^{N'},\     x_2^{N}\bigr)
$$
with basis 
$
\bigl\{x_2^{r_2}[x_1x_2]^{r_{12}} [x_1x_1x_2]^{r_{112}}x_1^{r_1}\ |\  0\le r_1,r_{12}<3,\ 0 \le r_2< N,\ 0 \le r_{112}< N' \bigr\}
$
and $\operatorname{dim}_\k\BV=9 NN' .$

\item[\emph{(4)}] \Dchaintwo{}{$\zeta$}{$-\zeta$}{$-1$}, \Dchaintwo{}{$\zeta^{-1}$}{$-\zeta^{-1}$}{$-1$}. Let $\ord q_{11}=3$, $q_{12}q_{21}=-q_{11}$, $q_{22}=-1$, then
\begin{align*}
\BV=T(V)/\bigl( [x_1x_1x_2x_1x_2],\   x_1^{3},\       x_2^{2}  \bigr)
\end{align*} 
with basis 
$
\bigl\{x_2^{r_2}[x_1x_2]^{r_{12}}[x_1x_1x_2]^{r_{112}} x_1^{r_1}\ |\  0\le r_1,r_{12}<3,\ 0 \le r_2,r_{112}< 2 \bigr\}
$
and $\operatorname{dim}_\k A=36.$
\end{enumerate}
\end{prop}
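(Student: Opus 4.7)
The plan is to leverage Heckenberger's presentations of the rank-2 Nichols algebras from \cite{HeckBV1} and then prune the defining relations using the redundancy analysis of \cite{Helbig-PBW}. In each case of the proposition the set $L = \{x_1, [x_1x_1x_2], [x_1x_2], x_2\}$ is Shirshov closed, so the PBW criterion recalled in Section \ref{SectCharHA} identifies the candidate defining relations: one of type $[w]-\red{w}$ for each $w\in C(L)$ and one of type $[u]^{N_u}-\redh{u}$ for each $u\in D(L)$, with $N_u=\ord q_{u,u}$. The basis and dimension claims then follow at once once a correct, non-redundant list of such relations is exhibited, since the PBW theorem gives the basis $\prod_{u\in L}[u]^{r_u}$ with $0\le r_u<N_u$ and dimension $\prod_{u\in L}N_u$.

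For each diagram I would carry out two steps. The first is to match the listed ideal to Heckenberger's: check that the presentation in \cite{HeckBV1} and the one stated here generate the same ideal of $T(V)$. Heckenberger's list always contains, in addition to the root vector relations, the full set of quantum Serre relations and (in some rows) a fifth-degree Lyndon relation $[x_1x_1x_2x_1x_2]$; many of these are redundant once one fixes the truncation of the $x_i$'s. The reductions are all driven by Proposition \ref{PropqCommut}(4): for instance in (1a) and (3), where $\ord q_{11}=3$, the restricted $q$-Leibniz formula gives
\[
[x_1^{3}, x_2] \;=\; \bigl[x_1,[x_1,[x_1,x_2]]\bigr],
\]
which is a nonzero $q$-integer multiple of $[x_1x_1x_1x_2]$, so the Serre relation $[x_1x_1x_1x_2]=0$ is forced by $x_1^3=0$. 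The symmetric argument with $x_2^{N_2}=0$ eliminates $[x_1x_2x_2]$ when $q_{22}=-1$. In the rows where $[x_1x_1x_2x_1x_2]$ must appear (cases (2a), (3a), (4)), its skew-primitivity modulo the prior relations is exactly the content of Lemma \ref{LemSkew11212}, and the exponents $N_{[x_1x_2]}=2\,\ord q_{11}$ resp.\ $N_{[x_1x_1x_2]}=6$ are exactly what is needed to match the Weyl-equivalent root vector data.

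The second step is to show the presentation is non-redundant and yields the claimed PBW basis. For this one verifies that each listed relation has the form $[w]-\red{w}$ with $\red{w}\prec_L[w]$ in the ordering of Section \ref{SectWellFoundOrder}, so that Theorem \ref{PropIdealCharHopfAlg} (applied with trivial $\Gamma$) delivers the basis $\bigl\{x_2^{r_2}[x_1x_2]^{r_{12}}[x_1x_1x_2]^{r_{112}} x_1^{r_1}\bigr\}$ with the stated ranges of exponents, and hence the asserted dimension. The dimension agreement is what certifies non-redundancy: if any listed relation were a consequence of the others, the quotient would be strictly larger than the Nichols algebra, contradicting the count.

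The main obstacle is the case-by-case bookkeeping of the four orders $N_{x_1}, N_{[x_1x_2]}, N_{[x_1x_1x_2]}, N_{x_2}$, which depend sensitively on whether the defining identity $q_{ij}q_{ji}=q_{ii}^{a_{ij}}$ holds or instead $(1-a_{ij}+1)_{q_{ii}}=0$ (see \eqref{RemAssociateCartMatr}): the thresholds $N=3,4,\ge 5$ odd, and $\ge 6$ even in (1) all behave differently, and the mixed-type rows (2)–(4) must be treated separately again. Once this combinatorial check is finished and each redundancy is certified via the $q$-Jacobi identity and restricted $q$-Leibniz formula, the presentations and dimensions stated in the proposition follow.
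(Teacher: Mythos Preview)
Your proposal is correct and matches the paper's own treatment: the paper does not prove this proposition directly but simply recalls the presentations from \cite{HeckBV1} and cites \cite{Helbig-PBW} for the detection of redundant relations, which is exactly the two-step strategy you outline. Your elaboration of how the restricted $q$-Leibniz formula from Proposition~\ref{PropqCommut}(4) eliminates the Serre relations (e.g.\ $[x_1x_1x_1x_2]=[x_1^3,x_2]$ when $\ord q_{11}=3$) is precisely the mechanism behind the redundancy analysis in \cite{Helbig-PBW}; note though that in this case the identity is an equality, not a $q$-integer multiple.
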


\begin{rem}
The Nichols algebras of Proposition \ref{ExNicholsAlgfinDimB2} all have the PBW basis $[L]=\{x_2, [x_1x_2], [x_1x_1x_2], x_1\}$, and (1)-(4) form the standard Weyl equivalence classes of row 4-7 in 
\cite[Figure 1]{HeckWeylEquiv,HeckArtihRootSystRank2}, where the rows 5-7 are not of Cartan type. They build up the tree type $T_3$ of \cite{HeckBV1}.
\end{rem}

\begin{thm}[Liftings of $\BV$ with Cartan matrix $B_2$]\label{ExLiftingsB2}
For any lifting $A$ of $\BV$ as in Proposition \ref{ExNicholsAlgfinDimB2}, we have
$$
A\cong (T(V)\# \k[\G])/ I,
$$ 
where $I$ is 
specified as follows:
\begin{enumerate}
\item[\emph{(1)}] \Dchaintwo{}{$q$}{$q^{-2}$}{$q^2$} (Cartan type $B_2$). Let $q_{12}q_{21}=q_{11}^{-2}=q_{22}^{-1}$ .\\
\emph{(a)} If $\ord q_{11}=4$ and 
  $q_{12}\neq \pm 1$, then $I$ is generated by
\begin{align*}
[x_1x_1x_1x_2]  , \qquad\qquad\qquad
 x_1^{4}&-\mu_1(1-g_1^4),\\
         [x_1x_1x_2]^{2}&,\\
      [x_1x_2]^{4}&-\mu_{12}(1-g_{12}^4)  , \\ 
                  x_2^{2}&.
\end{align*}
\emph{(b)} If $\ord q_{11}=4$ and $q_{12}=\pm 1$, then $I$ is generated by
\begin{align*}
[x_1x_1x_1x_2], \qquad\qquad\qquad
  x_1^{4}&-\mu_1(1-g_1^4),\\
         [x_1x_1x_2]^{2}& -8q_{11}\mu_1 x_2^2 -\mu_{112}(1-g_{112}^2),\\
         [x_1x_2]^{4}&-16\mu_1x_2^4+4\mu_{112}q_{11}x_2^2-\mu_{12}(1-g_{12}^4)  , \\ 
                   x_2^{2}&-\mu_2(1-g_2^2).
\end{align*}
In both \emph{(a)} and \emph{(b)} $\operatorname{dim}_\k A=2^24^2\cdot |\G|=128 \cdot |\G|$ and a basis is 
$$
\bigl\{x_2^{r_2}[x_1x_2]^{r_{12}}[x_1x_1x_2]^{r_{112}} x_1^{r_1}g\ |\  0\le r_1,r_{12}< 4,\ 0 \le r_2,r_{112}< 2,\ g\in\G\bigr\}.
$$ 

\item[\emph{(2)}] \Dchaintwo{}{$q$}{$q^{-2}$}{$-1$}, \Dchaintwo{}{$-q^{-1}$}{$q^{2}$}{$-1$}.  Let $q_{12}q_{21}=q_{11}^{-2}$, $q_{22}=-1$.\\
\emph{(a)} If $\ord q_{11}=3$ and $q_{12}\neq \pm 1$,  then $I$ is generated by
\begin{align*}
[x_1x_1x_2x_1x_2], \qquad\qquad
            x_1^{3}&-\mu_1(1-g_1^3),\\
     [x_1x_2]^{6}&-\mu_{12}(1-g_{12}^6), \\ 
    x_2^{2}&.
\end{align*}
\emph{(b)} If $\ord q_{11}=3$ and $q_{12}=-1$, then $I$ is generated by
\begin{align*}
[x_1x_1x_2x_1x_2], \qquad\qquad
            x_1^{3}&,\\
     [x_1x_2]^{6} &-\mu_{12}(1-g_{12}^6), \\ 
  x_2^{2}&-\mu_2(1-g_2^2).
\end{align*}
\emph{(c)} If $\ord q_{11}=3$ and $q_{12}=  1$,  then $I$ is generated by
\begin{align*}
[x_1x_1x_2x_1x_2]&+3\mu_1(1-q_{11})x_2^2 -\lambda_{11212}(1-g_{11212}),\\
            x_1^{3}&-\mu_1(1-g_1^3),\\
     [x_1x_2]^{6}&-s_{12}         -\mu_{12}(1-g_{12}^6), \\ 
  x_2^{2}&-\mu_2(1-g_2^2),
\end{align*}
where
\begin{align*}
 s_{12}:=-3\mu_2 \Bigl\{&(\lambda_{11212}( 1-q_{11})+9\mu_1\mu_2q_{11})[x_1x_2]^2 x_1g_2^2\\
         &-q_{11}(\lambda_{11212}(1-q_{11})+9\mu_1\mu_2q_{11})[x_1x_2][x_1x_1x_2]g_2^2\\
         &+(\lambda_{11212}^2q_{11}^2+3\mu_1\mu_2\lambda_{11212}(1-q_{11}^2)-9\mu_1^2\mu_2^2) g_1^6g_2^6\\
         &+ 3\mu_1\mu_2(\lambda_{11212}(1-q_{11}^2)-3\mu_1\mu_2)g_1^3g_2^6\\
         &+ \lambda_{11212} (3\mu_1\mu_2(q_{11}-1)+\lambda_{11212})g_1^3g_2^4\\
         &-9\mu_1^2\mu_2^2g_2^6\\
         &+ 3\mu_1\mu_2 (\lambda_{11212}(q_{11}-1)-9\mu_1\mu_2q_{11})g_2^4\\
         &+ q_{11}( \lambda_{11212}^2 -6\mu_1\mu_2\lambda_{11212} (1-q_{11}) -27\mu_1^2\mu_2^2q_{11} )g_2^2\Bigr\}.
\end{align*}
In \emph{(a),(b),(c)}  $\operatorname{dim}_\k A=72 \cdot |\G|.$ and a basis is 
\begin{align*}
 \bigl\{x_2^{r_2}[x_1x_2]^{r_{12}}[x_1x_1x_2]^{r_{112}} x_1^{r_1}g\ |\  0\le r_1<3,\ 0\le r_{12}< 6,\ 0 \le r_2,r_{112}< 2,\ g\in\G\bigr\}.
\end{align*}
\emph{(d)} Let $N:=\ord q_{11}>4$ ($N=4$ is \emph{(1)}), and $q_{12}\neq \pm 1$. Denote $$N':=\ord(-q_{11}^{-1})=\begin{cases}
     2N, & \text{if }N\text{ odd},\\
N/2 ,& \text{if }N\text{ even and }N/2\text{ odd},\\
N, & \text{if }N,N/2\text{ even}.
    \end{cases}
 $$ 
Then $I$ is generated by
\begin{align*}
[x_1x_1x_1x_2], \qquad\qquad\qquad
            x_1^{N}&-\mu_1(1-g_1^N),\\
     [x_1x_2]^{N'} &-\mu_{12}(1-g_{12}^{N'}), \\ 
  x_2^{2}&.
\end{align*} It is $\operatorname{dim}_\k A=4NN' \cdot |\G|$ and a basis is 
$$
\bigl\{x_2^{r_2}[x_1x_2]^{r_{12}}[x_1x_1x_2]^{r_{112}} x_1^{r_1}g\ |\  0\le r_1<N,\ 0\le r_{12}< N',\ 0 \le r_2,r_{112}< 2,\ g\in\G\bigr\}.
$$ 

\item[\emph{(3)}]  \Dchaintwo{}{$\zeta$}{$q^{-1}$}{$q$},  \Dchaintwo{}{$\zeta$}{$\zeta^{-1}q$}{\ \ $\zeta q^{-1}$}. Let $\ord q_{11}=3$, $q_{12}q_{21}=q_{22}^{-1}$. \\
\emph{(a)} If $q_{22}=-1$ and $q_{12}\neq \pm 1$,  then $I$ is generated by
\begin{align*}
[x_1x_1x_2x_1x_2], \qquad\qquad\qquad
  x_1^{3}&-\mu_1(1-g_1^3),\\
[x_1x_1x_2]^6&-\mu_{112}(1-g_{112}^6)\\
      x_2^{2}&.
\end{align*}
\emph{(b)} If $q_{22}=-1$ and $q_{12}= 1$,  then $I$ is generated by
\begin{align*}
[x_1x_1x_2x_1x_2], \qquad\qquad\qquad
  x_1^{3}&,\\
[x_1x_1x_2]^6&-\mu_{112}(1-g_{112}^6)\\
      x_2^{2}&-\mu_2(1-g_2^2).
\end{align*}
\emph{(c)} If $q_{22}=-1$ and $q_{12}=-1$,  then $I$ is generated by
\begin{align*}
[x_1x_1x_2x_1x_2]&+4\mu_2 x_1^3 g_2^2  -\lambda_{11212} (1-g_1^3 g_2^2),\\
  x_1^{3}&-\mu_1(1-g_1^3),\\
[x_1x_1x_2]^6 &- s_{112}-\mu_{112}(1-g_{112}^6)\\
      x_2^{2}&-\mu_2(1-g_2^2),
\end{align*}
where
\begin{align*}
 s_{112}:=-2\mu_1\Bigl\{& 2(- \lambda_{11212}+4 \mu_1\mu_2) q_{11}(1-q_{11})   x_2 [x_1x_1x_2]^3 g_1^3 g_2^2\\
&+2 (\lambda_{11212}-4\mu_1\mu_2)  q_{11}(1-q_{11}) [x_1x_2]^2 [x_1x_1x_2]^2 g_1^3 g_2^2\\
&+2(\lambda_{11212}^2-8\mu_1\mu_2\lambda_{11212}+16\mu_1^2\mu_2^2)q_{11}(1-q_{11})  [x_1x_2] [x_1x_1x_2] g_1^6 g_2^4\\
&+8\mu_1\mu_2(\lambda_{11212}-4 \mu_1\mu_2)q_{11}(1-q_{11}) [x_1x_2] [x_1x_1x_2] g_1^3 g_2^4\\
&+2\lambda_{11212}(-\lambda_{11212}+4\mu_1\mu_2) q_{11}(1-q_{11})[x_1x_2] [x_1x_1x_2] g_1^3 g_2^2\\
&+2(-\lambda_{11212}^3+6 \mu_1\mu_2\lambda_{11212}^2-16\mu_1^2\mu_2^2 \lambda_{11212}+16\mu_1^3\mu_2^3)         g_1^{12} g_2^6\\
&+(- \lambda_{11212}^3+12 \mu_1 \mu_2 \lambda_{11212}^2-48 \mu_1^2\mu_2^2\lambda_{11212}+64 \mu_1^3\mu_2^3)q_{11}(1-q_{11})  g_1^9 g_2^6\\
&+10 \mu_1 \mu_2(- \lambda_{11212}^2+8 \mu_1 \mu_2 \lambda_{11212}-16 \mu_1^2 \mu_2^2)        g_1^6 g_2^6\\
&+2( \lambda_{11212}^3-7 \mu_1 \mu_2 \lambda_{11212}^2+8 \mu_1^2\mu_2^2\lambda_{11212}+16 \mu_1^3 \mu_2^3)          g_1^6 g_2^4\\
&+16\mu_1^2\mu_2^2(   \lambda_{11212}-4 \mu_1 \mu_2)  q_{11}(1-q_{11})   g_1^3 g_2^6\\
&+8\mu_1 \mu_2\lambda_{11212}(-  \lambda_{11212}+4 \mu_1 \mu_2 ) q_{11}(1-q_{11})  g_1^3 g_2^4\\
&+32 \mu_1^3 \mu_2^3      g_2^6\end{align*}\begin{align*}
&+ \lambda_{11212}^2(  \lambda_{11212}-4 \mu_1 \mu_2) q_{11}(1-q_{11})    g_1^3 g_2^2\\
&+32\mu_1^2 \mu_2^2(- \lambda_{11212}+ \mu_1 \mu_2)       g_2^4\\
&+4\mu_1\mu_2(3  \lambda_{11212}^2-8 \mu_1 \mu_2 \lambda_{11212}+8 \mu_1^2 \mu_2^2)      g_2^2\Bigr\}
\end{align*}
In \emph{(a),(b),(c)}  $\operatorname{dim}_\k A=108 \cdot |\G|$ and a basis is 
$$
\bigl\{x_2^{r_2}[x_1x_2]^{r_{12}} [x_1x_1x_2]^{r_{112}}x_1^{r_1}g\ |\  0\le r_1,r_{12}<3,\ 0 \le r_2< 2,\ 0 \le r_{112}< 6,\ g\in\G\bigr\}.
$$ 

\item[\emph{(4)}] \Dchaintwo{}{$\zeta$}{$-\zeta$}{$-1$},  \Dchaintwo{}{$\zeta^{-1}$}{$-\zeta^{-1}$}{$-1$}.  Let $\ord q_{11}=3$, $q_{12}q_{21}=-q_{11}$ of order $6$, $q_{22}=-1$.\\
\emph{(a)} If $q_{12}\neq \pm 1$,  then $I$ is generated by 
\begin{align*}
[x_1x_1x_2x_1x_2], \qquad\qquad\qquad
  x_1^{3}&-\mu_1(1-g_1^3),\\
      x_2^{2}&.
\end{align*}
\emph{(b)} If $q_{12}= 1$,  then $I$ is generated by
\begin{align*}
[x_1x_1x_2x_1x_2], \qquad\qquad\qquad
  x_1^{3}&,\\
      x_2^{2}&-\mu_2(1-g_2^2).
\end{align*}
\emph{(c)} If $q_{12}= -1$,  then $I$ is generated by
\begin{align*}
[x_1x_1x_2x_1x_2]&-
\mu_2(1+q_{11})x_1^3g_2^2 -\lambda_{11212}(1-g_{11212}),\\
  x_1^{3}&-\mu_1(1-g_1^3),\\
      x_2^{2}&-\mu_2(1-g_2^2).
\end{align*}
In \emph{(a),(b),(c)} $\operatorname{dim}_\k A=36 \cdot |\G|$ and a basis is 
$$
\bigl\{x_2^{r_2}[x_1x_2]^{r_{12}}[x_1x_1x_2]^{r_{112}} x_1^{r_1}g\ |\  0\le r_1,r_{12}<3,\ 0 \le r_2,r_{112}< 2,\ g\in\G\bigr\}.
$$ 
\end{enumerate}
\end{thm}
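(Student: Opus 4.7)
The plan is to follow exactly the template established in the proof of Theorem \ref{ExLiftingsA2}: for each of the four Weyl equivalence classes, we verify (i) that the algebra $(T(V)\# \k[\G])/I$ with the specified ideal $I$ is a pointed Hopf algebra with coradical $\k[\G]$, the claimed PBW basis, and associated graded isomorphic to $\BV\#\k[\G]$; and (ii) conversely that every lifting $A$ of $\BV$ admits a presentation of this form. For (i), the key is that $I$ is generated by elements that are skew-primitive modulo the $\prec$-smaller generators, so that $I$ is a Hopf ideal. The root vector relations $x_i^{N_i}-\mu_i(1-g_i^{N_i})$ are skew-primitive by Lemma~\ref{LemSkewPrimRootSerre}(1); the Serre-type relations $[x_1x_1x_1x_2]$ and $[x_1x_2x_2]$ by Lemma~\ref{LemSkewPrimRootSerre}(2),(3) combined with Lemma~\ref{LemLiftofGeneralRel}. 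For $[x_1x_2]^{N_{12}}-\redh{12}$ we invoke Lemma~\ref{LemSkewZ}, noting that in cases with $N'=2N$ or where the tail term $[x_1(x_1x_2)^{N-1}]$ survives (e.g.\ case (1b)) the correction $-8q_{11}\mu_1 x_2^2$, $-16\mu_1 x_2^4+4\mu_{112}q_{11}x_2^2$ etc.\ is forced; this is obtained by iterating the induction formula for $[x_1(x_1x_2)^{r}]$ sketched already in the $A_2$ proof. For the quintic relation $[x_1x_1x_2x_1x_2]-\redh{11212}$ we apply Lemma~\ref{LemSkew11212}: in the generic subcases the coproduct simplifies so that $[x_1x_1x_2x_1x_2]$ is itself skew-primitive modulo the Serre and root-vector relations.

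Once skew-primitivity is established, the asserted basis and dimension follow from the PBW criterion of \cite{Helbig-PBW} (applied to the Shirshov closed set $L=\{x_1,[x_1x_1x_2],[x_1x_2],x_2\}$), pointedness and the equality of coradicals follow from \cite[Lem.~5.5.1]{Mont}, and the isomorphism $\BV\#\k[\G]\stackrel{\sim}\rightarrow\gr((T(V)\#\k[\G])/I)$ is obtained verbatim as in the $A_2$ case: the canonical surjection lowers every relation $[w]-\red{w}$, $[u]^{N_u}-\redh{u}$ to $[w]=0$ respectively $[u]^{N_u}=0$ in the associated graded because $\red{w},\redh{u}$ lie in the coradical filtration step strictly below the leading super word, and then the two sides have the same dimension.

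For direction (ii), let $A$ be a lifting and fix $a_i\in P_{g_i}^{\chi_i}$ lifting $x_i$. The Hopf algebra map $T(V)\#\k[\G]\to A$, $x_i\mapsto a_i$, $g\mapsto g$, is surjective by \cite[Lem.~2.2]{AS-p3}, and we must check that each generator of $I$ vanishes in $A$. The relations on $a_i^{N_i}$, $[a_1a_1a_1a_2]$, $[a_1a_2x_2]$ are forced by Lemma~\ref{LemLiftofGeneralRel}, where the constraints on the $\mu_i$, $\lambda_{iij}$, $\lambda_{ijj}$ in cases (1a),(1b),(2b),(2c),(3b),(3c),(4b),(4c) come from the vanishing dictated by Lemma~\ref{LemQijCombinatorics}. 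For the remaining relations ($[a_1a_2]^{N'}$, $[a_1a_1a_2]^{N'}$, $[a_1a_1a_2a_1a_2]$) we use Lemmas~\ref{LemSkewZ} and~\ref{LemSkew11212} to identify the corresponding elements as skew-primitive in $A$, and then Lemma~\ref{LemQijCombinatorics2} combined with Eq.~\eqref{LemTaftWilson}: either the pair $(g,\chi)$ is not of the form $(g_i,\chi_i)$, and the skew-primitive space is $\k(1-g)$, giving the term $\mu(1-g)$; or it coincides with some $(g_i,\chi_i)$, and the extra freedom $\lambda_{11212}(1-g_{11212})$ appears (as in (2c), (3c), (4c)). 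The map then factors through $(T(V)\#\k[\G])/I$, and equality of dimensions finishes the proof.

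The main obstacle is the verification of the explicit counterterms $s_{12}$ in (2c) and $s_{112}$ in (3c), which are lengthy expressions not lying in $\k[\G]$. For these we proceed inductively in $\prec$ according to the general procedure of Section~\ref{SectGeneralLiftProced}: starting from the coproduct formula for $[x_1x_1x_2x_1x_2]$ specialized via Lemma~\ref{LemSkew11212} to $q_{22}=-1$ together with $q_{11}\in\k^\times$ of order $3$ and $q_{12}=\pm 1$, we reduce modulo the already-established relations $x_1^{3}-\mu_1(1-g_1^3)$, $x_2^2-\mu_2(1-g_2^2)$, $[x_1x_1x_2x_1x_2]-\lambda_{11212}(1-g_{11212})-\ldots$; the remaining non-primitive terms in $\Delta([x_1x_2]^6)$ resp.\ $\Delta([x_1x_1x_2]^6)$ are precisely cancelled by $\Delta(s_{12})$ resp.\ $\Delta(s_{112})$, whose coefficients are determined (and pinned down uniquely up to the $\k(1-g)$ ambiguity) by matching monomials in $[x_1x_2]^a[x_1x_1x_2]^b\otimes[\cdots]$ of intermediate length. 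Once the candidate $r_w$, $s_u$ are known, verification of skew-primitivity is a long but mechanical computation with the $q$-Leibniz formulas of Proposition~\ref{PropqCommut}.
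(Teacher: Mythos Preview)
Your proposal is correct and follows essentially the same route as the paper's proof: the same two-direction template from Theorem~\ref{ExLiftingsA2}, the same appeals to Lemmas~\ref{LemSkewPrimRootSerre}, \ref{LemSkewZ}, \ref{LemSkew11212}, \ref{LemLiftofGeneralRel}, \ref{LemQijCombinatorics2}, the PBW basis from \cite{Helbig-PBW}, and the dimension argument at the end. The only real difference is that for the heavy verifications in (1b), (2c), and (3a)--(3c) the paper does not carry out the ``long but mechanical'' computation by hand but instead invokes the computer algebra system \textsc{FELIX}; your description of how one would find $s_{12}$ and $s_{112}$ by matching coproduct terms is the procedure of Section~\ref{SectGeneralLiftProced}, which is exactly what the machine computation implements.

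One small slip to fix: your dichotomy for Eq.~\eqref{LemTaftWilson} is misstated. The split is not ``$(g,\chi)\neq(g_i,\chi_i)$ forces $P_g^\chi=\k(1-g)$'' versus ``$(g,\chi)=(g_i,\chi_i)$ gives the $\lambda$-freedom''; rather, once Lemma~\ref{LemQijCombinatorics2} rules out $(g,\chi)=(g_i,\chi_i)$, the remaining alternative is $\chi=\varepsilon$ (giving $P_g^\chi=\k(1-g)$ and the free parameter $\lambda_w$ or $\mu_u$) versus $\chi\neq\varepsilon$ (giving $P_g^\chi=0$ and hence $\lambda_w=\mu_u=0$, consistent with Definition~\ref{DefnLiftCoeffic}). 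In the cases (2c), (3c), (4c) the parameter $\lambda_{11212}$ survives precisely because $\chi_{11212}=\varepsilon$ there, not because the pair matches some $(g_i,\chi_i)$.
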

\begin{proof}
We proceed as in the proof of Theorem \ref{ExLiftingsA2}. 

\noindent \textbullet \ $(T(V)\# \k[\G])/ I$ is a Hopf algebra, since $I$ is generated by skew-primitive elements:
 Again the elements $x_i^{N_i}-\mu_i(1-g_i^{N_i})$ and $[x_1x_1x_1x_2]-\lambda_{1112}(1-g_{1112})$ are skew-primitive if $q_{12}q_{21}=q_{11}^{-2}$ by Lemma \ref{LemSkewPrimRootSerre}. 

(1a) By Lemma \ref{LemSkewZ}(1)  $[x_1x_2]^{4}\in P_{g_{12}^4}^{\chi_{12}^4}$ and hence also  $[x_1x_2]^{4}-\mu_{12}(1-g_{12}^4) \in P_{g_{12}^4}^{\chi_{12}^4}$. A direct computation yields $[x_1x_1x_2]^{2}\in P_{g_{112}^2}^{\chi_{112}^2}$.

(1b) Again direct computation shows that  $[x_1x_1x_2]^{2} -8q_{11}\mu_1 x_2^2 -\mu_{112}(1-g_{112}^2)$ and 
$[x_1x_2]^{4}-16\mu_1x_2^4+4\mu_{112}q_{11}x_2^2-\mu_{12}(1-g_{12}^4)$ are skew primitive; we used the computer algebra system FELIX \cite{felix}.

(2a) We have $[x_1x_1x_2x_1x_2]\in P_{g_{11212}}^{\chi_{11212}}$ by Lemma \ref{LemSkew11212}(2). Further  $[x_1x_2]^{6}\in P_{g_{12}^6}^{\chi_{12}^6}$ by Lemma \ref{LemSkewZ}(1).

(2b) Again $[x_1x_1x_2x_1x_2]\in P_{g_{11212}}^{\chi_{11212}}$ by Lemma \ref{LemSkew11212}(2) and a direct computation yields $[x_1x_2]^{6}-\mu_{12}(1-g_{12}^6)\in P_{g_{12}^6}^{\chi_{12}^6}$.

(2c) Using FELIX we get that all elements are skew-primitive.

(2d) This is again Lemma \ref{LemSkewZ}(1).

(3a) and (3b):   $[x_1x_1x_2x_1x_2]\in P_{g_{11212}}^{\chi_{11212}}$ by Lemma \ref{LemSkew11212}(1). Straightforward calculation shows that $[x_1x_1x_2]^{6} -\mu_{112}(1-g_{112}^6)\in P_{g_{112}^6}^{\chi_{112}^6}$; here again we used FELIX.

(3c) is computed using FELIX.

(4a) and (4b): $[x_1x_1x_2x_1x_2]\in P_{g_{11212}}^{\chi_{11212}}$ by Lemma \ref{LemSkew11212}(1).

(4c) Looking at the coproduct computed in Lemma \ref{LemSkew11212}(1) we deduce that the element $[x_1x_1x_2x_1x_2]- 
\mu_2(1+q_{11})x_1^3g_2^2$ and hence $[x_1x_1x_2x_1x_2]- \mu_2(1+q_{11})x_1^3g_2^2 -\lambda_{11212}(1-g_{11212})$ is skew-primitive.

\noindent \textbullet \ The statement on the basis and dimension of $(T(V)\# \k[\G])/ I$ is proven  in \cite{Helbig-PBW}.

\noindent \textbullet \ $(T(V)\# \k[\G])/ I$ is pointed by the same argument as in the proof of Theorem \ref{ExLiftingsA2}.

\noindent \textbullet \ The surjective Hopf algebra map as given  in the proof of Theorem \ref{ExLiftingsA2}
$$
T(V)\#\k[\G]\rightarrow  \operatorname{gr}((T(V)\# \k[\G])/ I)
$$ 
factorizes to an isomorphism $
\BV\#\k[\G]\stackrel{\sim}{\rightarrow} \operatorname{gr}((T(V)\# \k[\G])/ I):
$
Again we look at the coradical filtration. All equations of $I$ are of the form $[w]-\red{w}$, $[u]^{N_u}-\redh{u}$ with $\red{w}$ resp.~$\redh{u}$ of lower degree in $ \operatorname{gr}((T(V)\# \k[\G])/ I)$, hence $[w]=0$, $[u]^{N_u}=0$ in $ \operatorname{gr}((T(V)\# \k[\G])/ I)$.

\noindent \textbullet \ Like before, for a lifting $A$ we have to check whether the surjective Hopf algebra map 
$$
T(V)\#\k[\G]\rightarrow A
$$ which takes $x_i$ to $a_i$ and $g$ to $g$  factorizes to
$$
(T(V)\#\k[\G])/I\stackrel{\sim}{\rightarrow} A.
$$

By Lemma \ref{LemLiftofGeneralRel} the relations concerning the elements $a_i^{N_i}$ and $[a_1a_1a_1a_2]$ are of the right form. We deduce from Lemma \ref{LemQijCombinatorics2} that the relations also hold in $A$: this is just combinatorics on the braiding matrices which we want to demonstrate for the following.

(1a) We have $\chi_{112}^2\neq \varepsilon$ by Lemma \ref{LemQijCombinatorics2}(2a), since $q_{1,112}^2=q_{12}^2\neq 1$.
Further $P_{g_{112}^2}^{\chi_{112}^2}=0$ by Lemma \ref{LemQijCombinatorics2}(2b): Suppose $q_{21}^4q_{22}^2=q_{21}$, $q_{12}^4q_{22}^2=q_{12}$, then $q_{21}^3=q_{12}^3=1$, which contradicts $q_{12}q_{21}=q_{11}^{-2}=-1$; also if $q_{11}^4q_{12}^2=q_{12}$, $q_{11}^4q_{21}^2=q_{21}$, then $q_{12}=q_{21}=1$, again a contradiction to $q_{12}q_{21}=q_{11}^{-2}=-1$. Hence $[a_1a_1a_2]^{2}=0$. 
The other cases work in exactly the same manner.
\end{proof}

\begin{rem}
The Conjecture \ref{ConjSkewPrim} is true in the above cases. Further note that in (2c) $s_{12}\notin \k[\G]$ and in (3c) $s_{112}\notin \k[\G]$.

Further we want to note the cases not treated in the theorem above:
\begin{enumerate}
 \item The case (1) when $5\neq N:=\ord q_{11}\ge 3$ is odd is treated in \cite{Beattie}, and the case $N=5$ in \cite{Didt}.

\item There is no general method for  (1) in the case $N:=\ord q_{11}\ge 6$  is even. Here $\ord q_{22}=\ord q_{11}^2=\frac{N}{2}$.

\item There is no general method for (2d) in the case $q_{12}= \pm 1$.

\item There is no general method for (3) in the case  $N:=\ord q_{22}\ge 4$. The case $N=3$ is (1) of the theorem above or (1) of Theorem \ref{ExLiftingsA2}.
\end{enumerate}

\end{rem}

\subsection{Lifting of $\BV$ of non-standard type}\label{LiftofNichAlgNonStandardType}

In this section we want to lift some of the Nichols algebras of the Weyl equivalence classes of rows 8 and 9 of \cite[Figure 1]{HeckWeylEquiv,HeckArtihRootSystRank2} which are not of standard type,  namely for $\ord\zeta =12$ we lift  
 \begin{center}
  \Dchaintwo{}{$-\zeta ^{-2}$}{$-\zeta ^3$}{$-\zeta ^2$},  \
 \Dchaintwo{}{$-\zeta ^{-2}$}{$\zeta ^{-1}$}{$-1$},\
 \Dchaintwo{}{$-\zeta ^2$}{$-\zeta $}{$-1$},\
 \Dchaintwo{}{$-\zeta ^3$}{$\zeta $}{$-1$},\
\Dchaintwo{}{$-\zeta ^3$}{$-\zeta ^{-1}$}{$-1$}
\end{center}
 of row 8, and
\begin{center}
 \Dchaintwo{}{$-\zeta ^2$}{$\zeta ^3$}{$-1$},\
 \Dchaintwo{}{$-\zeta ^{-1}$}{$-\zeta ^3$}{$-1$}
\end{center}
of row 9. Again, at first we give a nice presentation of the ideal cancelling the redundant relations of the ideals given in \cite{HeckBV1}:

\begin{prop}[Nichols algebras of rows 8 and 9]\label{PropNicholsAlgRows89} The following finite-dimensional Nichols algebras $\BV$  with braiding matrix $(q_{ij})$ of rows 8 and 9 of \cite[Figure 1]{HeckWeylEquiv,HeckArtihRootSystRank2} are represented as follows:
 Let $\zeta\in\k^\times$, $\ord\zeta =12$.
\begin{enumerate}
 \item[\emph{(1)}] 
 \Dchaintwo{}{$-\zeta ^{-2}$}{$-\zeta ^3$}{$-\zeta ^2$}. Let $q_{11}=-\zeta^{-2}$,  
$
q_{12}q_{21}=-\zeta^{3}
$, $q_{22}=-\zeta^{2}$, then 
\begin{align*}
\BV =T(V)/\bigl(  [x_1x_1x_2x_2]  &- \frac{1}{2}q_{11}q_{12}(q_{12}q_{21}-q_{11})(1-q_{12}q_{21})[x_1x_2]^2, 
 x_1^{3},\  x_2^{3} \bigr)
\end{align*}
with  basis $
 \bigl\{x_2^{r_2}[x_1x_2x_2]^{r_{122} }[x_1x_2]^{r_{12} }[x_1x_1x_2]^{r_{112}} x_1^{r_1}\ |\  0\le r_1,r_{2}< 3,\ 
0 \le r_{112},r_{122}< 2,\ 0\le r_{12}<4\bigr\}
$ 
and $\operatorname{dim}_\k \BV=144.$

\item[\emph{(2)}]  \Dchaintwo{}{$-\zeta ^{-2}$}{$\zeta ^{-1}$}{$-1$},  \Dchaintwo{}{$-\zeta ^2$}{$-\zeta $}{$-1$}. Let $q_{11}=-\zeta^{2}$, $q_{12}q_{21}=-\zeta$,  $q_{22}=-1$, or 
   $q_{11}=-\zeta^{-2}$, $q_{12}q_{21}=\zeta^{-1}$,  $q_{22}=-1$, then 
\begin{align*}
\BV =T(V)/\bigl( [x_1x_1x_2x_1x_2x_1x_2]  , \ x_1^{3},\  x_2^{2}\bigr)
\end{align*}
with  basis 
$ 
\bigl\{x_2^{r_2}[x_1x_2]^{r_{12} }[x_1x_1x_2x_1x_2]^{r_{11212}}[x_1x_1x_2]^{r_{112}} x_1^{r_1}\ |\  0\le r_1,r_{112}< 3,\ 0 \le r_{2},r_{11212}< 2,\,0\le r_{12}<4\bigr\}
$ 
and $\operatorname{dim}_\k\BV=144.$

\item[\emph{(3)}] \Dchaintwo{}{$-\zeta ^3$}{$\zeta $}{$-1$}, \Dchaintwo{}{$-\zeta ^3$}{$-\zeta ^{-1}$}{$-1$}. Let  $q_{11}=-\zeta^{3}$, $q_{12}q_{21}=\zeta$,  $q_{22}=-1$, or   $q_{11}=-\zeta^{3}$, $q_{12}q_{21}=-\zeta^{-1}$,  $q_{22}=-1$, then 
\begin{align*}
\BV =T(V)/\bigl( [x_1x_1x_2x_1x_2]  , \
 x_1^{4},\
 x_2^{2}\bigr)
\end{align*}
with  basis 
$ 
 \bigl\{x_2^{r_2}[x_1x_2]^{r_{12} }[x_1x_1x_2]^{r_{112}}[x_1x_1x_1x_2]^{r_{1112}} x_1^{r_1}\ |\  0\le r_1< 4,\
 0\le r_{12},r_{112}<3,\,  0 \le r_{2},r_{1112}< 2\}
$ 
and $\operatorname{dim}_\k\BV=144.$

\item[\emph{(4)}]  \Dchaintwo{}{$-\zeta ^2$}{$\zeta ^3$}{$-1$}.  Let $q_{11}=-\zeta^{2}$, $q_{12}q_{21}=\zeta^3$,  $q_{22}=-1$, then 
\begin{align*}
\BV =T(V)/\bigl( [x_1x_1x_2x_1x_2x_1x_2]  , \
 x_1^{3},\
[x_1x_2]^{12},\
 x_2^{2}\bigr)
\end{align*}
with basis 
$ 
\bigl\{x_2^{r_2}[x_1x_2]^{r_{12} }[x_1x_1x_2x_1x_2]^{r_{11212}}[x_1x_1x_2]^{r_{112}} x_1^{r_1}\ |\  0\le r_1,r_{112}< 3,\ 0 \le r_{2},r_{11212}< 2,\,0\le r_{12}<12\bigr\}
$ 
 and $\operatorname{dim}_\k \BV=432.$

\item[\emph{(5)}]  \Dchaintwo{}{$-\zeta ^{-1}$}{$-\zeta ^3$}{$-1$}.  Let $q_{11}=-\zeta^{-1}$, $q_{12}q_{21}=-\zeta^3$,  $q_{22}=-1$, then 
\begin{align*}
\BV =T(V)/\bigl( [x_1x_1x_1x_1x_2] ,\
[x_1x_1x_2x_1x_2], \
 x_1^{12},\
 x_2^{2}\bigr)
\end{align*}
with   basis 
$ 
 \bigl\{x_2^{r_2}[x_1x_2]^{r_{12} }[x_1x_1x_2]^{r_{112}}[x_1x_1x_1x_2]^{r_{1112}} x_1^{r_1}\ |\  0\le r_1< 12,\
 0\le r_{12},r_{112}<3,\,  0 \le r_{2},r_{1112}< 2 \bigr\}
$ 
  and $\operatorname{dim}_\k \BV=432.$
\end{enumerate}
\end{prop}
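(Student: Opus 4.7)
The plan is to treat each of the five cases uniformly by invoking the character Hopf algebra framework of Theorem \ref{PropIdealCharHopfAlg} and the PBW technology from Section \ref{SectCharHA}. For each braiding in (1)--(5), I would first read off from the claimed basis a candidate Shirshov closed set $L \subset \Ly$ of Lyndon words whose associated super letters $[L]$ are the PBW generators (e.g.\ $L = \{x_1,[x_1x_1x_2],[x_1x_2],[x_1x_2x_2],x_2\}$ in case (1); and analogously in (2)--(5) including the longer Lyndon words $x_1x_1x_2x_1x_2$, $x_1x_1x_2x_1x_2x_1x_2$ and $x_1x_1x_1x_1x_2$ that appear in the basis). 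The fact that $[L]$ forms a PBW basis and that $L$ is Shirshov closed is verified case by case directly on the underlying Lyndon words, so $C(L)$ and $D(L)$ can be computed explicitly.

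Next I would compute, for each $w \in C(L)$ and each $u \in D(L)$, the element $[w]-\red{w}$ resp.\ $[u]^{N_u}-\redh{u}$ that represents the corresponding defining relation of $\BV$. The existence and shape of $\red{w}$ and $\redh{u}$ comes out of the repeated application of Proposition \ref{PropqCommut}: the $q$-Jacobi identity expands nested brackets like $[x_1x_1x_2x_2]$ in terms of $[x_1x_2]^2$ and products involving $x_1^2$ or $x_2^2$, and the restricted $q$-Leibniz formulas reduce iterated brackets of the form $[x_1,x_j^{N_j}]$ to zero when the appropriate $q$-binomials vanish. For the quadratic relation in (1) the scalar $\frac{1}{2}q_{11}q_{12}(q_{12}q_{21}-q_{11})(1-q_{12}q_{21})$ is pinned down by demanding that $[x_1x_1x_2x_2]-c\,[x_1x_2]^2$ be a primitive element in $T(V)$ with respect to the braided coproduct, a computation that uses the coproduct identity from Section \ref{SectCoproducts}.

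To confirm that the listed relations suffice, I would compare against the (possibly larger) presentations of these same Nichols algebras in \cite{HeckBV1} and argue, via the redundancy-detection procedure of \cite{Helbig-PBW}, that every additional relation appearing there lies in the ideal generated by our shorter list. Concretely, many apparently independent relations such as $[x_1x_1x_2]$-type Serre relations or intermediate brackets collapse modulo $x_1^{N_1}$, $x_2^{N_2}$ and the distinguished quadratic/quintic relation, thanks to the $q$-derivation formulas. Once the ideal is matched, the basis and dimension statements are automatic from the PBW theorem applied to $[L]$, since the PBW monomial set and its cardinality are determined by the exponents $N_u$ attached to each $u\in L$.

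The principal obstacle will be the combinatorial bookkeeping for the long super letters: proving that the higher-degree relations $[x_1x_1x_2x_1x_2]$, $[x_1x_1x_2x_1x_2x_1x_2]$ and $[x_1x_1x_1x_1x_2]$ really do generate (together with the short ones) the full relation ideal requires delicate iterated bracket identities, and expanding these by hand is error-prone; as in the proof of Theorem \ref{ExLiftingsB2} I would expect to fall back on a computer-algebra check (e.g.\ FELIX) to confirm the reductions. A secondary subtlety is that the parameters $N_u$ for the long super letters $u$ are not simply $\ord q_{u,u}$ but involve root-of-unity orders such as $\ord\zeta=12$, so care is needed to verify in each of the five cases that the exponents in the basis match those predicted by the braiding matrix.
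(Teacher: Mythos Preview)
Your proposal is essentially correct and follows the same route the paper takes: the paper does not give an independent proof of this proposition but records that the presentations come from \cite{HeckBV1} and that the redundant relations are eliminated via the procedure of \cite{Helbig-PBW}, with the PBW basis and dimension then following; you have simply spelled out in more detail what those two citations accomplish (identifying the Shirshov closed $L$, computing $C(L)$ and $D(L)$, and checking redundancy via the $q$-commutator identities of Proposition~\ref{PropqCommut}). One minor caution: in your last paragraph you suggest that the exponents $N_u$ might not be $\ord q_{u,u}$, but in the Nichols algebra setting of this proposition they are exactly $\ord q_{u,u}$ (the more delicate exponent issues only arise in the liftings, not here), so that subtlety does not actually occur.
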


\begin{rem}
The Nichols algebras of Proposition \ref{PropNicholsAlgRows89} have different PBW bases, also if they are in the same Weyl equivalence class. They build up the tree types $T_4,$ $T_5$ and $T_7$ of \cite{HeckBV1}.
\end{rem}

\begin{thm}[Liftings of $\BV$ of rows 8 and 9]\label{ThmLiftRow8} For any lifting $A$ of $\BV$ as in Proposition \ref{PropNicholsAlgRows89}, we have
$$
A\cong (T(V)\# \k[\G])/ I,
$$ 
where $I$ is 
specified as follows: Let $\zeta\in\k^\times$, $\ord\zeta =12$.  
\begin{enumerate}
 \item[\emph{(1)}] 
 \Dchaintwo{}{$-\zeta ^{-2}$}{$-\zeta ^3$}{$-\zeta ^2$}. Let $q_{11}=-\zeta^{-2}$, 
$
q_{12}q_{21}=-\zeta^{3}
$, $q_{22}=-\zeta^{2}$.\\
\emph{(a)} If $q_{12}^3\neq 1$, then $I$ is generated by
\begin{align*}
[x_1x_1x_2x_2] & - \frac{1}{2}
q_{11}q_{12}(q_{12}q_{21}-q_{11})(1-q_{12}q_{21})[x_1x_2]^2, \\
 x_1^{3}&-\mu_1(1-g_1^3),\\
 x_2^{3}&.
\end{align*}
\emph{(b)} If $q_{12}^3= 1$, 
then $I$ is generated by
\begin{align*}
[x_1x_1x_2x_2] & - \frac{1}{2}
q_{11}q_{12}(q_{12}q_{21}-q_{11})(1-q_{12}q_{21})
[x_1x_2]^2, \\
 x_1^{3}&,\\
 x_2^{3}&-\mu_2(1-g_2^3).
\end{align*}
In \emph{(a),(b)}  $\operatorname{dim}_\k A=144\cdot |\G|$ and a  basis is 
\begin{multline*}
 \bigl\{x_2^{r_2}[x_1x_2x_2]^{r_{122} }[x_1x_2]^{r_{12} }[x_1x_1x_2]^{r_{112}} x_1^{r_1}g\ |\  0\le r_1,r_{2}< 3,\\ 
0 \le r_{112},r_{122}< 2,\ 0\le r_{12}<4,\, g\in\G \bigr\}.
\end{multline*}

\item[\emph{(2)}]  \Dchaintwo{}{$-\zeta ^{-2}$}{$\zeta ^{-1}$}{$-1$}, \Dchaintwo{}{$-\zeta ^2$}{$-\zeta $}{$-1$}. Let $q_{11}=-\zeta^{2}$, $q_{12}q_{21}=-\zeta$,  $q_{22}=-1$, or 
   $q_{11}=-\zeta^{-2}$, $q_{12}q_{21}=\zeta^{-1}$,  $q_{22}=-1$.\\
\emph{(a)} If $q_{12}\neq \pm 1$, then $I$ is generated by
\begin{align*}
[x_1x_1x_2x_1x_2x_1x_2]  , \qquad\qquad
 x_1^{3}&-\mu_1(1-g_1^3),\\
 x_2^{2}&.
\end{align*}
\emph{(b)} If $q_{12}= \pm 1$, 
then $I$ is generated by
\begin{align*}
[x_1x_1x_2x_1x_2x_1x_2] & +\mu_2q_{12}(q_{11}q_{12}q_{21}+q_{12}q_{21}-1)[x_1x_1x_2]x_1^2 g_2^2, \\
 x_1^{3}&,\\
 x_2^{2}&-\mu_2(1-g_2^2).
\end{align*}
In \emph{(a),(b)} $\operatorname{dim}_\k A=144\cdot |\G|$ and a basis is 
\begin{multline*}
\bigl\{x_2^{r_2}[x_1x_2]^{r_{12} }[x_1x_1x_2x_1x_2]^{r_{11212}}[x_1x_1x_2]^{r_{112}} x_1^{r_1}g\ |\  0\le r_1,r_{112}< 3,\\ 0 \le r_{2},r_{11212}< 2,\,0\le r_{12}<4,\, g\in\G\bigr\}.
\end{multline*}

\item[\emph{(3)}] \Dchaintwo{}{$-\zeta ^3$}{$\zeta $}{$-1$}, 
\Dchaintwo{}{$-\zeta ^3$}{$-\zeta ^{-1}$}{$-1$}. Let  $q_{11}=-\zeta^{3}$, $q_{12}q_{21}=\zeta$,  $q_{22}=-1$, or   $q_{11}=-\zeta^{3}$, $q_{12}q_{21}=-\zeta^{-1}$,  $q_{22}=-1$ .\\
 \emph{(a)} If $q_{12}\neq \pm 1$, then $I$ is generated by
\begin{align*}
[x_1x_1x_2x_1x_2]  , \qquad\qquad
 x_1^{4}&-\mu_1(1-g_1^3),\\
 x_2^{2}&.
\end{align*}
 \emph{(b)} If $q_{12}= \pm 1$, then $I$ is generated by 
\begin{align*}
[x_1x_1x_2x_1x_2] & -\mu_2 q_{12}(q_{11} + 2q_{12}^2q_{21}^2 - q_{12}q_{21})   x_1^3 g_2^2, \\
 x_1^{4}&,\\
 x_2^{2}&-\mu_2(1-g_2^2).
\end{align*}
In \emph{(a),(b)} $\operatorname{dim}_\k A=144\cdot |\G|$ and a basis is 
\begin{multline*}
 \bigl\{x_2^{r_2}[x_1x_2]^{r_{12} }[x_1x_1x_2]^{r_{112}}[x_1x_1x_1x_2]^{r_{1112}} x_1^{r_1}g\ |\  0\le r_1< 4,\\
 0\le r_{12},r_{112}<3,\,  0 \le r_{2},r_{1112}< 2,\, g\in\G\}.
\end{multline*}

\item[\emph{(4)}]  \Dchaintwo{}{$-\zeta ^2$}{$\zeta ^3$}{$-1$}.  Let $q_{11}=-\zeta^{2}$, $q_{12}q_{21}=\zeta^3$,  $q_{22}=-1$.\\
\emph{(a)} If $q_{12}\neq \pm 1$, then $I$ is generated by
\begin{align*}
[x_1x_1x_2x_1x_2x_1x_2]  , \qquad\qquad
 x_1^{3}&-\mu_1(1-g_1^3),\\
[x_1x_2]^{12}&-\mu_{12}(1-g_{12}^{12}),\\
 x_2^{2}&.
\end{align*}
It is $\operatorname{dim}_\k A=432\cdot |\G|$ and a basis is 
\begin{multline*}
\bigl\{x_2^{r_2}[x_1x_2]^{r_{12} }[x_1x_1x_2x_1x_2]^{r_{11212}}[x_1x_1x_2]^{r_{112}} x_1^{r_1}g\ |\  0\le r_1,r_{112}< 3,\\ 0 \le r_{2},r_{11212}< 2,\,0\le r_{12}<12,\, g\in\G\bigr\}
\end{multline*}
\emph{(b) (incomplete)} $q_{12}= \pm 1$, then $I$ is generated by
\begin{align*}
[x_1x_1x_2x_1x_2x_1x_2] & + q_{12}2\mu_2(q_{12}q_{21}+1)[x_1x_1x_2]x_1^2 g_2^2, \\
 x_1^{3}&,\\
[x_1x_2]^{12}&-\redh{12},\\
 x_2^{2}&-\mu_2(1-g_1^2).
\end{align*}

\item[\emph{(5)}] \Dchaintwo{}{$-\zeta ^{-1}$}{$-\zeta ^3$}{$-1$}. Let $q_{11}=-\zeta^{-1}$, $q_{12}q_{21}=-\zeta^3$,  $q_{22}=-1$.\\
\emph{(a)} If $q_{12}\neq \pm 1$, then $I$ is generated by
\begin{align*}
[x_1x_1x_1x_1x_2]  , \qquad\qquad x_1^{12}&-\mu_1(1-g_1^{12}),\\
[x_1x_1x_2x_1x_2]  , \qquad\qquad \ x_2^{2}&.
\end{align*}
\emph{(b)} If $q_{12}= \pm 1$, 
then $I$ is generated by:
\begin{align*}
[x_1x_1x_1x_1x_2]  , \qquad\qquad\qquad\qquad\quad\ x_1^{12}&-\mu_1(1-g_1^{12}),\\
[x_1x_1x_2x_1x_2]  + 2 \mu_2 q_{12}  x_1^3 g_2^2 , \qquad\qquad
 x_2^{2}&-\mu_2(1-g_2^2).
\end{align*}\\
In  \emph{(a),(b)} $\operatorname{dim}_\k A=432\cdot |\G|$ and a  basis is 
\begin{multline*}
 \bigl\{x_2^{r_2}[x_1x_2]^{r_{12} }[x_1x_1x_2]^{r_{112}}[x_1x_1x_1x_2]^{r_{1112}} x_1^{r_1}g\ |\  0\le r_1< 12,\\
 0\le r_{12},r_{112}<3,\,  0 \le r_{2},r_{1112}< 2,\, g\in\G\bigr\}.
\end{multline*}
\end{enumerate}
\end{thm}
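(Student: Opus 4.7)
The plan is to follow the same four-step template used to prove Theorems \ref{ExLiftingsA2} and \ref{ExLiftingsB2}. For each of the five diagrams in Proposition \ref{PropNicholsAlgRows89}, I would: (i) show that the ideal $I$ on the right-hand side is a Hopf ideal of $T(V)\#\k[\G]$; (ii) invoke \cite{Helbig-PBW} for the claimed basis and the dimension of $(T(V)\#\k[\G])/I$; (iii) deduce that this quotient is pointed with coradical $\k[\G]$ and with associated graded Hopf algebra $\BV\#\k[\G]$; and (iv) show, conversely, that every lifting $A$ arises in this form by verifying that the defining relations of $I$ hold in $A$.

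Step (i) is the technical core. The root relations $x_i^{N_i}-\mu_i(1-g_i^{N_i})$ are skew-primitive by Lemma \ref{LemSkewPrimRootSerre}(1). The quantum Serre-type generator $[x_1x_1x_2x_2]-\tfrac{1}{2}q_{11}q_{12}(q_{12}q_{21}-q_{11})(1-q_{12}q_{21})[x_1x_2]^2$ of case (1), the generator $[x_1x_1x_1x_1x_2]$ of case (5), and the short iterated commutators in (2a), (3a), (4a), (5a), can be handled by combining Lemma \ref{LemSkewPrimRootSerre}(2)--(3) with the restricted $q$-Leibniz formula of Proposition \ref{PropqCommut}. The five-letter commutator $[x_1x_1x_2x_1x_2]$ of (3) and the seven-letter commutator $[x_1x_1x_2x_1x_2x_1x_2]$ of (2) and (4) are treated via Lemma \ref{LemSkew11212}: modulo the relation $x_2^2-\mu_2(1-g_2^2)$ it yields the leading terms of the coproduct, after which one checks that the explicit $\mu_2$-counterterm displayed in the theorem cancels exactly the remaining non-skew-primitive piece. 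The coefficient of this counterterm is precisely what makes the resulting element skew-primitive, and the $\lambda$-term $\lambda_w(1-g_w)$ is then absorbed as usual.

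Step (ii) is immediate by quoting \cite{Helbig-PBW}. Step (iii) follows as in the earlier theorems: by \cite[Lem.~5.5.1]{Mont} combined with the filtration argument, the generators of $I$, which all take the form $[w]-\red{w}$ or $[u]^{N_u}-\redh{u}$ with $\red{w},\redh{u}$ of strictly lower filtration degree, reduce in $\gr$ to the defining relations of $\BV$; dimension-counting via (ii) then upgrades the resulting surjection $\BV\#\k[\G]\twoheadrightarrow \gr((T(V)\#\k[\G])/I)$ to an isomorphism. For Step (iv) one builds the surjective Hopf algebra map $T(V)\#\k[\G]\to A$, $x_i\mapsto a_i$, $g\mapsto g$, and verifies that each generator of $I$ lies in the kernel: the short relations are covered by Lemma \ref{LemLiftofGeneralRel}, while the long commutators, once established to be skew-primitive modulo previously verified relations, reduce by Lemma \ref{LemQijCombinatorics2} and Eq.~\eqref{LemTaftWilson} to pure combinatorics on the braiding matrix $(q_{ij})$, which dictates in each of the five diagrams whether the corresponding skew-primitive element lies in $\k(1-g_w)$ (contributing an arbitrary scalar $\lambda_w$) or must vanish.

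The main obstacle is Step (i) for the long commutators with non-trivial counterterms in cases (1), (2b), (3b), (4b), (5b). These identities are not formal consequences of Lemmas \ref{LemSkewPrimRootSerre}--\ref{LemSkew11212}: verifying them requires a direct expansion of $\Delta$ in $T(V)\#\k[\G]$ modulo the already-established smaller relations, and the precise shape of each counterterm (for example the $[x_1x_1x_2]x_1^2 g_2^2$ piece in (2b) and (4b), or the $x_1^3g_2^2$ piece in (3b) and (5b)) can only be found by such an expansion. As in the proof of Theorem \ref{ExLiftingsB2}, this is most efficiently carried out with a computer algebra system such as FELIX \cite{felix}; once those identities are in hand, the rest of the argument is essentially bookkeeping parallel to the earlier sections.
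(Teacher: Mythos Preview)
Your proposal is correct and follows essentially the same four-step template as the paper's proof. Two minor discrepancies worth noting: the paper does \emph{not} invoke FELIX for this theorem---it claims that Lemma~\ref{LemSkew11212} (together with the explicit coproduct formula preceding it) already handles both the five-letter and the seven-letter commutators, and that the $[x_1x_1x_2x_2]$ relation in~(1) is skew-primitive by a direct calculation rather than via Lemma~\ref{LemSkewPrimRootSerre}; and you omit Lemma~\ref{LemSkewZ}(1), which the paper uses for the $[x_1x_2]^{12}$ relation in~(4a).
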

\begin{proof}
We argue exactly as in the proofs of Theorem \ref{ExLiftingsA2} and \ref{ExLiftingsB2}.

\noindent \textbullet \ $(T(V)\# \k[\G])/ I$ is a Hopf algebra, since $I$ is generated by skew-primitive elements:
 The elements $x_i^{N_i}-\mu_i(1-g_i^{N_i})$ and $[x_1x_1x_1x_1x_2]-\lambda_{11112}(1-g_{11112})$ are skew-primitive if $q_{12}q_{21}=q_{11}^{-2}$ by Lemma \ref{LemSkewPrimRootSerre}. For the elements $[x_1x_1x_2x_1x_2]-\red{11212}$ and $[x_1x_1x_2x_1x_2x_1x_2]-\red{1121212}$ we use Lemma \ref{LemSkew11212} and for   $[x_1x_2]^{N_{12}}-\redh{12}$ Lemma \ref{LemSkewZ}(1). Further in (1) $[x_1x_1x_2x_2]  - \frac{1}{2}
q_{11}q_{12}(q_{12}q_{21}-q_{11})(1-q_{12}q_{21})[x_1x_2]^2$ is skew-primitive by a straightforward calculation.  

\noindent \textbullet \ The statement on the basis and dimension of $(T(V)\# \k[\G])/ I$ is proven in \cite{Helbig-PBW}.

\noindent \textbullet \ $(T(V)\# \k[\G])/ I$ is pointed and $\operatorname{gr}((T(V)\# \k[\G])/ I)\cong \BV\#\k[\G]$ by the same arguments as in the proofs of Theorems \ref{ExLiftingsA2} and \ref{ExLiftingsB2}.

\noindent \textbullet \ Also in the same way, the surjective Hopf algebra map 
$
T(V)\#\k[\G]\rightarrow A
$ factorizes to an isomorphism
$$
(T(V)\#\k[\G])/I\stackrel{\sim}{\rightarrow} A
$$
by Lemma \ref{LemLiftofGeneralRel} and \ref{LemQijCombinatorics2}, doing the combinatorics on the braiding matrices.
\end{proof}

\begin{rem}
The Conjecture \ref{ConjSkewPrim} is true in the above cases. Further note that in (1) $r_{1122}\notin \k[\G]$ (as well as $r_{1122}\neq 0$ in $\BV$), in (2b) $r_{1121212}\notin \k[\G]$, in (3b) $r_{11212}\notin \k[\G]$, in (4b) $r_{1121212}\notin \k[\G]$ and in (5b) $r_{11212}\notin \k[\G]$.
\end{rem}

\bibliographystyle{plain}
\bibliography{mybib}
\end{document}